\title[Exponential convergence to equilibrium for coupled systems]{Exponential convergence to equilibrium for coupled systems of nonlinear degenerate drift diffusion equations}
\author{Lisa Beck}
\address{Lisa Beck \\ Institut für Mathematik \\ Universit\"at Augsburg \\ Universit\"atsstraße 14 \\ D-86159 Augsburg \\ Germany}
\email{lisa.beck@math.uni-augsburg.de}
\author{Daniel Matthes}
\address{Daniel Matthes \\ Zentrum Mathematik/M8 \\ Technische Universit\"at M\"unchen \\ Boltzmannstra\ss e 3 \\ D-80538 Garching \\ Germany}
\email{matthes@ma.tum.de}
\thanks{This research was supported by the DFG Collaborative Research Center TRR 109, ``Discretization in Geometry and Dynamics''.}
\author{Martina Zizza}
\address{Martina Zizza \\ SISSA-ISAS \\ Via Bonomea, 265 \\ 34136 Trieste TS \\ Italy}
\email{mzizza@sissa.it}
\newcommand{\N}{\mathbb{N}}
\newcommand{\R}{\mathbb{R}}
\newcommand{\Rp}{\mathbb{R}_{>0}}
\newcommand{\Rnn}{\mathbb{R}_{\ge0}}
\newcommand{\ball}{\mathbb{B}}
\newcommand{\eins}{\mathbf{1}}
\newcommand{\intrd}{\int_{\mathbb{R}^d}}
\newcommand{\dn}{\mathrm{d}}
\newcommand{\dd}{\,\mathrm{d}}
\newcommand{\dff}{\mathrm{D}}
\newcommand{\eps}{\varepsilon}
\newcommand{\dv}{\operatorname{div}}
\newcommand{\hausdorff}{{\mathcal H}}
\newcommand{\ee}{\mathbf{e}}
\newcommand{\themap}{\Gamma}
\newcommand{\leb}{{\mathcal L^d}}
\newcommand{\knl}{\mathcal K}
\newcommand{\opt}{\text{opt}}
\newcommand{\prbtwo}{\mathcal{P}_2^r(\mathbb{R}^d)}
\newcommand{\prbtwotwo}{[\mathcal{P}_2^r(\mathbb{R}^d)]^2}
\newcommand{\wass}{\mathbf{W}_2}
\newcommand{\dwass}{\widetilde{\mathbf{W}}_2}
\newcommand{\auxe}{\mathcal{E}}
\newcommand{\spt}[1]{\Omega^{#1}_\eps}
\newcommand{\ent}{\mathbf{H}}
\newcommand{\entt}{\widetilde{\mathbf{H}}}
\newcommand{\dss}{\mathbf{D}}
\newcommand{\nrg}{\mathbf{E}}
\newcommand{\lyp}{\mathbf{L}}
\newcommand{\fnc}{\mathbf{F}}
\newcommand{\evi}{\textup{(}EVI${}_0$\textup{)}}
\newtheorem{theorem}{Theorem}[section]
\newtheorem{proposition}[theorem]{Proposition}
\newtheorem{lemma}[theorem]{Lemma}
\newtheorem{corollary}[theorem]{Corollary}
\newtheorem{remark}[theorem]{Remark}
\newtheorem{example}[theorem]{Example}
\newtheorem{definition}[theorem]{Definition}
\newcommand{\stkout}[1]{\ifmmode\text{\sout{\ensuremath{#1}}}\else\sout{#1}\fi}
\numberwithin{equation}{section}
\begin{document}

\begin{abstract}
  We study the existence and long-time asymptotics of weak solutions
  to a system of two nonlinear drift-diffusion equations
  that has a gradient flow structure in the Wasserstein distance.
  The two equations are coupled through a cross-diffusion term
  that is scaled by a parameter $\eps\ge0$.
  The nonlinearities and potentials are chosen such that in the decoupled system for $\eps=0$,
  the evolution is metrically contractive, with a global rate $\Lambda>0$.
  The coupling is a singular perturbation in the sense that for any $\eps>0$,
  contractivity of the system is lost.
  
  Our main result is that for all sufficiently small $\eps>0$,
  the global attraction to a unique steady state persists, 
  with an exponential rate $\Lambda_\eps=\Lambda-K\eps$.
  The proof combines results from the theory of metric gradient flows
  with further variational methods and functional inequalities.
\end{abstract}

\maketitle

\section{Introduction}

In this paper, we analyze existence and long-time asymptotics of non-negative unit-mass solutions~$u$ and~$v$
of the following coupled system of two degenerate nonlinear drift-diffusion equations on $\R^d$:
\begin{equation}
  \label{eq:eq}
  \begin{split}
    \partial_tu &= \dv\big(u\,\nabla[F'(u) +\eps\partial_uh(u,v)+\Phi]\big),  \\
    \partial_tv &= \dv\big(v\,\nabla[G'(v) +\eps\partial_vh(u,v) + \Psi]\big).
  \end{split}
\end{equation}
Notice that the diffusive contributions $\dv ( u  \nabla F'(u))$ and $\dv ( v \nabla G'(v)) $ of the system~\eqref{eq:eq} can also be expressed as $\Delta f(u)$ and $\Delta g(v)$, respectively, by introducing functions $f$ and $g$ via the relations $f'(r)=rF''(r)$ and $g'(r)=rG''(r)$ for $r >0$.
The precise hypotheses on the various functions are formulated in Section~\ref{sct:hypo} below;
in brief: the nonlinearities $F$, $G$ for the individual components
are smooth convex functions that degenerate at zero, i.e., with $F'(0)=G'(0)=0$;
the coupling is moderated by a nonlinear function $h$ that is regular and bounded;
the coupling strength $\eps>0$ will be assumed small;
and the potentials $\Phi$, $\Psi$ are $\Lambda$-convex, with some $\Lambda>0$.
We prove the global existence of transient solutions $(u_\eps(t),v_\eps(t))_{t\ge0}$ to~\eqref{eq:eq} for initial data of finite energy;
we show existence and uniqueness of a stationary solution $(\bar u_\eps,\bar v_\eps)$, and analyze its regularity;
finally, we obtain convergence of $(u_\eps(t),v_\eps(t))$ to $(\bar u_\eps,\bar v_\eps)$ in $L^1(\R^d)$ as $t\to\infty$, at an exponential rate.

Our approach is a variational one:
we consider --- at least formally  --- the system~\eqref{eq:eq} as a metric gradient flow of the energy functional
\begin{align}
  \label{eq:nrg}
  \nrg_\eps(u,v) = \intrd \big[F(u)+G(v) + u\Phi + v\Psi + \eps h(u,v) \big]\dd x
\end{align}
on the cross product of two copies of the space $\prbtwo$ of probability densities of finite second moment,
endowed with the $L^2$-Wasserstein distance;
the definitions are recalled in Section~\ref{sct:prelims} below.

What we consider as our main contribution
--- going beyond the analysis of the specific system~\eqref{eq:eq} ---
is a blending of the abstract machinery of uniformly contractive metric gradient flows
with more hands-on variational methods by which we extend the relevant estimates into the totally non-convex regime.
Indeed, under the hypotheses stated below,
the two evolution equations of the \emph{decoupled system}, i.e.,~\eqref{eq:eq} for $\eps=0$,
constitute $\Lambda$-contractive gradient flows in the $L^2$-Wasserstein metric.
Stated differently, $\nrg_\eps$ is $\Lambda$-uniformly convex along geodesics.
A consequence from the general theory --- see e.g.~\cite[Chapter 11]{AGS} --- is the convergence of both components $u(t)$ and $v(t)$
to respective stationary solutions $\bar u_0$ and $\bar v_0$ in $L^1(\R^d)$ at exponential rate $\Lambda$. 

It is a remarkable fact that, in the eyes of the Wasserstein metric,
the coupling via $h$ in~\eqref{eq:eq} is a singular perturbation of the two decoupled equations,
no matter how tame $h$ is.
As soon as $\eps>0$, geodesic semi-convexity of the energy $\nrg_\eps$ is lost entirely, i.e.,
$\nrg_\eps$ is \emph{not} $\lambda$-uniformly convex along geodesics, not for any $\lambda\in\R$,
see Proposition~\ref{prp:nonconvex}.
In that case, the gradient flow machinery does not provide any information on the long-time behaviour of solutions to~\eqref{eq:eq}.
Rather, it implies that the coupled systems is \emph{not} globally uniformly semi-contractive anymore,
that is, there is no Lipschitz bound on the instantaneous divergence of trajectories.
Of course, this does not rule out global exponential convergence to equilibrium, but the latter behaviour is inaccessible to that theory.

To prove equilibration at exponential rate, we build on associated functional inequalities, see e.g.~\cite{FuncIneq},
which can be obtained as a consequence of metric contractivity, but which are more ``structurally robust'' than the latter.
Specifically, we obtain in Lemma \ref{lem:lypconvex} that 
\begin{equation}
  \label{eq:thisone}
  \begin{split}
    2(\Lambda-K_0\eps)&\intrd\big[F(u) - F(\bar u_\eps) + (u-\bar u_\eps)\big(\Phi + \eps \partial_u h(\bar u_\eps,\bar v_\eps)\big) \big]\dd x \\
    &\le \intrd u\big|\nabla\big[F'(u)+\Phi+\eps\partial_uh(\bar u_\eps,\bar v_\eps)\big]\big|^2\dd x    
  \end{split}
\end{equation}
(and a similar inequality for $v$ in place of $u$),
which expresses the relation between energy and dissipation
for the auxiliary scalar gradient flow $\partial_tu=\dv(u\nabla[F'(u) + \Phi+\eps\partial_uh(\bar u_\eps,\bar v_\eps)])$.
The constant $\Lambda-K_0\eps$ in~\eqref{eq:thisone} is a lower bound
on the modulus of convexity of $\Phi+\partial_uh(\bar u_\eps,\bar v_\eps)$,
which is controlled thanks to our $C^2$-estimates on the steady state in Theorem~\ref{thm:stationary}.
With~\eqref{eq:thisone} at hand, 
the key step in our approach, see Proposition~\ref{prp:core}, amounts to proving that
the negative time-derivative of the integral on the left-hand side of~\eqref{eq:thisone}
bounds the expression on the right-hand side, up to a factor $1-K_1\eps$.
This eventually implies convergence to equilibrium in $L^1(\R^d)$
at a global exponential rate of at least $\Lambda_\eps \coloneqq \Lambda-K\eps$, with $K \coloneqq K_0+\Lambda K_1$.

We emphasize that the novelty of our result does not lie in the proof of convergence to equilibrium as such
--- a \emph{qualitative} result could be obtained at almost no cost e.g. from the LaSalle principle ---
but in the \emph{quantitative} estimate on the convergence to equilibrium,
with the exponential rate~$\Lambda_\eps$ that is an $\eps$-perturbation of the decoupled rate~$\Lambda$.
It is further significant that~\eqref{eq:eq} is considered on~$\R^d$,
and that the steady state $(\bar u_\eps,\bar v_\eps)$ is compactly supported;
hence, there is no standard inequality like Poincar\'{e} or log-Sobolev to conclude exponential convergence,
not even at \emph{some} rate smaller than~$\Lambda_\eps$,
but a more adapted inequality like~\eqref{eq:thisone} appears necessary. 

\subsection{Positioning of our results}
Coupled systems of nonlinear drift-diffusion equations are ubiquitous.
They are used in the modeling of chemical reactions~\cite{Chemistry}, 
flows in porous media~\cite{Flows}, semi-conductor devices~\cite{Semiconductors}
population dynamics~\cite{Population}, rival gangs in a city~\cite{Gangs},
just to name a few of the countless applications. 
The literature concerning the very natural question about long-time asymptotics is huge,
albeit mostly focused on such systems
with a particular rigid algebraic structure of the diffusion (being diagonal, or even linear)
but with additional source terms, describing e.g. reactions.

We briefly recall the situation for \emph{scalar} drift-diffusion equations.
The first proof of exponential convergence to equilibrium in a degenerate parabolic equations
of the type $\partial_tu=\Delta f(u)+\dv(u\nabla\Phi)$ has been given 
in the case $f(u)=u^m$ and $\Phi(x)=\frac12|x|^2$ on~$\R^d$
by three different methods:
by a nonlinear extension of the Bakry--Emery method~\cite{CarTos},
by a variational proof of the entropy-dissipation inequality~\cite{Dolbeault},
and by virtue of gradient flows in the $L^2$-Wasserstein metric~\cite{Otto}.
These methods have been extended later on to more general~$f$'s and~$\Phi$'s,
and also to bounded domains $\Omega\subset\R^d$,
see e.g.~\cite{Monatshefte} and~\cite{FuncIneq} for such generalizations of the first and of the third approach, respectively.
The common fundamental result is that if~$f$ has certain convexity properties,
and if~$\Phi$ is uniformly convex of modulus~$\Lambda>0$,
then solutions~$u$ converge to the unique equilibrium in~$L^1$ at exponential rate~$\Lambda$. 

There appears to be no result of comparable simplicity and generality for \emph{coupled systems} of parabolic equations.
All of the aforementioned methods of proof break down as soon as multi-component densities are considered,
except in some particular systems with a very special algebraic structure, see e.g.~\cite{Liero,Zinsl-metric}.
System~\eqref{eq:eq} is a paradigmatic illustration of this effect on the gradient flow approach,
where geodesic convexity turns out to be highly fragile with respect to the coupling of components:
the decoupled functional~$\nrg_0$ from~\eqref{eq:nrg} is $\Lambda$-convex along geodesics,
but~$\nrg_\eps$ completely loses convexity for any $\eps>0$, see Proposition~\ref{prp:nonconvex} below.
For the sake of completeness,
we mention that although the gradient flow approach loses its quantitative estimates,
various qualitative results, e.g.,
on the shape of steady states or on the convergence to equilibrium (without rates),
have been obtained along these lines for specific systems, see for instance~\cite{Matioc}.

Limited generalizations of the scalar theory
have been developed for reaction-diffusion systems, and recently also for cross-diffusion systems (with or without reactions).
Although many of these systems still bear a gradient flow structure~\cite{Alex-GF},
the more robust energy method has proven better adapted to study long-time asymptotics.
In reaction-diffusion systems, the substantial challenge is in the control of the growth induced by the reactions,
while the diffusion itself is typically decoupled, and frequently just linear.
Prototypical results on exponential equilibration have been obtained in \cite{DF-1,Markowich, Mielke-1,Mielke-2}
for systems with linear diffusion,
and in~\cite{Bao-1} for component-wise non-linear diffusion.
In (reaction-)cross-diffusion systems, the diffusion matrix is non-diagonal,
but usually subject to restrictive structural conditions.
Recent results on exponential convergence to equilibrium have been obtained, e.g.,
for systems with volume filling~\cite{Zamponi},
of Maxwell--Stefan type~\cite{Ansgar-1},
or with SKT-structure~\cite{Bao-2}.

None of the above results covers the exponential equilibration presented in Theorem~\ref{thm:longtime} below,
i.e., for a system which is fully nonlinear with a general (albeit small) coupling.
Note in particular that one of the main challenges in the analysis of~\eqref{eq:eq}
is that the steady state is compactly supported,
which rules out the use of the standard log-Sobolev
--- that is a key element in the proofs of essentially all of the aforementioned results ---
but requires more adapted functional inequalities like~\eqref{eq:thisone}.
A recent result that is closer in spirit to our approach to proving Theorem~\ref{thm:longtime}
has been obtained in~\cite{Mary}:
the authors treat a system with a small nonlinear coupling like~\eqref{eq:eq};
however, there, linearity of~$F$ and~$G$, a bounded spatial domain, and an a priori $L^\infty$-bound are assumed,
which allows to perform the estimates in a much simpler way, using Poincar\'{e}'s inequality.

For completeness, we finally mention two results~\cite{Zinsl-KS,Zinsl-NP},
where a similar strategy has been applied to the parabolic-parabolic Keller--Segel and the Nernst--Planck system,
respectively.
There, the coupling is between a Wasserstein and an $L^2$-gradient flow,
unlike the coupling between two Wasserstein gradient flows here.
A variant of the above has been explored in~\cite{zizza}.
The paper at hand uses elements from~\cite{Zinsl-KS,zizza}.

We briefly comment on the positioning of our existence result, Theorem~\ref{thm:transient}.
We use the celebrated JKO scheme~\cite{JKO} to obtain solutions by a variational time-discrete approximation.
This scheme has been used for proving existence of various non-linear parabolic equations
like doubly degenerate parabolic PDEs~\cite{doubly}, including the $p$-Laplace equation~\cite{Agueh};
in nonlinear diffusion-aggregation equations~\cite{five}, including the parabolic-elliptic Keller--Segel model~\cite{Blanchet-1},
in fourth order quantum and thin film equations~\cite{GST,MMS,Liso};
and in many further instances.
Applications to coupled systems are numerous as well,
including for instance systems with non-local aggregation~\cite{MdF-1,MdF-2} and cross-diffusion~\cite{Schlake,Berendsen},
and also combinations of Wasserstein and $L^2$-gradient flows,
like the parabolic-parabolic Keller--Segel~\cite{Zinsl-KS,other-KS} or the Nernst--Planck system~\cite{other-NP}.
In several of these cases, the existence proof could have been obtained also by more elementary methods.
Also for~\eqref{eq:eq}, the \emph{boundedness-by-entropy method}~\cite{Ansgar-bbe},
albeit not directly applicable, would have paved an alternative way.
For us, the time-discrete approximation via a minimizing movement with respect to the $L^2$-Wasserstein distance
is crucial for making our main result, namely the long-time asymptotics, fully rigorous.
The work most closely related to our proof of Theorem~\ref{thm:transient} below is~\cite{MdF-2}:
there, the same variational method has been used to construct time-discrete approximations to a system similar to~\eqref{eq:eq},
augmented with additional non-local interaction terms.
The hypotheses of~\cite{MdF-2} are complementary to ours;
while our conditions are more restrictive on the coupling~$h$, we allow for more general~$F$ and~$G$.

\subsection{General hypotheses}
\label{sct:hypo}
The following hypotheses are assumed throughout the paper.
Several of the assumptions are made for convenience of the presentation, and are far from being necessary.
\begin{itemize}
\item \textbf{Potentials:} For $\Phi,\Psi\in C^\infty(\R^d)$, we assume:
  \begin{itemize}
  \item there are positive constants~$\Lambda$ and~$M$
    such that
    \begin{align}
      \label{hyp:Phi}
      \Lambda\eins \le \nabla^2\Phi \le M\eins,
      \quad
      \Lambda\eins \le \nabla^2\Psi \le M\eins;
    \end{align}
  \item $\Phi$ and~$\Psi$ vanish at their respective minima $\underline x_\Phi,\underline x_\Psi\in\R^d$,
    i.e.
    \begin{align}
      \label{hyp:Phi0}
      0=\inf_{\R^d} \Phi = \Phi\big(\underline x_\Phi\big),
      \quad
      0=\inf_{\R^d} \Psi = \Psi\big(\underline x_\Psi\big).
    \end{align}
  \end{itemize}
\item \textbf{Nonlinearities:} We assume $F,\,G\in C^\infty(\Rp)\cap C^1(\Rnn)$ such that:
  \begin{itemize}
  \item $F''(r)>0$ and $G''(r)>0$ for all $r>0$,
    and
    \begin{align}
    \label{hyp:quadratic_growth}
     \liminf_{r\to\infty}F''(r) > 0, \quad \liminf_{r\to\infty}G''(r)> 0;
    \end{align}
  \item they degenerate at zero to first order, i.e.,
    \begin{align}
      \label{eq:degeneracy}
      F(0)=G(0)=0, \quad F'(0)=G'(0)=0;
    \end{align}
  \item there are exponents $m,n\ge2$ such that
    \begin{align}
      \label{hyp:powerF}
      \lim_{r\downarrow 0}r^{-(m-2)}F''(r) \in (0,\infty),
      \quad
      \lim_{r\downarrow 0}r^{-(n-2)}G''(r) \in (0,\infty);
    \end{align}
  \item they satisfy the (infinite dimensional) McCann condition, i.e., for all $r>0$,
    \begin{align}
      \label{eq:mccann}
      rF'(r) \le F(r) + r^2F''(r), \quad rG'(r) \le G(r) + r^2G''(r);
    \end{align}
  \item they satisfy the doubling condition, i.e., there is a constant~$D$ such that for all $r,s>0$,
    \begin{align}
      \label{eq:doubling}
      F(r+s) \le D(1+F(r)+F(s)), \quad G(r+s) \le D(1+G(r)+G(s)).
    \end{align}
  \end{itemize}
\item \textbf{Coupling:} Concerning $h\in C^\infty(\Rp^2)\cap C^1(\Rnn^2)$, we assume that:
  \begin{itemize}
  \item $h$ vanishes to first order on $\partial\Rnn^2$,
    \begin{align}
      \label{hyp:hvanish}
      h = \partial_uh = \partial_vh \equiv0 \quad\text{on $\partial\Rnn^2$};
    \end{align}
  \item there is an $\eps^*>0$ such that
    \begin{align}
      \label{hyp:hconvex}
      (u,v) \mapsto  F(u)+G(v) + 2\eps^*h(u,v) \quad \text{is convex};
    \end{align}
  \item with the same~$\eps^*$, there holds, for all $u,v>0$,
    \begin{align}
      \label{hyp:hbound}
      2 \eps^* |h(u,v)| \leq F(u)+G(v) . 
    \end{align}
  \end{itemize}
  \item \textbf{Degeneracy, boundedness, and swap condition:}
  Define $\theta_u,\theta_v \colon \Rnn^2\to\R$ by
  \begin{align}
    \label{eq:deftheta}
    \theta_u(\rho,\eta) \coloneqq  \partial_uh\big((F')^{-1}(\rho),(G')^{-1}(\eta)\big),
    \quad
    \theta_u(\rho,\eta) \coloneqq  \partial_uh\big((F')^{-1}(\rho),(G')^{-1}(\eta)\big).
  \end{align}
  We say that the triple $(F,G,h)$ \ldots 
  \begin{itemize}
  \item \ldots satisfies the \emph{swap condition} if there is some constant~$W$
    such that, for all $u,v>0$,
    \begin{align}
      \label{hyp:swap}
      \partial_\eta\theta_u\big(F'(u),G'(v)\big)\le W\sqrt{v/u},
      \quad
      \partial_\rho\theta_v\big(F'(u),G'(v)\big)\le W\sqrt{u/v};
    \end{align}
  \item \ldots is \emph{$k$-bounded} for some $k\in\N$
    if $\theta_u,\theta_v\in C^k(\Rnn^2)$
    and all partial derivatives of order less or equal to~$k$ are bounded on~$\Rnn^2$;  
  \item \ldots is \emph{$k$-degenerate} for some $k\in\N$
    if $\theta_u,\theta_v\in C^k(\Rnn^2)$
    and all partial derivatives of order less or equal~$k$ vanish on $\partial\Rnn^2$.
  \end{itemize}
\end{itemize}
\begin{remark}
\label{remark_consequence_hypotheses}
\leavevmode
\begin{enumerate}[labelindent=\parindent,leftmargin=*,font=\normalfont, label=(\arabic{*}), ref=(\arabic{*})]
 \item Hypotheses~\eqref{hyp:Phi}\&\eqref{hyp:Phi0} imply that~$\Phi$ and~$\Psi$ are bounded from above and from below by parabolas:
  \begin{align}
    \label{eq:Phiupbound}
    \frac{\Lambda}2|x-\underline x_\Phi|^2 \le\Phi(x)\le \frac M2|x-\underline x_\Phi|^2,
    \quad
    \frac{\Lambda}2|x-\underline x_\Psi|^2 \le\Psi(x)\le\frac  M2|x-\underline x_\Psi|^2.   
  \end{align}
  These estimates are directly obtained by a Taylor expansion about the respective minima.
  Similarly, one bounds the norm of the gradients
  and thus obtains in combination with~\eqref{eq:Phiupbound} that
  \begin{align}
    \label{eq:Phigradbound}
    \frac{2\Lambda^2}{M}\Phi(x)\le|\nabla\Phi(x)|^2\le \frac{2M^2}{\Lambda}\Phi(x),
    \quad
    \frac{2\Lambda^2}{M}\Psi(x)\le|\nabla\Psi(x)|^2\le \frac{2M^2}{\Lambda}\Psi(x).
  \end{align}  
  \item A consequence of the hypotheses on~$F$ and~$G$ is that
  both are \emph{uniformly} convex on each interval of the form $[r,\infty)$ with $r>0$.
  Further, in combination with the doubling condition,
  it follows that for all $r>0$,
  \begin{align}
    \label{eq:tripling}
    rF'(r) \le D(1+2F(r)), \quad rG'(r) \le D(1+2G(r)).
  \end{align}
  Indeed, convexity implies $F(2r)\ge F(r)+rF'(r)$, and~\eqref{eq:tripling} now follows via~\eqref{eq:doubling} for $s=r$.
  \item If $(F,G,h)$ is $2$-bounded and $2$-degenerate, there exists a constant $A \geq 0$ such that
  \begin{equation}
  \label{eqn_def_A}
    \omega(\rho,\eta)\le A \min\{1,\rho,\eta\},
  \end{equation}
  where $\omega\colon\Rnn^2\to\R$ is any of functions $\theta_u$, $\theta_v$, $\partial_\rho\theta_u$, $\partial_\eta\theta_u$, $\partial_\rho\theta_v$, or $\partial_\eta\theta_v$.
\end{enumerate}  
\end{remark}

\begin{example}
  \label{xmp:hypos}
  Consider~$F$,~$G$ and~$h$ of the form
  \begin{align*}
    F(u) = \frac{u^m}{m},\ G(v) = \frac{v^n}{n}, \quad h(u,v) = u^pv^qe^{-\lambda(u+v)}
  \end{align*}
  with positive exponents $m,n,p,q$ and~$\lambda$. We claim that~$F$,~$G$ and~$h$ satisfy their respective hypotheses above and also the swap condition provided that
  \begin{align*}
    m,n\ge 2,\quad p\ge m,\,q\ge n.
  \end{align*}
  Moreover, given $k\in\N$, we claim that $(F,G,h)$ is $k$-bounded and $k$-degenerate, if additionally
  \begin{align}
    \label{eq:puremadness}
    \frac{p-1}{m-1}>k,\quad \frac{q-1}{n-1}>k.
  \end{align}
  The verification of these claims is deferred to the appendix.
\end{example}

\subsection{Results}
Our first result concerns the existence of stationary solutions to~\eqref{eq:eq},
characterized as minimizers of~$\nrg_\eps$.
\begin{theorem}
  \label{thm:stationary}
  For each $\eps\in[0,\eps^*]$, there is a unique minimizer $(\bar u_\eps,\bar v_\eps)$ of~$ \nrg_\eps$.
  The densities~$\bar u_\eps$ and~$\bar v_\eps$ have compact supports that are sublevels of~$\Phi$ and~$\Psi$, respectively.
  Further, if~$h$ degenerates to order $k\in \N$,
  then the restrictions of $F'(\bar u_\eps)$ and $G'(\bar v_\eps)$ to their respective supports
  are bounded in $C^k$, uniformly with respect to $\eps\in[0,\eps^*]$.
\end{theorem}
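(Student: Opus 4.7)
The strategy splits into three steps, each addressing one of the three assertions.

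\emph{Existence and uniqueness.} I would apply the direct method of the calculus of variations on $\prbtwotwo$. The key algebraic fact is that for every $\eps\in[0,\eps^*]$ the pointwise integrand decomposes as
\begin{align*}
 F(u)+G(v)+\eps h(u,v) = \bigl(1-\tfrac{\eps}{2\eps^*}\bigr)\bigl(F(u)+G(v)\bigr) + \tfrac{\eps}{2\eps^*}\bigl(F(u)+G(v)+2\eps^*h(u,v)\bigr),
\end{align*}
which by \eqref{hyp:hconvex} and the strict convexity of $F,G$ on $[0,\infty)$ makes it jointly \emph{strictly} convex in $(u,v)$, and by \eqref{hyp:hbound} also pointwise non-negative. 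Combined with the coercivity $\Phi(x)\ge\tfrac{\Lambda}{2}|x-\underline x_\Phi|^2$ from \eqref{eq:Phiupbound} (yielding uniform second-moment control along minimizing sequences) and with the super-linearity of $F,G$ at infinity from \eqref{hyp:quadratic_growth} (yielding equi-integrability via de la Vallée-Poussin), any minimizing sequence admits a subsequence converging weakly in $L^1(\R^d)^2$ to a pair of probability densities of finite second moment. Lower semicontinuity of the convex non-negative integrand transfers minimality to the limit, and strict convexity rules out a second minimizer by the usual convex-combination argument.

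\emph{Compact support.} Admissible variations of $\bar u_\eps$ preserving mass and non-negativity yield Lagrange multipliers $c_1,c_2\in\R$ with
\begin{align*}
 F'(\bar u_\eps)+\eps\,\partial_u h(\bar u_\eps,\bar v_\eps)+\Phi = c_1 \quad \text{on } \{\bar u_\eps>0\},
\end{align*}
and the analogous identity with $c_2$ for $\bar v_\eps$. Since $F'\ge 0$, $F'(0)=0$, and $\partial_u h$ vanishes whenever $\bar u_\eps=0$ by \eqref{hyp:hvanish}, passage to the boundary of $\{\bar u_\eps>0\}$ forces $\Phi=c_1$ there, while the complementarity condition on $\{\bar u_\eps=0\}$ forces $\Phi\ge c_1$. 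Hence $\{\bar u_\eps>0\}=\{\Phi<c_1\}$, and analogously for $\bar v_\eps$; both sets are compact by the coercivity \eqref{eq:Phiupbound}.

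\emph{Uniform $C^k$-regularity.} Setting $\rho=F'(\bar u_\eps)$ and $\eta=G'(\bar v_\eps)$ and using the definitions \eqref{eq:deftheta}, the Euler--Lagrange system rewrites as
\begin{align*}
 \Theta_\eps(\rho,\eta)\coloneqq\bigl(\rho+\eps\theta_u(\rho,\eta),\,\eta+\eps\theta_v(\rho,\eta)\bigr)=(c_1-\Phi,\,c_2-\Psi)
\end{align*}
on the joint support. Under the $k$-boundedness hypothesis, $\Theta_\eps$ is a $C^k$-perturbation of the identity on $\Rnn^2$ of size $\lesssim\eps$, so shrinking $\eps^*$ if necessary the inverse function theorem provides a $C^k$-inverse $\Theta_\eps^{-1}$ whose norm is bounded uniformly in $\eps\in[0,\eps^*]$; the $k$-degeneracy of $(F,G,h)$ guarantees compatibility of $\Theta_\eps^{-1}$ with the closed-quadrant boundary, so that the resulting map retains its $C^k$-bound up to the boundary of the support. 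Composition with the smooth map $x\mapsto(c_1-\Phi(x),c_2-\Psi(x))$ then yields the required uniform $C^k$-bound on $F'(\bar u_\eps)$ and $G'(\bar v_\eps)$ restricted to their supports.

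The main technical obstacle I anticipate is in step two: turning the one-sided variational inequality into the sharp sublevel identification requires careful complementarity analysis at the free boundary $\partial\{\bar u_\eps>0\}\subset\{\Phi=c_1\}$, and in particular ruling out a jump of $\bar u_\eps$ at that boundary. By contrast, the direct method in step one and the implicit function argument in step three should be fairly routine once the convex-combination decomposition and the smallness of $\eps(\theta_u,\theta_v)$ have been put in place.
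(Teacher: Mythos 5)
Your step one (existence and uniqueness via the convex splitting of $H_\eps$ and the direct method) is correct and is essentially the paper's argument. The first genuine gap is in step two. Your justification of $\{\bar u_\eps>0\}\subseteq\{\Phi<c_1\}$ does not work as stated: on the set where \emph{both} $\bar u_\eps>0$ and $\bar v_\eps>0$, the Euler--Lagrange identity gives $F'(\bar u_\eps)+\eps\,\partial_uh(\bar u_\eps,\bar v_\eps)=c_1-\Phi$, and nothing you invoke prevents $\partial_uh(\bar u_\eps,\bar v_\eps)$ from being negative and outweighing $F'(\bar u_\eps)$; the facts $F'\ge0$, $F'(0)=0$ and $\partial_uh=0$ where $\bar u_\eps=0$ only yield the \emph{other} inclusion $\{\Phi<c_1\}\subseteq\{\bar u_\eps>0\}$. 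The missing ingredient is the strict monotonicity of $\dff H_\eps$ coming from \eqref{hyp:hconvex}: testing monotonicity along the segment from $(0,\bar v_\eps)$ to $(\bar u_\eps,\bar v_\eps)$ and using $\partial_uh(0,v)=0$ from \eqref{hyp:hvanish} gives $F'(u)+\eps\,\partial_uh(u,v)>0$ whenever $u>0$; this is exactly Remark~\ref{remark_H_eps} (``$\dff H_\eps$ maps the positive cone into itself''), and without it neither compactness of the supports nor the sublevel identification \eqref{eq:supps} follows. Note also that ``passage to the boundary of $\{\bar u_\eps>0\}$'' presupposes continuity of $\bar u_\eps$, which itself must be proven (the paper obtains it from the continuous invertibility of $\dff H_\eps$).

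The second gap is in step three. The Euler--Lagrange system valid on all of $\R^d$ is $\Gamma_\eps(\bar\rho_\eps,\bar\eta_\eps)=\big((U_\eps-\Phi)_+,(V_\eps-\Psi)_+\big)$, and the second component is only Lipschitz across $\partial\spt{v}$, a hypersurface which in general passes through the \emph{interior} of $\spt{u}$. Composing a $C^k$ inverse with this right-hand side therefore yields only Lipschitz regularity of $F'(\bar u_\eps)$ along $\spt{u}\cap\partial\spt{v}$; your appeal to ``compatibility with the closed-quadrant boundary'' does not address this internal interface, which is precisely where the real work of Proposition~\ref{prp:regularity} lies: one must show, by induction on the order of differentiation, that the one-sided limits of all derivatives up to order $k$ agree there, using $k$-degeneracy (the derivatives of $\theta_u$ evaluated at $(\bar\rho_\eps,\bar\eta_\eps)$ vanish as $\bar\eta_\eps\to0$, killing the non-smooth contributions). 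Two further points: (i) you should not need to shrink $\eps^*$ --- the theorem claims uniformity on all of $[0,\eps^*]$, and the paper obtains invertibility without any smallness of $\eps$ from the determinant bound $\det\dff\Gamma_\eps\ge\tfrac14$, which follows from \eqref{eq:Hqual}, i.e.\ from \eqref{hyp:hconvex}; (ii) the $\eps$-uniformity of the $C^k$ bounds (and of $U_\eps$, $V_\eps$ and the support diameters) rests on the uniform $L^\infty$ bound $\bar u_\eps,\bar v_\eps\le\bar U$, which the paper proves by a mass-rearrangement competitor argument and which your proposal never establishes; this matters all the more since the theorem assumes only $k$-degeneracy (not $k$-boundedness), so the needed bounds on $\theta_u,\theta_v$ and their derivatives are available only on compact subsets of $\Rnn^2$.
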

A point of crucial importance is that for degeneracy of order two,
the functions $\partial_uh(\bar u_\eps,\bar v_\eps)$ and $\partial_vh(\bar u_\eps,\bar v_\eps)$ are in $C^2(\R^d)$,
with a global bound on second derivatives that is independent of $\eps\in[0,\eps^*]$.
This is needed to establish the functional inequalities~\eqref{eq:thisone},
which are essential for our proofs of the following results.
Concerning existence of transient solutions, we have:
\begin{theorem}
  \label{thm:transient}
  Assume in addition that $(F,G,h)$ is $2$-bounded and $2$-degenerate,
  and that the swap condition holds.
  There is some $\bar\eps>0$, such that for each $\eps\in[0,\bar\eps]$
  and any initial data $(u_0,v_0)\in\prbtwotwo$ of finite energy $\nrg_\eps(u_0,v_0)<\infty$,
  there exists a transient weak solution $(u_\eps(t),v_\eps(t))_{t\ge0}$ to the initial value problem for~\eqref{eq:eq}.
\end{theorem}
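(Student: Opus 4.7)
\textbf{Plan for the proof of Theorem~\ref{thm:transient}.}
The plan is to construct solutions by the JKO minimizing-movement scheme on the product space $\prbtwotwo$ and then pass to the continuous-time limit. Fix $\eps\in[0,\bar\eps]$ with $\bar\eps\le\eps^*/2$ to be chosen. Given initial datum $(u_0,v_0)$ and step size $\tau>0$, define iteratively $(u_\tau^n,v_\tau^n)$ as a minimizer of
\begin{equation*}
  (u,v)\mapsto \nrg_\eps(u,v)+\frac{1}{2\tau}\bigl(\wass^2(u,u_\tau^{n-1})+\wass^2(v,v_\tau^{n-1})\bigr).
\end{equation*}
Existence of a minimizer at each step follows from narrow lower semicontinuity of the integral functional (standard for the $F,G,\Phi,\Psi$-parts; for the $h$-contribution by using \eqref{hyp:hbound} to dominate $|\eps h|$ by $\tfrac{1}{2}(F+G)$) together with coercivity: the same bound yields $\nrg_\eps(u,v)\ge \tfrac12\intrd(F(u)+G(v))\dd x+\intrd(u\Phi+v\Psi)\dd x$, while the $\Lambda$-convex potentials control the second moments and thus compactness in $\wass$.

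Next I would derive the discrete Euler--Lagrange equations by perturbing each component along flows of smooth compactly supported vector fields. Using the Otto-calculus identification of Wasserstein subdifferentials, the minimizers satisfy, $u_\tau^n$-a.e. and $v_\tau^n$-a.e. respectively,
\begin{equation*}
  \frac{\id-T_u^n}{\tau}=\nabla\bigl[F'(u_\tau^n)+\Phi+\eps\,\partial_uh(u_\tau^n,v_\tau^n)\bigr],\qquad \frac{\id-T_v^n}{\tau}=\nabla\bigl[G'(v_\tau^n)+\Psi+\eps\,\partial_vh(u_\tau^n,v_\tau^n)\bigr],
\end{equation*}
where $T_u^n,T_v^n$ are the optimal transport maps to the previous iterates. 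The 2-boundedness hypothesis ensures that $\partial_uh,\partial_vh$ are bounded and smooth, so these identities are meaningful and give a discrete weak formulation against test functions after multiplying by $u_\tau^n$, $v_\tau^n$ and integrating.

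The core a priori bounds come from two sources. First, the telescoping energy inequality
\begin{equation*}
  \nrg_\eps(u_\tau^N,v_\tau^N)+\sum_{n=1}^N\frac{1}{2\tau}\bigl(\wass^2(u_\tau^n,u_\tau^{n-1})+\wass^2(v_\tau^n,v_\tau^{n-1})\bigr)\le\nrg_\eps(u_0,v_0)
\end{equation*}
yields uniform Wasserstein equicontinuity in time and uniform second-moment bounds. Second, a flow-interchange computation against the decoupled gradient flow of $\intrd F(u)+G(v)\dd x$ --- or directly against the Boltzmann entropy combined with \eqref{hyp:quadratic_growth} and \eqref{hyp:powerF} --- produces spatial dissipation bounds of the form $\int_0^T\intrd u|\nabla F'(u_\tau)|^2+v|\nabla G'(v_\tau)|^2\dd x\dd t\le C$, which by the growth hypotheses give uniform $L^2_t H^1_x$ control on $u_\tau^{m/2}$ and $v_\tau^{n/2}$. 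In this last step the smallness of $\eps$ is used to absorb the perturbative coupling terms; the key ingredient for the absorption is the swap condition \eqref{hyp:swap}, which bounds the cross derivative $\partial_{uv}h$ in such a way that the product $u|\partial_{uv}h\,\nabla v|^2$ is controlled by $v|\nabla G'(v)|^2$, and symmetrically.

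Finally, combining the Wasserstein equicontinuity with the spatial Sobolev bound gives, via a Rossi--Savar\'e type Aubin--Lions argument, strong $L^1((0,T)\times\R^d)$ compactness of the piecewise constant interpolations $\bar u_\tau,\bar v_\tau$ as $\tau\downarrow0$. The 2-boundedness of $(F,G,h)$ plus the degeneracy \eqref{hyp:hvanish} then let the nonlinear terms $\partial_uh(\bar u_\tau,\bar v_\tau)$ and $\partial_vh(\bar u_\tau,\bar v_\tau)$ pass to their pointwise limits in $L^1$, and the discrete weak formulation converges to the distributional form of \eqref{eq:eq}. I expect the main obstacle to lie in the flow-interchange step, specifically in absorbing the coupling contributions $\eps\,u\nabla\partial_uh(u,v)$ and $\eps\,v\nabla\partial_vh(u,v)$ into the decoupled dissipation without deteriorating the uniform-in-$\tau$ bounds: it is precisely here that \eqref{hyp:swap} together with the smallness threshold $\bar\eps$ must be invoked to close the estimates.
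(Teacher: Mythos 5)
Your overall architecture (JKO scheme, discrete Euler--Lagrange identity, telescoped energy inequality, flow interchange, Rossi--Savar\'e compactness) coincides with the paper's, but the central a priori estimate in your plan is muddled in a way that leaves a genuine gap. Flow interchange with the Boltzmann entropy dissipates $\nrg_\eps$ along the heat flow at a rate bounded below, via \eqref{eq:Hqual}, by $\intrd F''(u)|\nabla u|^2\dd x$ (plus the analogous $v$-term); this is what controls $u^{m/2}$ in $H^1$, equivalently $\nabla[F'(u)]$ truncated at height $F'(1)$ as in Lemma~\ref{lem:onestepuFprime_cut}, and it requires no smallness of $\eps$ beyond $\eps\le\eps^*$. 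It does \emph{not} produce the weighted quantity $\intrd u|\nabla F'(u)|^2\dd x$ you state, and conversely that weighted quantity does not control $u^{m/2}$ in $H^1$: near the degeneracy, $u|\nabla F'(u)|^2\sim u^{2m-3}|\nabla u|^2$ while $|\nabla u^{m/2}|^2\sim u^{m-2}|\nabla u|^2$, so the weighted bound is strictly weaker where $u$ is small. In the paper the weighted estimate comes directly from the Euler--Lagrange identity (Lemma~\ref{lem:onestepuFprime}, where the swap condition, the constant $A$, and the smallness threshold \eqref{eqn_eps_bar} do their absorption work), and only the \emph{combination} of \eqref{eq:highregular} with the truncated entropy-interchange bound \eqref{eq:lowregular} yields the full unweighted $H^1$ control \eqref{eq:allregular} on $F'(u_\tau)$ and $G'(v_\tau)$. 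That full control is not optional: the discrete flux contains $\nabla\partial_uh(u,v)=\theta_{u,\rho}\nabla F'(u)+\theta_{u,\eta}\nabla G'(v)$, so the limit passage must handle weak convergence of these \emph{gradients} and identification of the product limits (Lemmas~\ref{lem:filippo} and~\ref{lem:ELconv}); your closing step invokes only pointwise/$L^1$ convergence of $\partial_uh(\tilde u_\tau,\tilde v_\tau)$ itself, which does not reach the flux. If you insist on getting the weighted bound by interchange against the decoupled degenerate flow $\partial_su=\dv(u\nabla F'(u))$ instead of from the Euler--Lagrange identity, you face the additional rigor problem that this auxiliary flow is not smooth, which is precisely why the paper interchanges with the heat flow.

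A secondary but real issue is your derivation of the Euler--Lagrange identity by perturbing $u$ along flows of vector fields: differentiating the coupling term $\intrd h(u^\delta,v^*)\dd x$ with $u^\delta=(\id+\delta\xi)\#u^*$ produces, after the change of variables, a contribution of the form $\intrd \partial_vh(u^*,v^*)\,\nabla v^*\cdot\xi\dd x$, and at that stage no weak differentiability of $v^*$ is available. The paper sidesteps this by vertical perturbations $u^\delta=(1+\delta\rho)u^*$ combined with the dual Kantorovich formulation (Lemma~\ref{lem:optpotentials}), which in turn requires the auxiliary positivity assumption \eqref{eq:addhypo} on the previous iterate and its subsequent removal by a $\Gamma$-convergence argument in Section~\ref{sct:removehypo}; some substitute for this circle of arguments is needed in your plan as well.
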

Here the most significant point is not the mere existence but the way of construction,
namely via the minimizing movement scheme for~$\nrg_\eps$ in the combined $L^2$-Wasserstein distances.

Finally, the main result of this paper is about the long-time asymptotics of transient solutions.
\begin{theorem}
  \label{thm:longtime}
  Under the same conditions as in Theorem~\ref{thm:transient} above,
  there exist constants $K>0$ and $B\ge1$ such that the following is true for all $\eps\in[0,\bar\eps]$:
  the transient solution $(u_\eps(t),v_\eps(t))_{t\ge0}$ constructed in the proof of Theorem~\ref{thm:transient}
  converges to the unique global minimizer $(\bar u_\eps,\bar v_\eps)$ from Theorem~\ref{thm:stationary}
  at exponential rate $\Lambda_\eps=\Lambda-K\eps$.
  More precisely, with~$\lyp$ being the Lyapunov functional defined in~\eqref{eq:deflyp},
  there holds
  \begin{align}
    \label{eq:lypdecay}
    \lyp\big(u_\eps(t),v_\eps(t)\big) \le\lyp(u_0,v_0) \, \exp\big(-2\Lambda_\eps t\big),
  \end{align}
  and in particular,~$u_\eps(t)$ and~$v_\eps(t)$ converge in $L^1(\R^d)$ to~$\bar u_\eps$ and~$\bar v_\eps$, respectively,
  with
  \begin{align}
    \label{eq:1}
    \|u_\eps(t)-\bar u_\eps\|_{L^1}^2 + \|v_\eps(t)-\bar v_\eps\|_{L^1}^2 \le B (\nrg_\eps(u_0,v_0) + 1) \, \exp\big(-2\Lambda_\eps t\big).
  \end{align}
\end{theorem}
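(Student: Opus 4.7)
My plan is to combine the functional inequality \eqref{eq:thisone} (Lemma~\ref{lem:lypconvex}) with the dissipation comparison in Proposition~\ref{prp:core} via a Grönwall-type argument for a suitable Lyapunov functional, and then convert Lyapunov decay into $L^1$ decay through a Csiszár–Kullback-type estimate that exploits the compact support of the steady state.

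The natural choice of Lyapunov functional, dictated by the left-hand side of \eqref{eq:thisone}, is
\begin{align*}
  \lyp(u,v) &\coloneqq \intrd \big[F(u)-F(\bar u_\eps)+(u-\bar u_\eps)\bigl(\Phi+\eps\partial_uh(\bar u_\eps,\bar v_\eps)\bigr)\big]\dd x \\
  &\quad{}+ \intrd \big[G(v)-G(\bar v_\eps)+(v-\bar v_\eps)\bigl(\Psi+\eps\partial_vh(\bar u_\eps,\bar v_\eps)\bigr)\big]\dd x,
\end{align*}
which is non-negative (by convexity of $F,G$ together with the Euler--Lagrange equation satisfied by $(\bar u_\eps,\bar v_\eps)$ on its support) and vanishes exactly at the minimizer. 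Along the JKO-constructed solution, the energy dissipation identity reads
\begin{align*}
  -\frac{d}{dt}\nrg_\eps(u_\eps,v_\eps) = \intrd u_\eps\bigl|\nabla[F'(u_\eps)+\Phi+\eps\partial_uh(u_\eps,v_\eps)]\bigr|^2\dd x + (v\text{-analogue}),
\end{align*}
and because mass is preserved in each component, the linear terms distinguishing $\lyp$ from $\nrg_\eps$ are constant in time, so the same formula holds with $\lyp$ in place of $\nrg_\eps$. Proposition~\ref{prp:core} then shows that the right-hand side is no smaller than $(1-K_1\eps)$ times the ``frozen'' dissipation obtained by replacing $(u_\eps,v_\eps)$ with $(\bar u_\eps,\bar v_\eps)$ inside $\partial_u h$ and $\partial_v h$, and inequality \eqref{eq:thisone} bounds that frozen dissipation below by $2(\Lambda-K_0\eps)\lyp$. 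Combining the two gives
\begin{align*}
  \frac{d}{dt}\lyp(u_\eps(t),v_\eps(t)) \le -2(\Lambda-K\eps)\,\lyp(u_\eps(t),v_\eps(t)),\qquad K \coloneqq K_0+\Lambda K_1,
\end{align*}
and Grönwall integration yields \eqref{eq:lypdecay}. To derive \eqref{eq:1}, I use the quadratic lower bound $F(u)-F(\bar u_\eps)-F'(\bar u_\eps)(u-\bar u_\eps)\ge c(u-\bar u_\eps)^2$ on the bounded support of $\bar u_\eps$ (afforded by \eqref{hyp:quadratic_growth} and \eqref{hyp:powerF}), Cauchy--Schwarz against $|\mathrm{supp}\,\bar u_\eps|<\infty$, and on the complementary region control of $\|u\eins_{\{\bar u_\eps=0\}}\|_{L^1}$ by the term $\int u\Phi\,\dd x$ via \eqref{eq:Phiupbound} and a further Cauchy--Schwarz against the second moment, bounded in turn by $\nrg_\eps(u_0,v_0)+1$ thanks to the a priori energy monotonicity $\nrg_\eps(u_\eps(t),v_\eps(t))\le\nrg_\eps(u_0,v_0)$.

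The main obstacle is to make the continuous-time differential inequality rigorous: since the transient solution is obtained as a limit of the minimizing movement scheme, the Grönwall step must first be performed at the discrete level. Concretely, I plan a flow-interchange argument in which each JKO step is tested against the auxiliary frozen-coefficient scalar gradient flow $\partial_tu=\dv(u\nabla[F'(u)+\Phi+\eps\partial_uh(\bar u_\eps,\bar v_\eps)])$, which is genuinely $(\Lambda-K_0\eps)$-contractive in Wasserstein and therefore satisfies an \evi -type inequality; this yields the discrete analogue of $\lyp^{k+1}\le(1-2(\Lambda-K\eps)\tau)\lyp^k$ modulo error terms that Proposition~\ref{prp:core} controls by $K_1\eps$ times the dissipation. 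Passing to the limit $\tau\downarrow0$ using lower semicontinuity of both $\lyp$ and the (squared) dissipation in the Wasserstein topology then delivers \eqref{eq:lypdecay}, after which the $L^1$ estimate \eqref{eq:1} is essentially bookkeeping.
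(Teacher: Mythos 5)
Your Grönwall strategy for \eqref{eq:lypdecay} is essentially the paper's: Proposition~\ref{prp:core} \emph{is} already the discrete one-step inequality $\lyp(\hat u,\hat v)-\lyp(u^*,v^*)\ge 2\tau(\Lambda-K\eps)\lyp(u^*,v^*)$, so one simply iterates it along the minimizing movement scheme and passes to the limit $\tau\downarrow0$ using lower semicontinuity of $\lyp$ under narrow convergence, plus $\lyp(u_0,v_0)\le 2\nrg_\eps(u_0,v_0)+C$; your extra flow-interchange/EVI layer is redundant. Be aware also that your heuristic identity is not correct: $\lyp$ and $\nrg_\eps$ do \emph{not} differ by terms that are constant in time --- their difference is $\eps\intrd\big[h(u,v)-u\,\partial_uh(\bar u_\eps,\bar v_\eps)-v\,\partial_vh(\bar u_\eps,\bar v_\eps)\big]\dd x$ up to a constant, see~\eqref{eq:12} --- so $-\frac{\dn}{\dd t}\lyp$ is not the dissipation of $\nrg_\eps$ but the mixed quantity $Z_1+Z_2$ appearing in the proof of Proposition~\ref{prp:core}; comparing it with the frozen dissipation $\dss_1+\dss_2$ is exactly what that proposition accomplishes, so this slip is harmless only because you invoke the proposition anyway.

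The genuine gap is in the step you dismiss as ``essentially bookkeeping'', namely passing from \eqref{eq:lypdecay} to \eqref{eq:1}. First, the pointwise bound $d_F(u|\bar u_\eps)\ge c\,(u-\bar u_\eps)^2$ with a constant $c>0$ uniform on the support of $\bar u_\eps$ is false whenever $m>2$: by \eqref{hyp:powerF} one has $F''(r)\simeq r^{m-2}\to0$ as $r\downarrow0$, and near $\partial\spt{u}$, where $\bar u_\eps$ vanishes, one gets for instance $d_F(2\delta|\delta)\simeq\delta^m\ll\delta^2$; hypothesis \eqref{hyp:quadratic_growth} only provides uniform convexity away from zero. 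This is precisely why the paper's Lemma~\ref{lem:ck} proves the degenerate bound $d_F(u|\bar u_\eps)\gtrsim\bar u_\eps^{m-2}(u-\bar u_\eps)^2$ on $\{u<\bar u_\eps\}$ and then must establish $\int_{\spt{u}}\bar u_\eps^{-(m-2)}\dd x<\infty$, using the lower bound $\bar u_\eps\gtrsim(U_\eps-\Phi)^{1/(m-1)}$ from the Euler--Lagrange system \eqref{eq:EL}, the gradient bound \eqref{eq:Phigradbound} and a coarea-formula argument exploiting $(m-2)/(m-1)<1$; this is the nontrivial core of the $L^1$ conversion, not bookkeeping. Second, your treatment of the region $\{\bar u_\eps=0\}$ cannot produce \eqref{eq:1}: bounding $\int_{\{\bar u_\eps=0\}}u\dd x$ by $\intrd u\Phi\dd x$, the second moment and energy monotonicity yields only a time-uniform constant, whereas \eqref{eq:1} requires this exterior mass to decay like $e^{-2\Lambda_\eps t}$ (it is a lower bound for $\|u(t)-\bar u_\eps\|_{L^1}$). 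The paper sidesteps the exterior region entirely via the mass-balance identity $\|u-\bar u_\eps\|_{L^1}=2\int_{\{u<\bar u_\eps\}}(\bar u_\eps-u)\dd x$, which confines the estimate to the support of $\bar u_\eps$; without this (or an equivalent device) your argument does not close.
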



\section{Preliminaries}
\label{sct:prelims}
\subsection{Wasserstein distance}
\label{sct:wasserstein}
$\ball_R \coloneqq \{x\in\R^d \colon|x|<R\}$ is the ball of radius $R>0$.
$\leb$ denotes the standard Lebesgue measure on $\R^d$.
For a probability measure $\mu$ on $\R^d$ and a measurable map $T:\R^d\to\R^d$,
the \emph{push-forward} of $\mu$ under $T$ is the uniquely determined probability measure $T\#\mu$ such that
\begin{align}
  \label{eq:defpush}
  \intrd \omega(y)\dd\big(T\#\mu\big)(y) = \intrd \omega\circ T(x)\dd\mu(x),
\end{align}
for any test function $\omega\in C(\R^d)$.
If both $\mu=u\leb$ and $T\#\mu=\hat u\leb$ are absolutely continuous, then we write $T\#u=\hat u$ for brevity.

$\prbtwo$ denotes the space of probability densities $u \colon \R^d\to\Rnn$ of finite second moment.
The natural notion of convergence on $\prbtwo$ is the \emph{narrow} one, i.e., weak convergence in duality with bounded continuous functions.
By Prokhorov's and by Alaoglu's theorem, subsets of densities with uniformly bounded second moment and $L^p$-norm (for some $p>1$)
are sequentially compact in $\prbtwo$.

The $L^2$-Wasserstein distance~$\wass$ is a metric on $\prbtwo$.
Convergence in $\wass$ is equivalent to weak convergence and convergence of the second moments.
Among the various possible definitions of~$\wass$
the following --- known as the (pre-)dual Kantorovich formulation ---
is the most suitable one for our needs:
for $u,\hat{u}\in\prbtwo$,
\begin{align}
  \label{eq:defW}
  \frac12\wass(u,\hat{u})^2 \coloneqq  \sup\left\{ \intrd \varphi(x)u(x)\dd x + \intrd\psi(y)\hat{u}(y)\dd y
  \colon \varphi(x)+\psi(y)\le \frac12|x-y|^2\right\}.
\end{align}
(Note the square and the factor $1/2$ on the left-hand side.)
A priori, the maximization above is carried out over all pairs $(\varphi,\psi)$ for which the integrals are well-defined,
i.e., $\varphi\in L^1(\R^d;u\leb)$ and $\psi\in L^1(\R^d;\hat u\leb)$.
However, it suffices to consider pairs $(\varphi,\psi)$ from the very restrictive class
of \emph{$c$-conjugate \footnote{The $c$ refers to the cost function, which is the standard one here, $c(x,y)=\frac12|x-y|^2$.} potentials}.
The latter means that the \emph{auxiliary potentials} $\tilde\varphi,\tilde\psi:\R^d\to\R\cup\{+\infty\}$ given by
\begin{align*}
  \tilde\varphi(x) = \frac12|x|^2-\varphi(x), \quad
  \tilde\psi(y) = \frac12|y|^2 -\psi(y)
\end{align*}
are proper, lower semi-continuous, convex, and Legendre-dual to each other,
$\tilde\varphi^\ast=\tilde\psi$ and $\tilde\psi^\ast=\tilde\varphi$.
Note that knowledge of either $\varphi$ or $\psi$ determines the respective other.
Further note that $\varphi(x)+\psi(y)\le\frac12 |x-y|^2$ is automatically satisfied since $\tilde\varphi(x)+\tilde\psi(y)\ge x\cdot y$.

The supremum in~\eqref{eq:defW} is attained by an optimal pair $(\varphi_\opt,\psi_\opt)$ of $c$-conjugate potentials.
Uniqueness of optimal pairs is delicate in general, even after removing
the global gauge invariance $(\varphi,\psi)\leadsto(\varphi+C,\psi-C)$.
For us, the following special case is important:
if~$\hat u$ is positive $\leb$-a.e.~on a ball $\ball_R\subset\R^d$ and zero outside,
then $\varphi_\opt$ is unique $u\leb$-a.e.~up to a global constant.

For an optimal pair $(\varphi_\opt,\psi_\opt)$,
the \emph{optimal transport map} $T \colon \R^d\to\R^d$ from~$u$ to~$\hat{u}$ is given by
\begin{align}
  \label{eq:defT}
  T(x) \coloneqq  x - \nabla\varphi_\opt(x),
\end{align}
which is well-defined $u\leb$-a.e.
It satisfies
\begin{align}
  \label{eq:pushbyT}
  \hat{u} = T\#u,
\end{align}
and
\begin{align}
  \label{eq:monge}
  \wass(u,\hat{u})^2 = \intrd |T(x)-x|^2u(x)\dd x.
\end{align}
$T$ is $u\leb$-a.e.\ unique for a given pair $(u,\hat u)$, which implies the following converse:
if $T=\nabla\tilde\varphi \colon \R^d\to\R^d$ is the $u\leb$-a.e.~defined
gradient of a proper, lower semi-continuous and convex function $\tilde\varphi \colon \R^d\to\R$,
and satisfies \eqref{eq:pushbyT},
then $T$ also satisfies \eqref{eq:monge},
and $\varphi_\opt(x)\coloneqq \frac12|x|^2-\tilde\varphi(x)$ gives rise to an optimal pair $(\varphi_\opt,\psi_\opt)$ of $c$-conjugate potentials.

Finally, we recall a characterization of geodesics:
define the interpolating maps $T_s \colon \R^d\to\R^d$ for all $s\in[0,1]$
by $T_s (x) \coloneqq  (1-s)x+sT(x) = x - s\nabla\varphi_u(x)$.
Then the curve $(u_s)_{s\in[0,1]}$ in $\prbtwo$ given by $u_s\coloneqq T_s\#\rho$ is a geodesic
joining $u=u_0$ to $\hat{u}=u_1$, that is,
\begin{align*}
  \wass(u,u_s) = s\wass(u,\hat{u}), \quad \wass(u_s,\hat{u}) = (1-s)\wass(u,\hat{u}).
\end{align*}
The natural space for solutions $(u,v)$ to~\eqref{eq:eq} is the cross product $\prbtwotwo$.
We endow it with a metric~$\dwass$ in the straight-forward way:
\begin{align*}
  \dwass\big((u,v),(\hat{u},\hat{v})\big) \coloneqq  \sqrt{\wass(u,\hat{u})^2+\wass(v,\hat{v})^2}.
\end{align*}
The following is easily seen.
\begin{lemma}
  \label{lem:doublegeodesic}
  A curve $(u_s,v_s)_{0\le s\le1}$ in $\prbtwotwo$ is a geodesic in $\prbtwotwo$ between $(u_0,v_0)$ and $(u_1,v_1)$
  if and only if  $(u_s)_{s\in[0,1]}$ and $(v_s)_{s\in[0,1]}$ are geodesics
  in $\prbtwo$ between~$u_0$,~$u_1$, and between~$v_0$,~$v_1$, respectively.  
\end{lemma}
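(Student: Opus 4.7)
The plan is to reduce the statement to the equality case of the Cauchy--Schwarz inequality, using that $\dwass^2$ is simply the sum of the two squared Wasserstein distances.

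The backward implication is almost immediate. Assuming $(u_s)_{s\in[0,1]}$ and $(v_s)_{s\in[0,1]}$ are geodesics in $\prbtwo$, I would just compute
\begin{align*}
\dwass\big((u_0,v_0),(u_s,v_s)\big)^2 = \wass(u_0,u_s)^2 + \wass(v_0,v_s)^2 = s^2 \dwass\big((u_0,v_0),(u_1,v_1)\big)^2,
\end{align*}
and analogously $\dwass((u_s,v_s),(u_1,v_1)) = (1-s)\dwass((u_0,v_0),(u_1,v_1))$, which is the definition of a geodesic in $\prbtwotwo$.

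For the forward direction, I would introduce the shorthand $A=\wass(u_0,u_1)$, $C=\wass(v_0,v_1)$, $a=\wass(u_0,u_s)$, $b=\wass(u_s,u_1)$, $c=\wass(v_0,v_s)$, $d=\wass(v_s,v_1)$. The geodesic assumption on $(u_s,v_s)$ yields $a^2+c^2 = s^2(A^2+C^2)$ and $b^2+d^2 = (1-s)^2(A^2+C^2)$. The triangle inequality in $\prbtwo$ provides $a+b\ge A$ and $c+d\ge C$, while Cauchy--Schwarz gives $ab+cd \le \sqrt{(a^2+c^2)(b^2+d^2)} = s(1-s)(A^2+C^2)$. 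Chaining these inequalities yields
\begin{align*}
A^2+C^2 \le (a+b)^2 + (c+d)^2 = (a^2+c^2)+(b^2+d^2)+2(ab+cd) \le A^2+C^2,
\end{align*}
so every estimate is saturated. The saturated triangle inequalities force $a+b=A$ and $c+d=C$, while equality in Cauchy--Schwarz forces $(a,c)\parallel(b,d)$; combining these three identities pins down $a=sA$, $b=(1-s)A$, $c=sC$, $d=(1-s)C$. Hence $(u_s)$ and $(v_s)$ are geodesics individually.

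There is no real obstacle here: the argument is a textbook product-metric computation. The only subtle point to watch is that one needs \emph{both} triangle inequalities \emph{and} the Cauchy--Schwarz bound to be saturated simultaneously, so as to identify the speeds of the two component curves separately; the inequality $\dwass^2=\wass^2+\wass^2$ by itself only controls their joint norm.
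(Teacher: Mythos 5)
Your proof is correct. The paper does not actually supply an argument for this lemma (it is dismissed with ``easily seen''), so there is no official proof to compare against; your product-metric computation --- backward direction by direct substitution, forward direction by saturating the two triangle inequalities together with the Cauchy--Schwarz bound $ab+cd\le\sqrt{(a^2+c^2)(b^2+d^2)}$ --- is exactly the standard way to make that remark rigorous, and it matches the paper's working characterization of geodesics via $\wass(u_0,u_s)=s\,\wass(u_0,u_1)$ and $\wass(u_s,u_1)=(1-s)\,\wass(u_0,u_1)$. Two cosmetic points: the equality case of Cauchy--Schwarz yields $(b,d)=\mu(a,c)$ only when neither vector vanishes, so you should dispose of the trivial cases $s\in\{0,1\}$ and $(u_0,v_0)=(u_1,v_1)$ separately (they are immediate); and if one insists on the constant-speed formulation $\wass(u_s,u_t)=|t-s|\,\wass(u_0,u_1)$ for all pairs $s<t$, the identical argument applied to each such pair gives it, so nothing further is needed.
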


\subsection{Displacement convexity}
\begin{definition}
  A functional~$\fnc$ on $\prbtwo$ is \emph{$\lambda$-uniformly displacement convex} with some modulus $\lambda\in\R$ if the real function
  \begin{align*}
    [0,1]\ni s\mapsto \fnc\big(T_s\#u)-\frac\lambda2s(1-s)\wass(u,\hat{u})^2
  \end{align*}
  is convex for any family $(T_s)_{s\in[0,1]}$ realizing the geodesic between~$u$ and $\hat{u}=T_1\#u$.
\end{definition}
Displacement convex functionals are rare.
An important class of examples is given by the sum of internal and potential energy:
\begin{align}
  \label{eq:cvxxmp}
  \fnc(u) = \intrd \big[e(u) + V u\big]\dd x.
\end{align}
In this case~$\fnc$ is $\lambda$-uniformly displacement convex
provided that the convex function $e \colon \Rnn\to\R$ satisfies McCann's condition,
and that $V \colon \R^d\to\R$ is $\lambda$-convex in the usual sense.
A consequence of that property is the validity of a functional inequality,
see e.g.~\cite[Theorem 2.1]{FuncIneq}.
\begin{lemma}
  \label{lem:FuncIneq}
  Assume that the functional~$\fnc$ of the type~\eqref{eq:cvxxmp} is
  such that the convex function $e \colon \Rnn\to\R$ satisfies McCann's condition,
  and such that $V \colon \R^d\to\R$ is $\lambda$-convex for $\lambda>0$.
  Then~$\fnc$ possesses a unique minimizer $u_*\in\prbtwo$,
  and for all $u\in\prbtwo$, there holds:
  \begin{align}
    \label{eq:FuncIneq}
    2\lambda\big[\fnc(u)-\fnc(u_*)\big] \le \intrd u \big|\nabla\big[e'(u)+V\big]\big|\dd x.
  \end{align}
\end{lemma}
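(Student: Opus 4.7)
The plan is to obtain the inequality through an HWI-type estimate and then conclude via Young's inequality. Before that, existence and uniqueness of the minimizer $u_*$ is rather standard: since $V$ is $\lambda$-convex with $\lambda>0$ it is coercive, so sublevel sets of $\fnc$ are tight and hence narrowly relatively compact in $\prbtwo$, while $\fnc$ is narrowly lower semi-continuous (the internal-energy part by Young-measure/convexity arguments, the potential part by lower semi-continuity of $V$). The strict $\lambda$-uniform displacement convexity of $\fnc$ --- which follows from McCann's condition on $e$ together with $\lambda$-convexity of $V$ --- then yields uniqueness.

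For the main inequality I would fix $u\in\prbtwo$ with $\fnc(u)<\infty$ and let $\varphi$ be the Kantorovich potential of the optimal transport from $u$ to $u_*$, so that $u_s \coloneqq (\id-s\nabla\varphi)\#u$ is the Wasserstein geodesic with $u_0=u$ and $u_1=u_*$. Writing down $\lambda$-displacement convexity for $\fnc(u_s)$, dividing by $s$, and letting $s\downarrow 0$ yields
\begin{align*}
\fnc(u) - \fnc(u_*) + \tfrac{\lambda}{2}\wass(u,u_*)^2 \le -\frac{d^+}{ds}\bigg|_{s=0}\fnc(u_s).
\end{align*}
The Otto-type chain rule for functionals of the form \eqref{eq:cvxxmp} identifies the right-hand side as $\intrd u\,\nabla[e'(u)+V]\cdot\nabla\varphi\dd x$, and the Cauchy--Schwarz inequality combined with the Benamou--Brenier identity $\intrd u|\nabla\varphi|^2\dd x = \wass(u,u_*)^2$ turns this into the HWI-type bound
\begin{align*}
\fnc(u) - \fnc(u_*) + \tfrac{\lambda}{2}\wass(u,u_*)^2 \le \wass(u,u_*)\sqrt{\intrd u\,\big|\nabla[e'(u)+V]\big|^2\dd x}.
\end{align*}

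Finally, Young's inequality $ab \le \tfrac{1}{2\lambda}a^2 + \tfrac{\lambda}{2}b^2$, with $a$ equal to the square root of the dissipation integral and $b=\wass(u,u_*)$, exactly absorbs the $\wass^2$-term on the left-hand side and produces $2\lambda[\fnc(u)-\fnc(u_*)] \le \intrd u\,|\nabla[e'(u)+V]|^2\dd x$, which is the claim (with the square that is implicit in the statement from the context of~\eqref{eq:thisone}).

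I expect the main technical obstacle to be the rigorous justification of the chain rule and the right derivative of $\fnc$ along $(u_s)_{s\in[0,1]}$. For a general $u\in\prbtwo$ the displacement interpolant need not be regular enough for direct differentiation; the natural remedy is to approximate $u$ by smoother, compactly supported densities for which the differentiation is unambiguous (using the explicit Jacobian formula for $T_s$ together with McCann's condition to control $e$-terms), derive the inequality for each approximant, and pass to the limit using narrow lower semi-continuity of both sides of~\eqref{eq:FuncIneq}. This is precisely the program carried out in~\cite{FuncIneq}.
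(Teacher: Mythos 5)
Your argument is correct: the paper itself gives no proof of this lemma but imports it from~\cite[Theorem 2.1]{FuncIneq}, and your above-tangent/HWI derivation (displacement convexity along the geodesic $u_s$, chain rule plus Cauchy--Schwarz, then Young's inequality to absorb the $\wass^2$-term) is precisely the standard argument carried out in that reference, including the usual approximation step needed to justify the differentiation along the interpolant. You are also right that the missing square on $\big|\nabla[e'(u)+V]\big|$ in~\eqref{eq:FuncIneq} is a typo in the statement, as comparison with~\eqref{eq:thisone} and~\eqref{eq:dssuv} confirms.
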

Functionals of the type~\eqref{eq:cvxxmp} with $\lambda\ge0$ actually even enjoy the stronger property
of being convex along generalized geodesics, which has a variety of consequences.
The only consequence needed below is for the special case $h(u)=u\log u$ and $V\equiv0$,
when $\fnc=\ent$ is the entropy functional,
\begin{align}
  \label{eq:entropy}
  \ent(u) = \intrd u\log u\dd x.
\end{align}
The metric gradient flow of~$\ent$ is the heat equation $\partial_sU_s=\Delta U_s$,
and (thanks to convexity along generalized geodesics)
it satisfies the so-called \emph{evolution variational inequality} \evi, see~\cite[Theorem~4.0.4]{AGS}, which is
\begin{align}
  \label{eq:evi}
  \frac12\frac{\dn^+}{\dd s}\bigg|_{s=0^+}\wass\big(U_s,w\big)^2 \le \ent(w)-\ent(U_0)
\end{align}
for all $w\in\prbtwo$ and all solutions to  $\partial_sU_s=\Delta U_s$.

\subsection{Loss of displacement convexity for mixtures}
We indicate why the aforementioned general theory of $\lambda$-uniformly displacement convex functionals does not apply to the energy functional~$\nrg_\eps$ for proving exponential convergence to equilibrium in~\eqref{eq:eq}. Specifically, we show that (the two-component analogue of) displacement convexity cannot be expected for a functional of the form~$\nrg_\eps$ on the space $\prbtwotwo$.
\begin{proposition}
  \label{prp:nonconvex}
  Assume that~$h$ is not identically zero, and that $\eps>0$.
  Then, there is no $\lambda\in\R$ such that~$\nrg_\eps$ is $\lambda$-convex along geodesics in $\prbtwotwo$.
  More specifically, for each $\omega\in\Rp$,
  there are functions $u^\omega,v^\omega\in\prbtwo\cap C^\infty(\R^d)$
  such that
  \begin{align}
    \label{eq:nonconvex}
    \frac{\dn^2}{\dd s^2} \bigg|_{s=0}\intrd H_\eps(T_s\#u^\omega,v^\omega)\dd x \le -C(\omega-1),
  \end{align}
  where~$T_s$ is the translation by $s\ge0$ in $x_1$-direction, i.e., $T_s(x)=x+s \ee_1$ and~$H_\eps$ is the function defined in~\eqref{eqn_def_H}. 
\end{proposition}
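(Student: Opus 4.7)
The plan is to exploit translations in one coordinate direction as test geodesics. Since $T_s(x)=x+s\ee_1$ is the gradient of the convex potential $\tfrac12|x|^2+sx_1$, the curve $s\mapsto T_s\#u^\omega$ is a (locally defined) geodesic in $\prbtwo$, and paired with the constant curve $s\mapsto v^\omega$ it becomes, by Lemma~\ref{lem:doublegeodesic}, a geodesic in $\prbtwotwo$. Along this geodesic the internal-energy contributions $\int F(T_s\#u^\omega)\,dx$ and $\int G(v^\omega)\,dx$ are $s$-independent, while the potential term $\int(T_s\#u^\omega)\Phi\,dx=\int u^\omega(y)\Phi(y+s\ee_1)\,dy$ has second $s$-derivative uniformly bounded by $M$ thanks to~\eqref{hyp:Phi}. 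Hence only the coupling term can drive the second derivative of $\int H_\eps(T_s\#u^\omega,v^\omega)\,dx$ to $-\infty$.

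Writing the coupling, after the substitution $y=x-s\ee_1$, as $\int h(u^\omega(y),v^\omega(y+s\ee_1))\,dy$, two differentiations give $\int[\partial_v^2 h\,(\partial_1 v^\omega)^2+\partial_v h\,\partial_1^2 v^\omega]\,dy$ at $s=0$. Integrating the last term by parts in $y_1$ cancels the $\partial_v^2 h\,(\partial_1 v^\omega)^2$ contribution and leaves
\begin{equation*}
  \frac{d^2}{ds^2}\bigg|_{s=0}\int h(T_s\#u^\omega,v^\omega)\,dx \;=\; -\int(\partial_u\partial_v h)(u^\omega,v^\omega)\,\partial_1 u^\omega\,\partial_1 v^\omega\,dy.
\end{equation*}
Because $h$ vanishes on $\partial\Rnn^2$ by~\eqref{hyp:hvanish}, the hypothesis $h\not\equiv 0$ precludes $\partial_u\partial_v h\equiv 0$ on $\Rp^2$: otherwise $h$ would decompose as $a(u)+b(v)$, and the boundary condition would force $a,b$ to be constant and hence $h\equiv 0$. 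Fix $(u_0,v_0)\in\Rp^2$ with, say, $\partial_u\partial_v h(u_0,v_0)>0$ (otherwise reverse the sign of one of the oscillations below).

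The final step is to construct $u^\omega,v^\omega$ with strongly correlated high-frequency oscillations concentrated where $(u^\omega,v^\omega)\approx(u_0,v_0)$. Pick smooth compactly-supported densities $\bar u,\bar v\in\prbtwo$ and a cutoff $\chi\in C_c^\infty(\R^d)$ whose support lies in a region where $\bar u\equiv u_0$ and $\bar v\equiv v_0$, and set
\begin{equation*}
  u^\omega \;=\; Z_\omega^{-1}\bigl[\bar u+\omega^{-1/2}\bar u\,\chi\cos(\omega x_1)\bigr],\qquad v^\omega \;=\; (Z'_\omega)^{-1}\bigl[\bar v+\omega^{-1/2}\bar v\,\chi\cos(\omega x_1)\bigr],
\end{equation*}
with normalizing factors $Z_\omega,Z'_\omega=1+O(\omega^{-N})$ for every $N$ (by stationary phase / integration by parts). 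For $\omega$ large these are admissible smooth densities in $\prbtwo$, and $\partial_1 u^\omega\sim-\omega^{1/2}u_0\,\chi\sin(\omega x_1)$ to leading order on the support of $\chi$, and analogously for $v^\omega$. Substituting and using that $\sin^2(\omega x_1)$ averages to $1/2$ gives
\begin{equation*}
  \frac{d^2}{ds^2}\bigg|_{s=0}\int H_\eps(T_s\#u^\omega,v^\omega)\,dx \;\le\; M-\tfrac12\eps\omega\,(\partial_u\partial_v h)(u_0,v_0)\,u_0 v_0\int\chi^2\,dy+o(\omega),
\end{equation*}
which yields~\eqref{eq:nonconvex} with an appropriate $C>0$ once $\omega$ is sufficiently large; for small $\omega$ the right-hand side of~\eqref{eq:nonconvex} is nonnegative, so any admissible choice works.

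The main delicate point is the balance of scales: the amplitude $\omega^{-1/2}$ is precisely the threshold that keeps $u^\omega,v^\omega$ non-negative and of unit mass yet lets their first derivatives oscillate with amplitude $\omega^{1/2}$, producing net growth $\omega$ in the displayed product. It is also structurally important that the integration by parts eliminates the $\partial_v^2 h\,(\partial_1 v^\omega)^2$ term, whose sign is uncontrolled and which would otherwise dominate at scale $\omega$ with indeterminate sign; the sign-controlled $\partial_u\partial_v h$ contribution, by contrast, is the genuine witness that the coupling $h$ destroys displacement convexity.
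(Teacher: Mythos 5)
Your proposal is correct and follows essentially the same route as the paper's proof: translate one component along a geodesic, integrate by parts to reduce the second derivative to $-\int \partial_{uv}h\,\partial_{x_1}u^\omega\,\partial_{x_1}v^\omega\,\dn x$, and insert correlated oscillations of amplitude $\omega^{-1/2}$ localized where $\partial_{uv}h\neq 0$, flipping the sign of one oscillation if needed. The only differences are cosmetic (cosine with a cutoff and normalizing constants instead of the paper's odd perturbation $\omega^{-1/2}\delta_r\sin(\omega x_1)$, which preserves mass exactly, plus your explicit argument that $\partial_{uv}h\not\equiv0$), and the harmless extra constant $M$ in your final bound.
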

Note that, by Lemma~\ref{lem:doublegeodesic}, the curve $(T_s\#u^\omega,v^\omega)_{0\le s\le1}$
is a geodesic in $\prbtwotwo$.
\begin{remark}
  With a little technical effort,
  the construction in the proof below can be used to show that such pairs $(u_\omega,v_\omega)$
  are actually dense in $\prbtwotwo$.
  For the sake of clarity, we only give the construction for one such pair.
\end{remark}
\begin{proof}
  In view of $h\equiv0$ on $\partial\Rnn^2$ because of the degeneracy condition \eqref{hyp:hvanish},
  $h$ not being identically zero implies the existence of a some $(U,V)\in\Rp^2$ such that $\partial_{uv}h(U,V)\neq0$.
  For the construction below, we assume $\partial_{uv}h(U,V)>0$, and we comment on the other case at the end of the proof.
  Choose $u^0,v^0\in\prbtwo\cap C^\infty_c(\R^d)$ such that
  $u^0(x)=U$ and $v^0(x)=V$ for all $|x|<r$, with some sufficiently small $r>0$.
  For all sufficiently large $\omega>0$, define $u^\omega,v^\omega\in\prbtwo\cap C^\infty_c(\R^d)$
  by
  \begin{align}
    \label{eq:vomega}
    u^\omega(x) = u^0(x)+\omega^{-1/2}\delta_r(x)\sin(\omega x_1),
    \quad
    v^\omega(x) = v^0(x)+\omega^{-1/2}\delta_r(x)\sin(\omega x_1),
  \end{align}
  where $\delta_r\in C^\infty_c(\R^d)$ is radially symmetric about the origin,
  with $\delta_r(x)=1$ for $|x|<r/2$ and $\delta_r(x)=0$ for $|x|>r$.
  For the integral in~\eqref{eq:nonconvex}, we obtain via an integration by parts:
  \begin{align*}
   \frac{\dn^2}{\dd s^2} \bigg|_{s=0}\intrd H_\eps(T_s\#u^\omega,v^\omega)\dd x
    &= \intrd \big[\partial_uH_\eps(u^\omega,v^\omega)\partial_{x_1x_1}u^\omega
      + \partial_{uu}H_\eps(u^\omega,v^\omega)\big(\partial_{x_1}u^\omega\big)^2\big]\dd x\\
    &= -\intrd\partial_{uv}H_\eps(u^\omega,v^\omega)\partial_{x_1}u^\omega\partial_{x_1}v^\omega\dd x.
  \end{align*}
  By construction, the contribution of this integral over $|x|<r$
  is roughly proportional to~$\omega$, with a negative sign since $\partial_{uv}H_\eps(U,V)>0$.
  The contribution on $|x|>r$ has some finite value, independent of~$\omega$. 

  Now if $\partial_{uv}h(U,V)$ is negative instead of positive,
  we only change the definition of $v^\omega$ in \eqref{eq:vomega} above into
  \begin{align*}
    v^\omega(x) = v^0(x)-\omega^{-1/2}\delta_r(x)\sin(\omega x_1),    
  \end{align*}
  which makes the product $\partial_{x_1}u^\omega\partial_{x_1}v^\omega$ negative instead of positive for $|x|<r/2$.
\end{proof}


\section{Stationary solutions}
\label{sct:stationary}

In this section, Theorem~\ref{thm:stationary} is proven.
It is a consequence of the (more detailed) results stated in Propositions~\ref{prp:steady} and~\ref{prp:regularity} below,
For brevity, define $H_\eps \colon \Rnn^2\to\R$ by
\begin{equation}
\label{eqn_def_H}
  H_\eps (u,v) \coloneqq  F(u) + G(v) + \eps h(u,v),
\end{equation}
which allows to write
\begin{align*}
  \nrg_\eps(u,v) = \intrd \big[H_\eps(u,v) + u\Phi+v\Psi\big]\dd x.
\end{align*}

\begin{remark}
  \label{remark_H_eps}
  We notice some important properties of the function $H_\eps \colon \Rnn^2 \to \R$ for $\eps\in[0,\eps^*]$, which are used in this section.
  First, by hypothesis~\eqref{hyp:hconvex},~$H_\eps$ is non-negative, strictly convex and satisfies the explicit convexity estimate
  \begin{equation}
    \label{eq:Hqual}
    \dff^2 H_\eps(u,v) \ge \frac12 \begin{pmatrix} F''(u) & 0 \\ 0 & G'(v) \end{pmatrix} \quad \text{for all } (u,v) \in \Rp^2 .
  \end{equation}
  Second, as a consequence of the strict convexity of~$H_\eps$,
  its differential~$\dff H_\eps$ is a strictly monotone continuous map on the cone~$\Rnn^2$,
  i.e., it satisfies $(\dff H_\eps(u,v)-\dff H_\eps(\tilde{u},\tilde{v}))\cdot(u-\tilde{u}, v - \tilde{v})>0$ for all $(u,v), (\tilde{u},\tilde{v})\in\Rnn^2$ with $(u,v) \neq (\tilde{u},\tilde{v})$.
  In view of the identities $\dff H_\eps(u,0)=(F'(u),0)$ and $\dff H_\eps(0,v)=(0,G'(v))$,
  and since $F'$ and $G'$ are monotone and unbounded with $F'(0)=G'(0)=0$, the image of~$\dff H_\eps$ is~$\Rnn^2$.
  Hence,~$\dff H_\eps$ is a homeomorphism of~$\Rnn^2$ onto itself,
  and also a homeomorphism of $\Rp^2$ onto itself.
\end{remark}
In the following, $\bar U\ge2$ denotes the smallest number such that
\begin{align}
  \label{eq:A}
  \frac12F'(\bar U) \ge F'(2) + dM + \eps^* \sup_{uv\le2}\partial_uh(u,v) \quad\text{and}\quad
  \frac12G'(\bar U) \ge G'(2) + dM + \eps^* \sup_{u,v\le2}\partial_vh(u,v),
\end{align}
with~$M$ from~\eqref{hyp:Phi}.
\begin{proposition}
  \label{prp:steady}
  Let $\eps\in[0,\eps^*]$. There exists a unique global minimizer $(\bar u_\eps,\bar v_\eps)$ of~$\nrg_\eps$.
  The components~$\bar u_\eps$,~$\bar v_\eps$ are continuous functions of compact support,
  bounded by~$\bar U$ from~\eqref{eq:A}.
  Moreover, there are constants $U_\eps,\,V_\eps>0$
  such that $\bar u_\eps,\,\bar v_\eps$ satisfy
  \begin{equation}
    \label{eq:EL}
    \begin{split}
      F'(\bar u_\eps) + \eps\partial_uh(\bar u_\eps,\bar v_\eps) &= (U_\eps - \Phi)_+\,, \\
      G'(\bar v_\eps) + \eps\partial_vh(\bar u_\eps,\bar v_\eps) &= (V_\eps - \Psi)_+\,.
    \end{split}
  \end{equation}
  The supports of~$\bar u_\eps$ and~$\bar v_\eps$ are convex and given
  by the closures of the sublevel sets
  \begin{align}
    \label{eq:supps}
    \spt{u} \coloneqq  \{\Phi< U_\eps\},
    \qquad
    \spt{v} \coloneqq  \{\Psi< V_\eps\},
  \end{align}
  respectively.
  Finally, there is an upper bound on~$U_\eps$ and~$V_\eps$,
  and also on the diameters of~$\spt{u}$ and~$\spt{v}$,
  uniformly for $0\le\eps\le\eps^*$.
\end{proposition}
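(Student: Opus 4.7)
My plan is to apply the direct method in the topology of weak $L^1$-convergence. By \eqref{hyp:hbound} together with $F,G,\Phi,\Psi\ge0$, the functional satisfies
\begin{align*}
\nrg_\eps(u,v)\ge\tfrac12\intrd\big[F(u)+G(v)\big]\dd x+\intrd\big[u\Phi+v\Psi\big]\dd x\ge0,
\end{align*}
so its infimum is finite. Along a minimizing sequence $(u_n,v_n)$, the bound $\int u_n\Phi\dd x\le 2\nrg_\eps(u_n,v_n)$ together with the quadratic lower bound in \eqref{eq:Phiupbound} controls second moments and yields tightness; the superlinear growth \eqref{hyp:quadratic_growth} combined with the uniform bound on $\int F(u_n)\dd x$ provides equi-integrability, so a subsequence converges weakly in $L^1$ to a limit $(\bar u_\eps,\bar v_\eps)$ whose mass is preserved thanks to tightness. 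Lower semicontinuity along this weak limit follows for the integrand $H_\eps$ from its convexity (Remark~\ref{remark_H_eps}) and for the potential terms from Fatou's lemma. Uniqueness then follows immediately from the strict convexity of $H_\eps$ noted in Remark~\ref{remark_H_eps}, since the midpoint of two distinct minimizers would have strictly lower energy.

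\textbf{Euler--Lagrange equations and supports.} Next I would test minimality against mass-preserving perturbations $(\bar u_\eps+t(\varphi-\psi),\bar v_\eps)$ with $\varphi,\psi\ge0$ compactly supported, $\int\varphi\dd x=\int\psi\dd x$, and $\bar u_\eps+t(\varphi-\psi)\ge0$ for small $|t|$. The first variation gives
\begin{align*}
\intrd\big[F'(\bar u_\eps)+\Phi+\eps\partial_uh(\bar u_\eps,\bar v_\eps)\big](\varphi-\psi)\dd x\ge 0,
\end{align*}
and a standard selection argument produces a Lagrange multiplier $U_\eps\in\R$ such that the bracket equals $U_\eps$ on $\{\bar u_\eps>0\}$ and is $\ge U_\eps$ elsewhere. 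On $\{\bar u_\eps=0\}$, \eqref{eq:degeneracy} and \eqref{hyp:hvanish} give $F'(0)=0=\partial_uh(0,\bar v_\eps)$, so the off-support inequality reduces to $\Phi\ge U_\eps$; combining both cases yields \eqref{eq:EL} and identifies $\{\bar u_\eps>0\}=\{\Phi<U_\eps\}$, whose convexity follows from that of $\Phi$. An analogous argument handles $\bar v_\eps$.

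\textbf{Uniform bounds.} A uniform energy bound $\nrg_\eps(\bar u_\eps,\bar v_\eps)\le C$ independent of $\eps\in[0,\eps^*]$ is obtained by inserting a fixed test pair, e.g.\ normalized indicators of unit balls centred at $\underline x_\Phi$ and $\underline x_\Psi$, whose energy is finite by \eqref{eq:Phiupbound} and \eqref{hyp:hbound}. Integrating \eqref{eq:EL} against $\bar u_\eps$ and invoking \eqref{eq:tripling} to control $\int\bar u_\eps F'(\bar u_\eps)\dd x\le 1+2\int F(\bar u_\eps)\dd x$, together with \eqref{hyp:hbound} to bound the coupling integral, yields an $\eps$-independent upper bound on $U_\eps$, and hence on the diameter of $\spt{u}\subset\{\Phi<U_\eps\}\subset\ball_{\sqrt{2U_\eps/\Lambda}}(\underline x_\Phi)$ via \eqref{eq:Phiupbound}. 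For the pointwise bound $\bar u_\eps\le\bar U$, I would argue by contradiction via mass redistribution: assuming $\bar u_\eps>\bar U$ on a set of positive measure, the constraint $\int\bar u_\eps\dd x=1$ forces $\bar u_\eps\le 2$ on a further region within the support; transferring an infinitesimal mass between the two regions produces an energy variation whose sign is controlled by the balance in \eqref{eq:A}, where the term $dM$ bounds the oscillation of $\Phi$ over the transport path (via $\nabla^2\Phi\le M\eins$) and the remaining terms account for the jump in the chemical potential $F'(\bar u_\eps)+\eps\partial_uh$ between the high- and moderate-density zones.

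\textbf{Main obstacle.} The technical heart is the $L^\infty$-bound with the explicit constant $\bar U$. The coupling $\eps\partial_uh(\bar u_\eps,\bar v_\eps)$ intertwines the two components in the Euler--Lagrange system, so a scalar maximum principle for $\bar u_\eps$ alone is not available; instead, one must reduce to a local energy comparison where continuity of $\partial_uh$ on compact subsets of $\Rnn^2$ is exploited, and the precise constants in \eqref{eq:A} are calibrated so as to render this comparison strictly negative whenever $\bar u_\eps>\bar U$. A secondary source of care is the $\eps$-uniformity of every constant appearing along the way, ensured throughout by the structural estimate \eqref{hyp:hbound}.
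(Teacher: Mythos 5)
Your overall route is the same as the paper's (direct method for existence and uniqueness, mass-preserving variations with a Lagrange multiplier for the Euler--Lagrange system, and a mass-redistribution competitor calibrated by \eqref{eq:A} for the $L^\infty$-bound), but two conclusions of the proposition are never actually reached. First, you do not prove continuity of $\bar u_\eps$ and $\bar v_\eps$ at all. This is part of the statement and is not automatic for an $L^2$-minimizer defined a.e.; the paper obtains it by rewriting \eqref{eq:EL} as $\dff H_\eps(\bar u_\eps,\bar v_\eps)=\big((U_\eps-\Phi)_+,(V_\eps-\Psi)_+\big)$ and using that $\dff H_\eps$ is a homeomorphism of $\Rnn^2$ onto itself (strict monotonicity from \eqref{eq:Hqual} together with the boundary identities $\dff H_\eps(u,0)=(F'(u),0)$, $\dff H_\eps(0,v)=(0,G'(v))$, see Remark~\ref{remark_H_eps}). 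Some argument of this kind is needed; note also that your own steps implicitly use pointwise information (the receiving region with $\bar u_\eps\le2$, and any evaluation near $\underline x_\Phi$), so continuity is not a cosmetic add-on.

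Second, the identification of the supports is asserted but only half-proved. Your two cases give: the bracket equals $U_\eps$ a.e.\ on $\{\bar u_\eps>0\}$, and $\Phi\ge U_\eps$ a.e.\ on $\{\bar u_\eps=0\}$. This yields $\{\Phi<U_\eps\}\subseteq\{\bar u_\eps>0\}$ up to null sets, but not the reverse inclusion, and without it you cannot write the right-hand side of \eqref{eq:EL} as a positive part: where $\bar u_\eps>0$ you must know $U_\eps-\Phi=F'(\bar u_\eps)+\eps\partial_uh(\bar u_\eps,\bar v_\eps)\ge0$, and $\partial_u h$ may well be negative. The missing ingredient is a sign argument based on \eqref{hyp:hconvex} and \eqref{hyp:hvanish}: for instance, convexity of $t\mapsto H_\eps(t,v)$ with $\partial_uH_\eps(0,v)=0$ and the lower bound \eqref{eq:Hqual} give $\partial_uH_\eps(u,v)\ge\tfrac12F'(u)>0$ for $u>0$; the paper argues equivalently that $\dff H_\eps$ maps $\Rp^2$ into itself. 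Two smaller slips: \eqref{hyp:hbound} controls $|h|$, not $\partial_uh$, so the coupling integral in your bound for $U_\eps$ needs either the $L^\infty$-bound (so $\partial_uh$ is bounded on $[0,\bar U]^2$) or a convexity estimate, not \eqref{hyp:hbound} alone; and in the $L^\infty$-argument the term $dM$ in \eqref{eq:A} is not "the oscillation of $\Phi$ along the transport path" (which is unbounded, since the donor set may lie where $\Phi$ is huge — harmless only because mass is removed there), but the bound from \eqref{eq:Phiupbound} for $\Phi$ on a fixed cube of volume $3$ around $\underline x_\Phi$, inside which one must select a unit-measure receiving set where both $\bar u_\eps\le1$ and $\bar v_\eps\le1$, so that $\sup_{u,v\le2}\partial_uh$ in \eqref{eq:A} applies after adding the mass $\sigma\le1$.
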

\begin{remark}
  Thanks to hypothesis~\eqref{hyp:hvanish}, we have $\partial_uh(u,0)=\partial_vh(0,v)=0$,
  and thus the system~\eqref{eq:EL} can be made a bit more explicit:
  On $\spt{u}\setminus\spt{v}$, one has $\bar u_\eps = (F')^{-1}(U_\eps-\Phi)$,
  on $\spt{v}\setminus\spt{u}$, one has $\bar v_\eps = (G')^{-1}(V_\eps-\Psi)$.
  Finally, on $\spt{u}\cap\spt{v}$, the values of~$\bar u_\eps$ and~$\bar v_\eps$
  are obtained as pointwise solution of~\eqref{eq:EL},
  with right-hand sides $U_\eps-\Phi>0$ and $V_\eps-\Psi>0$.
\end{remark}
\begin{remark}
  The explicit representation~\eqref{eq:supps} of the supports is related to hypothesis~\eqref{hyp:hvanish},
  specifically to
  \begin{align}
    \label{eq:morevil}
    \partial_uh(0,v) = \partial_vh(v,0) = 0.
  \end{align}
  If just the part~\eqref{eq:morevil} of our set of hypotheses was removed,
  then the conclusions of Proposition~\ref{prp:steady} are essentially still valid,
  but the supports of~$\bar u_\eps$ and~$\bar v_\eps$ are only \emph{subsets}
  of the respective sublevel sets of~$\Phi$ and~$\Psi$ in general.

  For an illustration of this situation,
  consider the choice $F(u)=\frac{u^2}2$, $G(v)=\frac{v^2}2$ and $h(u,v)=uv$, for which~\eqref{eq:morevil} is false.
  Proceeding as in the proof of Proposition~\ref{prp:steady} below,
  one obtains existence and uniqueness of a minimizer with densities $(\bar u_\eps,\bar v_\eps)$,
  and~\eqref{eq:EL} turns into a \emph{linear} system for the values of~$\bar u_\eps$ and~$\bar v_\eps$ on the intersection of their supports.
  Thereon, the explicit solution is given by
  \begin{align*}
    (1-\eps^2)\bar u_\eps= (U_\eps-\Phi)_+-\eps(V_\eps-\Psi)_+,
    \quad
    (1-\eps^2)\bar v_\eps= (V_\eps-\Psi)_+-\eps(U_\eps-\Phi)_+.
  \end{align*}
  From this representation it is clear that if the respective sublevel sets of~$\Phi$ and~$\Psi$ overlap, then the supports of~$\bar u_\eps$ and~$\bar v_\eps$ are genuinely smaller. 
\end{remark}
\begin{proof}[Proof of Proposition~\ref{prp:steady}]
  Thanks to the assumed at least quadratic growth of~$F$ and~$G$,
  the qualified convexity~\eqref{eq:Hqual} and the $\Lambda$-convexity of~$\Phi$,~$\Psi$
  imply that
  \begin{align*}
    \nrg_\eps(u,v) \ge c\intrd (u^2+v^2)\dd x + \frac\Lambda 2\intrd |x|^2(u+v)\dd x - C
  \end{align*}
  for all $(u,v) \in [L^2(\R^d)]^2 \cap \prbtwotwo$, with some constants~$C$ and $c>0$.
  That is, the sublevel sets of~$\nrg_\eps$ are compact in $\prbtwotwo$, and weakly compact in $[L^2(\R^d)]^2$.
  Further, strict convexity of~$H_\eps$ and of~$\Phi$,~$\Psi$ imply strict convexity of~$\nrg_\eps$.
  Existence and uniqueness of the global minimizer $(\bar u_\eps,\bar v_\eps) \in [L^2(\R^d)]^2 \cap \prbtwotwo$
  now follow via the direct methods of the calculus of variations.
  
  Next, we verify that~$\bar U$ is an upper bound
  by showing that if~$\bar u_\eps$ or~$\bar v_\eps$ would exceed~$\bar U$,
  then there is a competitor $(\tilde u_\eps,\tilde v_\eps)$, bounded by~$\bar U$, of a lower $\nrg_\eps$-score.
  Assume that $\bar u_\eps> \bar U$ on a set $S\subset\R^d$ of positive Lebesgue measure;
  if $\bar u_\eps\le \bar U$ a.e.~but $\bar v_\eps> \bar U$, the argument is analogous.
  Define
  \begin{align}
    \label{eq:sig}
    \sigma \coloneqq  \int_S (\bar u_\eps-\bar U)\dd x \in(0,1).
  \end{align}
  Consider the cube $Q\subset\R^d$ of volume $V=3$ (i.e., of side length $3^{1/d}$),
  centered around the minimum point~$\underline x_\Phi$ of~$\Phi$.
  Since~$\bar u_\eps$ and~$\bar v_\eps$ are of unit mass,
  the subsets of~$Q$ on which $\bar u_\eps\ge1$ or $\bar v_\eps\ge1$, respectively,
  are of measure at most one.
  Hence, there is a set $T\subset Q$ of unit Lebesgue measure on which $\bar u_\eps\le1$ and $\bar v_\eps\le1$.
  Since $\bar U\ge2$, the sets~$S$ and~$T$ are disjoint.
  We define~$\tilde u_\eps$ as a modification of~$\bar u_\eps$ as follows:
  we set $\tilde u_\eps\coloneqq \bar U$ on~$S$,
  we set $\tilde u_\eps\coloneqq \bar u_\eps+\sigma$ on~$T$,
  and we set $\tilde u_\eps\coloneqq \bar u_\eps$ otherwise.
  By definition of~$\sigma$, and since~$T$ is of unit measure,~$\tilde u_\eps$ is a probability density,
  and thus $(\tilde u_\eps,\bar v_\eps)$ is an admissible competitor.
  On the one hand, on~$S$, where $\tilde u_\eps=\bar U\le\bar u_\eps$,
  \begin{align*}
    H_\eps(\bar u_\eps,\bar v_\eps) - H_\eps(\tilde u_\eps,\bar v_\eps)
    \ge (\bar u_\eps-\tilde u_\eps)\partial_uH_\eps(\tilde u_\eps,\bar v_\eps)
    \ge \frac12 (\bar u_\eps-\bar U) F'(\bar U),
 \end{align*}
 using the convexity estimate~\eqref{eq:Hqual}, the degeneracy~\eqref{eq:degeneracy} of~$F$ and~\eqref{hyp:hvanish} of~$h$.
 Hence, recalling the definition~\eqref{eq:sig} of~$\sigma$ and the non-negativity of~$\Phi$,
  \begin{align*}
    \int_S \big(H_\eps(\bar u_\eps,\bar v_\eps) + \bar u_\eps\Phi + \bar v_\eps \Psi\big)\dd x
    \ge \int_S \big(H_\eps(\tilde u_\eps,\bar v_\eps) + \tilde u_\eps \Phi + \bar v_\eps \Psi\big)\dd x
    + \frac12F'(\bar U) \sigma.
  \end{align*}
  On the other hand, on~$T$, where $\tilde u_\eps=\bar u_\eps+\sigma\le2$,
  \begin{align*}
    H_\eps(\bar u_\eps,\bar v_\eps) - H_\eps(\tilde u_\eps,\bar v_\eps)
    \ge \big(\bar u_\eps-\tilde u_\eps\big)\partial_uH_\eps(\tilde u_\eps,\bar v_\eps)
    \ge -\Big[F'(2)+ \eps^* \sup_{a,b\le2}\partial_uh(a,b)\Big]\sigma.
  \end{align*}
  With~$T$ being of unit measure,
  and recalling that $0\le\Phi(x)\le \frac M2|x-\underline x_\Phi|^2\le  (3/2)^{2/d}dM/2\le dM$ for all $x\in T$ thanks to~\eqref{eq:Phiupbound},
  it follows that
  \begin{align*}
    \int_T \big(H_\eps(\bar u_\eps,\bar v_\eps) + \bar u_\eps \Phi + \bar v_\eps \Psi\big)\dd x
    &\ge \int_T \big(H_\eps(\tilde u_\eps,\bar v_\eps) + \tilde u_\eps \Phi + \bar v_\eps \Psi\big)\dd x \\
    &\qquad -\Big[ dM+F'(2)+\eps^* \sup_{a,b\le2}\partial_uh(a,b)\Big]\sigma.
  \end{align*}
  In summary, recalling the implicit definition~\eqref{eq:A} of~$\bar U$, we find
  \begin{align*}
    \nrg_\eps(\bar u_\eps,\bar v_\eps)
    \ge \nrg_\eps(\tilde u_\eps,\bar v_\eps) + \sigma.
  \end{align*}
  This contradicts the minimality of $(\bar u_\eps,\bar v_\eps)$.
  Consequently, we have $\bar u_\eps\le \bar U$ and $\bar v_\eps\le \bar U$.
  
  To characterize the minimizer, we perform variations of the form
  \begin{align*}
    \bar u_\eps^s = (1-\alpha s)\bar u_\eps+s\xi,
    \qquad
    \bar v_\eps^s = (1-\beta s)\bar v_\eps+s\eta,
  \end{align*}
  with appropriate functions $\xi,\eta\in L^\infty(\R^d)$ of compact support,
  and parameters
  \begin{align*}
    \alpha = \intrd \xi\dd x, \qquad \beta = \intrd\eta\dd x.
  \end{align*}
  We define
  \begin{align*}
    U_\eps \coloneqq  \intrd \big[\partial_u H_\eps(\bar u_\eps,\bar v_\eps)+\Phi\big]\bar u_\eps\dd x,
    \quad
    V_\eps \coloneqq  \intrd \big[\partial_v H_\eps(\bar u_\eps,\bar v_\eps)+\Psi\big]\bar v_\eps\dd x.
  \end{align*}  
  First, let $\xi,\eta\in C_c(\R^d)$ be non-negative.
  Then $\bar u_\eps^s,\,\bar v_\eps^s\in\prbtwo$ for all $s\ge0$ sufficiently small,
  and
  \begin{align*}
    0&\le \lim_{s\downarrow0}\frac{\nrg_\eps(\bar u_\eps^s,\bar v_\eps^s)-\nrg_\eps(\bar u_\eps,\bar v_\eps)}s \\
     &= \intrd \big[\partial_u H_\eps(\bar u_\eps,\bar v_\eps)+\Phi\big]\xi\dd x
       + \intrd\big[\partial_v H_\eps(\bar u_\eps,\bar v_\eps)+\Psi\big]\eta\dd x
       -\alpha U_\eps - \beta V_\eps.
  \end{align*}
  This shows that, a.e.~on~$\R^d$,
  \begin{equation}
    \label{eq:EL2}
    \begin{split}
      F'(\bar u_\eps)+\eps\partial_uh(\bar u_\eps,\bar v_\eps)+\Phi &\ge U_\eps, \\
      G'(\bar v_\eps)+\eps\partial_vh(\bar u_\eps,\bar v_\eps)+\Psi &\ge V_\eps.    
    \end{split}
  \end{equation}
  Consequently, with $\partial_uh(0,v)=0$ for all $v\ge0$ by hypothesis~\eqref{hyp:hvanish} and the fact that~$F$ is degenerate at zero to first order by hypothesis~\eqref{eq:degeneracy},
  we necessarily have $\bar u_\eps>0$ a.e.~on $\{U_\eps>\Phi\}$,
  and similarly $\bar v_\eps>0$ a.e.~on $\{V_\eps>\Psi\}$.

  Next let $\xi,\eta\in C_c(\R^d)$ be arbitrary.
  For any $\delta>0$,
  we may perform the aforementioned variations with
  \begin{align*}
    \xi_\delta \coloneqq 
    \begin{cases}
      \xi & \text{where $\bar u_\eps>\delta$} \\
      0 & \text{otherwise}
    \end{cases},
          \quad
          \eta_\delta \coloneqq 
          \begin{cases}
            \eta & \text{where $\bar v_\eps>\delta$} \\
            0 & \text{otherwise}
          \end{cases},
  \end{align*}
  even for all $-s\ge0$ sufficiently small,
  and can thus conclude the opposite inequalities in~\eqref{eq:EL2}.
  This means that equality holds in the first inequality in~\eqref{eq:EL2} a.e.~on $\{\bar u_\eps>0\}$,
  and in the second inequality a.e.~on $\{\bar v_\eps>0\}$.

  We next consider the case that $\bar u_\eps>0$ and $\bar v_\eps=0$.
  Then $F'(\bar u_\eps)>0$, and since $\partial_uh(\bar u_\eps,\bar v_\eps)=0$ by hypothesis~\eqref{hyp:hvanish},
  it follows that $\Phi<U_\eps$.
  Analogously, $\bar v_\eps>0$ and $\bar u_\eps=0$ implies $\Psi<V_\eps$.
  Finally, suppose $\bar u_\eps>0$ and $\bar v_\eps>0$,
  so that~\eqref{eq:EL2} becomes a system of two equations.
  Recalling the definition of~$H_\eps$ above,
  that system can be written as a single vectorial equation as follows:
  \begin{align*}
    \dff H_\eps(\bar u_\eps,\bar v_\eps) =
    \begin{pmatrix} U_\eps - \Phi \\ V_\eps-\Psi \end{pmatrix}.
  \end{align*}
  By Remark \ref{remark_H_eps},
  the positive cone~$\Rp^2$ is mapped into itself under~$\dff H_\eps$.
  Therefore, $\bar u_\eps>0$ and $\bar v_\eps>0$ implies that $\Phi<U_\eps$ and $\Psi<U_\eps$, respectively.
  Consequently, the positivity sets of~$\bar u_\eps$ and~$\bar v_\eps$
  are indeed given by the sublevel sets~$\spt{u}$ and~$\spt{v}$ from~\eqref{eq:supps}, respectively.
  
  To sum up: a.e.~on $\{\Phi<U_\eps\}$,
  we have $\bar u_\eps>0$ and equality in the first inequality of~\eqref{eq:EL2};
  and a.e.~on the complement $\{\Phi\ge U_\eps\}$, we have $u_\eps=0$,
  which, thanks to $F'(0)=0$ and $\partial_uh(0,\bar v_\eps)=0$,
  can be written as
  \begin{align*}
    F'(\bar u_\eps)+\eps\partial_uh(\bar u_\eps,\bar v_\eps)= 0.
  \end{align*}
  This (and an analogous argument for~$\bar v_\eps$) implies~\eqref{eq:EL}.

  Next, concerning the uniform boundedness of~$U_\eps$ and~$V_\eps$:
  it suffices to observe that, in view of $\bar u_\eps,\bar v_\eps\le \bar U$,
  \begin{align*}
    \partial_u H_\eps\big(\bar u_\eps,\bar v_\eps)
    \le C\coloneqq F'(\bar U) + \eps^* \sup_{a,b\le \bar U}h(a,b),
  \end{align*}
  and further that at $x=\underline x_\Phi$,
  \begin{align*}
    U_\eps = U_\eps-\Phi(\underline x_\Phi) = \partial_u H_\eps\big(\bar u_\eps(\underline x_\Phi),\bar v_\eps(\underline x_\Phi)\big).
  \end{align*}
  In combination, this yields $U_\eps\le C$, and we can argue analogously for~$V_\eps$.  
  We further notice that the uniform boundedness of the sets~$\spt{u}$ and~$\spt{v}$ is a direct consequence of the estimates in~\eqref{eq:Phiupbound} and the uniform bound on the constants~$U_\eps$ and~$V_\eps$.
  
  Finally, we verify that the $L^2$-representatives of~$\bar u_\eps$ and~$\bar v_\eps$
  that are given as pointwise solution of~\eqref{eq:EL} are continuous.
  To that end, we write~\eqref{eq:EL} as
  \begin{align}
    \label{eq:Hsys}
    \dff H_\eps(u_\eps,v_\eps) =
    \begin{pmatrix}
      (U_\eps-\Phi)_+ \\ (V_\eps-\Psi)_+
    \end{pmatrix}.
  \end{align}
  Since~$\dff H_\eps$ has a continuous inverse, cp.~Remark~\ref{remark_H_eps}, and since the functions on the right-hand side of~\eqref{eq:Hsys} are continuous on~$\R^d$,
  so are the solutions~$\bar u_\eps$ and~$\bar v_\eps$.
\end{proof}
\begin{proposition}
  \label{prp:regularity}
  In addition to the general hypotheses, assume that $(F,G,h)$ is $k$-degenerate for some $k\in\N$.
  Then $F'(\bar u_\eps)$ and $G'(\bar v_\eps)$ are~$k$ times continuously differentiable
  in~$\spt{u}$ and in~$\spt{v}$, respectively.
  Moreover, all partial derivatives $\partial^\alpha F'(\bar u_\eps)$ and $\partial^\alpha G'(\bar v_\eps)$ of order $|\alpha|\le k$
  are bounded on~$\spt{u}$ and~$\spt{v}$, respectively, uniformly in $\eps\in[0,\eps^*]$.
\end{proposition}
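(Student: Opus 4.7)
The plan is to rewrite the Euler--Lagrange system~\eqref{eq:EL} in terms of the auxiliary variables $\rho \coloneqq F'(\bar u_\eps)$ and $\eta \coloneqq G'(\bar v_\eps)$. Recalling~\eqref{eq:deftheta}, the system becomes
\begin{equation*}
S_\eps(\rho,\eta) = \big((U_\eps-\Phi)_+,\,(V_\eps-\Psi)_+\big), \qquad S_\eps(\rho,\eta) \coloneqq \big(\rho+\eps\theta_u(\rho,\eta),\, \eta+\eps\theta_v(\rho,\eta)\big).
\end{equation*}
Since $F'$ and $G'$ are $C^\infty$-diffeomorphisms of $\Rnn$ onto itself, it suffices to establish $C^k$-regularity, uniformly in $\eps\in[0,\eps^*]$, of $\rho(x)=\pi_1\bigl(S_\eps^{-1}((U_\eps-\Phi(x))_+,(V_\eps-\Psi(x))_+)\bigr)$ on $\spt{u}$, and analogously for $\eta$ on $\spt{v}$.

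First I would show that $S_\eps$ extends to a global $C^k$-diffeomorphism of $\R^2$. Extending $\theta_u,\theta_v$ by zero to $\R^2\setminus\Rnn^2$ produces $C^k(\R^2)$ functions thanks to $k$-degeneracy. The chain rule applied to the change of variables $(\rho,\eta)=(F'(u),G'(v))$ yields $DS_\eps = \dff^2 H_\eps\cdot\mathrm{diag}(1/F''(u),1/G''(v))$, so that the convexity estimate~\eqref{eq:Hqual} for $H_\eps$ implies
\begin{equation*}
\det DS_\eps = \frac{\det \dff^2 H_\eps}{F''(u)\,G''(v)} \ge \frac14 \quad\text{on } \Rp^2,
\end{equation*}
while trivially $DS_\eps=I$ outside $\Rp^2$. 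Since the entries of $D\theta$ are continuous and $(\rho,\eta)$ lies in the compact region $[0,F'(\bar U)]\times[0,G'(\bar U)]$ by Proposition~\ref{prp:steady}, the matrix $DS_\eps$ is uniformly invertible, and the global inverse function theorem supplies a $C^k$-inverse $R_\eps\colon\R^2\to\R^2$ with $C^k$-norms bounded uniformly in $\eps\in[0,\eps^*]$.

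Next I would handle the non-smoothness of the $(\cdot)_+$-truncation in the right-hand side. The vanishing condition~\eqref{hyp:hvanish} gives $\theta_u\equiv\theta_v\equiv0$ on both coordinate axes, so $S_\eps$ fixes each axis, and the same holds for $R_\eps$. Writing $R_\eps = \mathrm{id}+(g_1,g_2)$, one has $g_1\equiv0$ on $\{\eta=0\}$ and $g_2\equiv0$ on $\{\rho=0\}$. The crucial claim is that \emph{all} partial derivatives of $g_1$ of order at most $k$ vanish on $\{\eta=0\}$. This is proved by induction on the differentiation order: differentiating the identity $S_\eps\circ R_\eps = \mathrm{id}$ and expanding via Faà~di~Bruno's formula, every surviving summand carries a partial derivative of $\theta_u$ or $\theta_v$ of order at most $k$, which vanishes on the axes by $k$-degeneracy. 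Hence the extension of $g_1$ by zero to $\{\eta<0\}$ is of class $C^k(\R^2)$, yielding a function $\tilde g_1$ with $\tilde g_1(\rho,\eta)=g_1(\rho,\eta_+)$; analogously for $g_2$.

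Finally, on $\spt{u}$ one has $(U_\eps-\Phi(x))_+=U_\eps-\Phi(x)\in C^\infty(\R^d)$, so
\begin{equation*}
F'(\bar u_\eps)(x) = \bigl(U_\eps-\Phi(x)\bigr) + \tilde g_1\bigl(U_\eps-\Phi(x),\,V_\eps-\Psi(x)\bigr)
\end{equation*}
is the composition of the $C^k$-function $\tilde g_1$ with a $C^\infty$-map, hence of class $C^k(\spt{u})$, with bounds uniform in $\eps\in[0,\eps^*]$; the argument for $G'(\bar v_\eps)$ on $\spt{v}$ is symmetric. The main obstacle is the inductive verification in the third paragraph: the Faà~di~Bruno bookkeeping must be organized so that no derivative of $\theta_u,\theta_v$ of order exceeding $k$ is ever invoked. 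Once that is settled, the remaining steps are routine applications of the inverse function theorem and the chain rule.
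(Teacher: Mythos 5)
Your proposal is correct, and while it starts from the same skeleton as the paper (pass to $\rho=F'(\bar u_\eps)$, $\eta=G'(\bar v_\eps)$, invert the map $(\rho,\eta)\mapsto(\rho+\eps\theta_u,\eta+\eps\theta_v)$ using the determinant bound $\det\ge\tfrac14$ from~\eqref{eq:Hqual} and the inverse function theorem), it handles the delicate step — the lack of smoothness of $(V_\eps-\Psi)_+$ across $\partial\spt{v}$ — in a genuinely different way. The paper argues pointwise in $x$: at each $x^*\in\spt{u}\cap\partial\spt{v}$ it matches, by induction on the order of differentiation, the one-sided limits of $\partial^\alpha\bar\rho_\eps$ from inside and outside $\spt{v}$, using that derivatives of $\theta_u,\theta_v$ up to order $k$ vanish as $\bar\eta_\eps\to0$. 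You instead prove a single statement about the inverse map in the $(\rho,\eta)$-plane, namely that $g_1=\pi_1(R_\eps-\id)$ is $k$-flat on $\{\eta=0\}$, so that its zero-extension $\tilde g_1$ is $C^k$ and the truncation can be absorbed, giving $F'(\bar u_\eps)=(U_\eps-\Phi)+\tilde g_1(U_\eps-\Phi,V_\eps-\Psi)$ on all of $\spt{u}$ (consistent with~\eqref{eq:thetrivialone} outside $\overline{\spt{v}}$); this packages the interface matching into one composition argument and is arguably cleaner. The Fa\`a di Bruno bookkeeping you worry about closes without difficulty: differentiating $S_\eps\circ R_\eps=\id$ at a point of the axis, the base point $R_\eps(z)=z$ stays on the axis, where $DS_\eps=I$ and all derivatives of $S_\eps$ of orders $2,\dots,k$ equal $\eps$ times derivatives of $\theta_u,\theta_v$ of order at most $k$ and hence vanish; solving for the top-order derivative gives $\partial^\beta R_\eps=\partial^\beta\id$ there, and no derivative of order exceeding $k$ is ever invoked. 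Two small points to fix: the opening remark that $F'$, $G'$ are $C^\infty$-diffeomorphisms of $\Rnn$ is false at $0$ when $m,n>2$ and is also unnecessary, since the assertion concerns $\rho$, $\eta$ themselves; and global invertibility of $S_\eps$ does not follow from uniform invertibility of $DS_\eps$ plus compactness alone (Hadamard-type theorems need properness, and only $k$-degeneracy, not boundedness of $\theta_u,\theta_v$, is assumed) — it should be taken, as in the paper, from the strict monotonicity and surjectivity of $\dff H_\eps$ recorded in Remark~\ref{remark_H_eps}, after which the local inverse function theorem upgrades the continuous inverse to $C^k$ with the $\eps$-uniform bounds you state.
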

\begin{remark}
  \label{rmk:Liponly}
  The essential point of Proposition~\ref{prp:regularity} is that $F'(\bar u_\eps)$ is~$k$ times continuously differentiable
  across the boundary of the support~$\partial\spt{v}$ of \emph{the other} component~$\bar v_\eps$, and vice versa.
  Across the boundary of \emph{its own} support~$\partial\spt{u}$,
  the function $F'(\bar u_\eps)$ is generally Lipschitz but no better,
  see Remark~\ref{rmk:reallyLiponly} after the proof.
  This translates into mere H\"older continuity for~$\bar u_\eps$,
  and in particular, one cannot expect the derivatives of~$\bar u_\eps$ itself to be bounded on~$\spt{u}$.
\end{remark}
\begin{proof}
  The claim will follow by an application of the inverse function theorem.  
  To that end, define the map $\themap_\eps \colon \R^2\to\R^2$ by
  \begin{align*}
    \themap_\eps(\rho,\eta) =
    \dff H_\eps\big((F')^{-1}(\rho),(G')^{-1}(\eta)\big) =
    \begin{pmatrix}
      \rho + \eps\theta_u(\rho,\eta) \\
      \eta + \eps\theta_v(\rho,\eta) 
    \end{pmatrix},
  \end{align*}
  with $\theta_u,\,\theta_v$ from~\eqref{eq:deftheta},
  and the convention that $\theta_u(\rho,\eta)=\theta_v(\rho,\eta)=0$ if $\rho\le0$ or $\eta\le0$.
  By $k$-degeneracy,~$\themap_\eps$ is $C^k$-regular.
  
  Recall from Remark~\ref{remark_H_eps} that~$\dff H_\eps$ is a homeomorphism of~$\Rnn^2$. Since~$F'$ and~$G'$ are continuous and strictly monotone on~$\Rnn$, the restriction of~$\themap_\eps$ to~$\Rnn^2$ possesses a continuous inverse as well.
  Moreover, on $\R^2\setminus\Rp^2$, the inverse of $\themap_\eps$ is simply the identity.
  In conclusion, $\themap_\eps$ has a global continuous inverse $\themap_\eps^{-1} \colon \R^2\to\R^2$
  that is the identity on $\R^2\setminus\Rp^2$.

  Next, we show that the inverses of the derivative matrices
  \begin{align}
    \label{eq:thetaprime}
    \dff\themap_\eps =
    \begin{pmatrix}
      1+\eps\partial_\rho\theta_u & \eps\partial_\eta\theta_u \\
      \eps\partial_\rho\theta_v & 1+\eps\partial_\eta\theta_v
      \end{pmatrix}
  \end{align}
  are locally bounded.
  On $\R^2\setminus\Rp^2$, this is trivial since
  \begin{align}
    \label{eq:deriveunit}
    \dff\themap_\eps(\rho,\eta) =
    \begin{pmatrix}
      1 & 0 \\ 0 & 1
    \end{pmatrix}
                   \quad\text{if $\rho\le0$ or $\eta\le0$}.
  \end{align}
  On~$\Rp^2$, we use the alternative representation
  \begin{align*}
    \dff\themap_\eps (\rho,\eta) =
    \dff^2 H_\eps(u,v)\,
    \begin{pmatrix}
      \frac1{F''(u)} & 0 \\
      0 & \frac1{G''(v)}
    \end{pmatrix},
          \quad\text{with}\quad u=(F')^{-1}(\rho),\,v=(G')^{-1}(\eta).
  \end{align*}
  Since $u,v>0$, the Hessian matrix $\dff^2H_\eps(u,v)$ is positive definite.
  Since $R\ge S$ for symmetric, positive definite matrices implies $\det R\ge\det S>0$, 
  the estimate~\eqref{eq:Hqual} shows that
  \begin{align}
    \label{eq:detbelow}
    \det \dff\themap_\eps (\rho,\eta) \ge \frac14 F''(u)G''(u) \cdot \frac1{F''(u)G''(v)} = \frac14.
  \end{align}
  It follows that the inverses
  \begin{align*}
    (\dff\themap_\eps)^{-1}
    = \frac1{\det\dff\themap_\eps}
    \begin{pmatrix}
      1+\eps\partial_\eta\theta_v & -\eps\partial_\eta\theta_u \\
      -\eps\partial_\rho\theta_v & 1+\eps\partial_\rho\theta_u
    \end{pmatrix}
  \end{align*}
  are bounded on any compact subset of~$\Rnn^2$, uniformly with respect to $\eps\in[0,\eps^*]$.
  The inverse function theorem is applicable and shows that~$\themap_\eps^{-1}$ is of class~$C^k$,
  with derivatives up to $k$-th order $\eps$-uniformly bounded on each compact set of~$\R^2$.

  To conclude regularity of $F'(\bar u_\eps)$ and $G'(\bar v_\eps)$ from here,
  we perform the following change of variables:
  \[ \rho(x) = F'(u(x)), \quad \eta(x) = G'(v(x)). \]
  Written in terms of $\bar\rho_\eps\coloneqq F'(\bar u_\eps)$ and $\bar\eta_\eps\coloneqq G'(\bar v_\eps)$,
  the system~\eqref{eq:EL} of Euler--Lagrange equations becomes
  \begin{align}
    \label{eq:thetasys}
    \themap_\eps(\bar\rho_\eps,\bar\eta_\eps) =
    \begin{pmatrix}
      (U_\eps-\Phi)_+ \\ (V_\eps-\Psi)_+
    \end{pmatrix},
  \end{align}
  and its solution is given by
  \begin{align}
    \label{eq:ithetasys}
    \begin{pmatrix}
      \bar\rho_\eps \\ \bar\eta_\eps
    \end{pmatrix}
    = (\themap_\eps)^{-1}
    \bigg(\begin{pmatrix}
      (U_\eps-\Phi)_+ \\ (V_\eps-\Psi)_+
    \end{pmatrix}\bigg)
    .
  \end{align}
  On $\spt{u}\cap\spt{v}$, where we have
  \begin{align*}
    (U_\eps-\Phi)_+ = U_\eps-\Phi, \quad (V_\eps-\Psi)_+    = V_\eps-\Psi,
  \end{align*}
  it now follows directly from~\eqref{eq:ithetasys}
  that~$\bar\rho_\eps$ and~$\bar\eta_\eps$ inherit the $C^k$-regularity of~$\Phi$ and~$\Psi$.
  Recalling that $\bar u_\eps,\bar v_\eps\le\bar U$,
  we conclude from the $\eps$-uniform local boundedness of the derivatives of~$\themap_\eps^{-1}$
  that also the partial derivatives of~$\bar\rho_\eps$ and~$\bar \eta_\eps$ of order $\le k$ are bounded,
  uniformly in $\eps\in[0,\eps^*]$ on $\spt{u}\cap\spt{v}$.

  Next, recalling that $\themap_\eps(\rho,0)=(\rho,0)$,
  we observe that on $\spt{u}\setminus\overline{\spt{v}}$,
  \begin{align}
    \label{eq:thetrivialone}
    \bar\rho_\eps = U_\eps-\Phi.
  \end{align}
  Therefore,~$\bar\rho_\eps$ inherits the $C^k$-regularity and bounds from~$\Phi$.
  The analogous statement holds for~$\bar\eta_\eps$ on $\spt{v}\setminus\overline{\spt{u}}$.
  
  It remains to verify the existence and continuity of all partial derivatives $\partial^\alpha\bar\rho_\eps$ of order $|\alpha|\le k$
  across the interfaces $\spt{u}\cap\partial\spt{v}$.
  We will do this by showing that at any point $x^*\in \spt{u}\cap\partial\spt{v}$,
  the limits of $\partial^\alpha\bar\rho_\eps$ from the inside and from the outside of~$\spt{v}$ agree.
  Since, according to~\eqref{eq:thetrivialone}, the outside limit amounts to $-\partial^\alpha\Psi$,
  which is smooth on~$\partial\spt{v}$,
  this also proves continuity of~$\bar \rho_\eps$ across the boundary $\spt{u}\cap\partial\spt{v}$.

  Thus, let $x^*\in \spt{u}\cap\partial\spt{v}$ be fixed.
  At points $x\in\spt{u}\cap\spt{v}$,~\eqref{eq:ithetasys} simplifies to
  \begin{align*}
    \begin{pmatrix}
      \bar\rho_\eps(x)) \\ \bar\eta_\eps(x))
    \end{pmatrix}
    = \themap_\eps^{-1}
    \bigg(
    \begin{pmatrix}
      U_\eps-\Phi(x) \\ V_\eps-\Psi(x)
    \end{pmatrix}\bigg).    
  \end{align*}
  We start by considering first derivatives. 
  Since
  \begin{align*}
    \dff\big(\themap_\eps^{-1}\big) = (\dff\themap_\eps)^{-1}\circ\themap_\eps^{-1}
    = \left[\frac1{\det\dff\themap_\eps}
    \begin{pmatrix}
      1+\eps\partial_\eta\theta_v & -\eps\partial_\eta\theta_u \\
      -\eps\partial_\rho\theta_v & 1+\eps\partial_\rho\theta_u
    \end{pmatrix}\right]
                                   \circ\themap_\eps^{-1},
  \end{align*}
  it follows that 
  \begin{align}
    \label{eq:1st}
    \partial_{x_k}\bar\rho_\eps = \frac1{\det\dff\themap_\eps(\bar\rho_\eps,\bar\eta_\eps)}
    \big[-\big(1+\eps\partial_\eta\theta_v(\bar\rho_\eps,\bar\eta_\eps)\big)\partial_{x_k}\Phi
    -\eps\partial_\eta\theta_u(\bar\rho_\eps,\bar\eta_\eps)\big)\partial_{x_k}\Psi\big].
  \end{align}
  Since~$\bar\rho_\eps$ and~$\bar\eta_\eps$ are continuous at~$x^*$ with $\bar\eta_\eps(x^*)=0$, 
  it follows thanks to $k$-degeneracy
  that also $\partial_\rho\theta_u(\bar\rho_\eps,\bar\eta_\eps)$ and $\partial_\eta\theta_u(\bar\rho_\eps,\bar\eta_\eps)$
  converge to zero as $x\to x^*$. 
  Hence,~\eqref{eq:1st} implies $\partial_{x_k}\bar\rho_\eps(x)\to -\partial_{x_k}\Phi(x^*)$,
  as desired.
  Higher-order partial derivatives can now be obtained by induction on the degree of differentiability.
  Assume that for $\ell<k$, uniform boundedness of all partial derivatives of order $\le\ell$ has been shown;
  note that the proof for $\ell=1$ is above.
  Application of $\partial^{\alpha'}$ with $|\alpha'|=\ell$ to~\eqref{eq:1st} yields
  \begin{align*}
    \partial^{\alpha'}\partial_{x_k}\bar\rho_\eps
    = \sum_{\beta+\beta'+\beta''=\alpha'}
    & C_{\beta,\beta',\beta''}
      \partial^\beta\left(\frac1{\det\dff\themap_\eps(\bar\rho_\eps,\bar\eta_\eps)}\right) \\
    &\times\big[-\partial^{\beta'}\big(1+\eps\partial_\eta\theta_v(\bar\rho_\eps,\bar\eta_\eps)\big)\partial^{\beta''}\partial_{x_k}\Phi
      -\eps\partial^{\beta'}\partial_\eta\theta_u(\bar\rho_\eps,\bar\eta_\eps)\big)\partial^{\beta''}\partial_{x_k}\Psi\big],
  \end{align*}
  with certain combinatorial coefficients $C_{\beta,\beta',\beta''}$.
  The partial derivatives of~$\Phi$ and~$\Psi$ remain uniformly bounded as $x\to x^*$.
  The same is true for the partial derivatives of $1/\det\dff\themap_\eps$;
  the entries in $\dff\themap_\eps$ are $C^{k-1}$-regular functions by the $k$-degeneracy condition,
  and $\det\dff\themap_\eps$ is bounded below by virtue of~\eqref{eq:detbelow}.
  Further, observe that $\partial^{\beta'}\partial_\eta\theta_u(\bar\rho_\eps,\bar\eta_\eps)$ can be written
  as a finite weighted sum, where each term is
  a partial derivative (with respect to $(\rho,\eta)$) of order $\le\ell+1=k$ of~$\theta_u$ at $(\bar\rho_\eps(x),\bar\eta_\eps(x))$
  --- and hence continuous ---
  multiplied by a product of partial derivatives (with respect to~$x$) of~$\bar\rho_\eps$ and~$\bar\eta_\eps$ of order~$\le\ell$
  --- and hence uniformly bounded by induction hypothesis.
  As above, using that $\bar\eta_\eps(x)\to0$ as $x\to x^*$,
  it follows
  by means $k$-degeneracy
  that $\partial^{\beta'}\partial_\eta\theta_u\big(\bar\rho_\eps(x),\bar\eta_\eps(x)\big) \to 0$
  and that $\partial^{\beta'}\partial_\eta\theta_v\big(\bar\rho_\eps(x),\bar\eta_\eps(x)\big) \to 0$
  as $x\to x^*$.
  And so,
  \begin{align*}
    \partial^{\alpha'}\partial_{x_k}\bar\rho_\eps(x) \to -\partial^{\alpha'}\partial_{x_k}\Phi(x^*),
  \end{align*}
  finishing the proof.
\end{proof}
\begin{remark}
  \label{rmk:reallyLiponly}
  If $(F,G,h)$ is $1$-degenerate, then~\eqref{eq:deriveunit} implies
  that~$\themap_\eps^{-1}$ is a differentiable perturbation of the identity near any point $(0,z)$ with $z\in\R$. 
  Thus, the first component~$\bar\rho_\eps$ in~\eqref{eq:ithetasys}
  is of the form $(U_\eps-\Phi)_+$ plus some differentiable perturbation near the boundary~$\partial\spt{u}$.
  This confirms Remark~\ref{rmk:Liponly} that in general only Lipschitz regularity
  can be expected from $F'(\bar u_\eps)$ across the boundary of its own support.
\end{remark}
\begin{example}
  The following example illustrates that without the hypothesis of $2$-degeneracy,
  one cannot expect $C^2$-regularity of $F'(\bar u_\eps)$ and $G'(\bar v_\eps)$.
  We consider 
  \begin{align*}
    F(u) = \frac{u^2}2, \quad G(v) = \frac{v^2}2, \quad h(u,v) = (uv)^2,
  \end{align*}
  which is $1$-degenerate, but not $2$-degenerate.
  The system~\eqref{eq:EL} attains the form
  \begin{align*}
    \bar u_\eps(1+2\eps\bar v_\eps^2) = (U_\eps-\Phi)_+, \quad \bar v_\eps(1+2\eps \bar u_\eps^2) = (V_\eps-\Psi)_+\,.
  \end{align*}
  If $x^*\in\spt{u}\cap\partial\spt{v}$,
  then $\bar v_\eps\approx(V_\eps-\Psi)_+$ is Lipschitz but not differentiable at~$x^*$, see Remark~\ref{rmk:reallyLiponly} above.
  Thus~$\bar v_\eps^2$ is once continuously differentiable, but fails to be twice differentiable at~$x^*$. Consequently,
  \[ F'(\bar u_\eps) = \bar u_\eps = \frac{U_\eps-\Phi}{1+2\eps\bar v_\eps^2}\]
  fails to be twice differentiable at~$x^*$. 
\end{example}
\begin{corollary}
  \label{cor:semiconvex}
  Assume that $(F,G,h)$ is $2$-degenerate.
  Then $\partial_uh(\bar u_\eps,\bar v_\eps)$ and $\partial_vh(\bar u_\eps,\bar v_\eps)$
  are $\eps$-uniformly semi-convex, that is, 
  there is a $K_0\ge0$ such that
  \begin{align}
    \label{eq:semiconvex}
    \nabla^2 \partial_uh(\bar u_\eps,\bar v_\eps) \ge -K_0\eins,
    \quad
    \nabla^2 \partial_vh(\bar u_\eps,\bar v_\eps) \ge -K_0\eins
  \end{align}
  on~$\R^d$, for all $\eps\in[0,\eps^*]$.
\end{corollary}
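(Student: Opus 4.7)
The plan is to prove the stronger claim that both $\partial_uh(\bar u_\eps,\bar v_\eps)$ and $\partial_vh(\bar u_\eps,\bar v_\eps)$ are $C^2$ functions on $\R^d$ with Hessians bounded uniformly in $\eps\in[0,\eps^*]$; the semi-convexity constant $K_0$ in \eqref{eq:semiconvex} is then the common $L^\infty$-bound on those Hessians. I focus on $f_\eps \coloneqq \partial_uh(\bar u_\eps,\bar v_\eps)=\theta_u(\bar\rho_\eps,\bar\eta_\eps)$, with $\bar\rho_\eps=F'(\bar u_\eps)$ and $\bar\eta_\eps=G'(\bar v_\eps)$ as in the proof of Proposition~\ref{prp:regularity}; the argument for $\partial_vh$ is entirely symmetric.

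On the open set $\spt{u}\cap\spt{v}$, Proposition~\ref{prp:regularity} with $k=2$ provides $\bar\rho_\eps,\bar\eta_\eps\in C^2$ with spatial derivatives of order $\le 2$ bounded uniformly in $\eps$. By Proposition~\ref{prp:steady}, the point $(\bar\rho_\eps(x),\bar\eta_\eps(x))$ stays inside the fixed compact set $[0,F'(\bar U)]\times[0,G'(\bar U)]$, where the $C^2$-regularity of $\theta_u$ guaranteed by $2$-degeneracy yields a uniform bound on all partial derivatives of $\theta_u$ of order $\le 2$. Expanding $\nabla^2 f_\eps$ via the chain rule produces a finite combination of such bounded prefactors multiplied by spatial derivatives of $\bar\rho_\eps,\bar\eta_\eps$ of order at most $2$, yielding an $\eps$-uniform bound $|\nabla^2 f_\eps|\le K_0$ on $\spt{u}\cap\spt{v}$.

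Outside $\overline{\spt{u}\cap\spt{v}}$, either $\bar\rho_\eps$ or $\bar\eta_\eps$ vanishes; since $2$-degeneracy forces $\theta_u$ to be zero on $\partial\Rnn^2$, the function $f_\eps$ vanishes identically there, together with all its derivatives. It remains to match across the common boundary $\partial(\spt{u}\cap\spt{v})$. At any such point $x^*$, at least one of $\bar\rho_\eps(x^*),\bar\eta_\eps(x^*)$ is zero, so $(\bar\rho_\eps(x^*),\bar\eta_\eps(x^*))\in\partial\Rnn^2$, and $2$-degeneracy forces every partial derivative of $\theta_u$ of order $\le 2$ at that point to vanish. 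Every term appearing in the chain-rule expansion for $\nabla^2 f_\eps$ from inside $\spt{u}\cap\spt{v}$ therefore tends to zero as $x\to x^*$, while from the exterior the Hessian is identically zero. Consequently $\nabla f_\eps$ is continuous and Lipschitz across $\partial(\spt{u}\cap\spt{v})$, and $\nabla^2 f_\eps$ extends continuously by zero, so $f_\eps\in C^2(\R^d)$ with the globally $\eps$-uniform Hessian bound established above.

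The main subtlety is the matching across $\partial\spt{u}$: by Remark~\ref{rmk:reallyLiponly}, the function $\bar\rho_\eps$ is at best Lipschitz across this boundary, so its one-sided second derivatives disagree. The second-order vanishing of $\theta_u$ at $\{\rho=0\}$ is precisely what absorbs the jump: the prefactors $\partial_\rho\theta_u$, $\partial^2_{\rho\rho}\theta_u$ and $\partial^2_{\rho\eta}\theta_u$ that multiply the potentially discontinuous derivatives of $\bar\rho_\eps$ all vanish as $\bar\rho_\eps\to 0$, which is why the full $2$-degeneracy (rather than $1$-degeneracy, cf.~the example following Remark~\ref{rmk:reallyLiponly}) is indispensable; the symmetric reasoning applies across $\partial\spt{v}$.
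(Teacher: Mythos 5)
Your proof is correct and takes essentially the same route as the paper: the chain-rule representation of $\nabla^2\partial_uh(\bar u_\eps,\bar v_\eps)$, the $\eps$-uniform $C^2$-bounds on $F'(\bar u_\eps)$ and $G'(\bar v_\eps)$ from Proposition~\ref{prp:regularity}, and the vanishing of the derivatives of $\theta_u$ on $\partial\Rnn^2$ by $2$-degeneracy, giving a global $\eps$-uniform Hessian bound and hence~\eqref{eq:semiconvex}. The only (cosmetic) difference is that you match across $\partial(\spt{u}\cap\spt{v})$, using that $\partial_uh(\bar u_\eps,\bar v_\eps)$ vanishes identically outside that set, whereas the paper bounds the derivatives on $\spt{u}$ and matches across $\partial\spt{u}$; both rest on exactly the same mechanism.
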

\begin{proof}
  By Proposition~\ref{prp:regularity} above,
  it follows that the gradients and the Hessians of $F'(\bar u_\eps)$ and $G'(\bar u_\eps)$
  are $\eps$-uniformly bounded on the respective supports~$\spt{u}$ and~$\spt{v}$.
  With
  \begin{align*}
    \partial_uh(\bar u_\eps,\bar v_\eps)
    = \theta_u\big(F'(\bar u_\eps),G'(\bar v_\eps)\big),
  \end{align*}
  and with $\theta_u\in C^2(\Rnn^2)$,
  the first and second order derivatives of $\partial_uh(\bar u_\eps,\bar v_\eps)$
  are $\eps$-independently bounded on~$\spt{u}$.
  In fact, omitting the arguments, we have the representation
  \begin{align*}
    \nabla^2  \partial_uh
    &= \partial_\rho\theta_u\,\nabla^2F' + \partial_\eta\theta_u\,\nabla^2G' \\
    &\quad + \partial_{\rho\rho}\theta_u\,\nabla F'\otimes\nabla F'
      + \partial_{\eta\eta}\theta_u\,\nabla G'\otimes\nabla G'
      + \partial_{\rho\eta}\theta_u\,\big(\nabla F'\otimes\nabla G'+\nabla G'\otimes\nabla F'\big).
  \end{align*}
  Fix some~$x^*$ at the boundary of the support~$\spt{u}$, and consider the expression above for~$x$ approaching~$x^*$ from inside~$\spt{u}$.
  The gradients and Hessians of~$F'$ and~$G'$ remain bounded,
  whereas the partial derivatives of~$\theta_u$ converge to zero because of $2$-degeneracy,
  and because their argument $F'(\bar u_\eps)$ converges to zero.
  Thus $\nabla^2 \partial_uh(\bar u_\eps,\bar v_\eps)$ is continuous across the boundary of the support~$\spt{u}$.
  This implies an $\eps$-uniform bound on the second derivatives of $\partial_uh(\bar u_\eps,\bar v_\eps)$,
  and in particular the semi-convexity estimate~\eqref{eq:semiconvex}.
\end{proof}


\section{Time-discrete variational approximation}
\label{sct:discrete}

In this section, we assume all hypotheses on~$\Phi$,~$\Psi$,~$F$,~$G$ and~$h$ from Section~\ref{sct:hypo},
and further that $(F,G,h)$ is $2$-bounded, $2$-degenerate, and satisfies the swap condition.
We assume that $\eps \in [0,\bar\eps]$ is fixed,
where $\bar \eps \in (0,\eps^*]$ is chosen such that 
\begin{equation}
\label{eqn_eps_bar}
 12 \bar{\eps}^2 (A^2+W) \leq 1 \quad \text{and} \quad 2 K_0 \bar \eps \leq \Lambda,
\end{equation}
with~$\eps^*$ from hypothesis~\eqref{hyp:hconvex},~$A$ the constant from Remark~\ref{remark_consequence_hypotheses} related to the $2$-boundedness and $2$-degeneracy of $(F,G,h)$,~$W$ the constant from the swap condition~\eqref{hyp:swap},~$K_0$ the constant determined in Corollary~\ref{cor:semiconvex}, and~$\Lambda$ the lower ellipticity bound of~$\Phi, \Psi$ from hypothesis~\eqref{hyp:Phi}. Note that this choice of~$\bar \eps$ in particular implies non-negativity and strict convexity of~$\nrg_\eps$.

As above, $(\bar u_\eps,\bar v_\eps) \in\prbtwotwo$ denotes the unique stationary pair of densities.
Further, recall the definition of~$\theta_u$ and~$\theta_v$ in~\eqref{eq:deftheta}.
We shall use the abbreviation
\begin{align}
  \label{eq:abbrev}
  \bar\Theta_u\coloneqq  \theta_u\big(F'(\bar u_\eps),G'(\bar v_\eps)\big) = \partial_uh(\bar u_\eps,\bar v_\eps),
  \quad
  \bar\Theta_v\coloneqq  \theta_v\big(F'(\bar u_\eps),G'(\bar v_\eps)\big) = \partial_vh(\bar u_\eps,\bar v_\eps).
\end{align}

\subsection{Yosida-regularization and Results}
The fundamental object that we use
for proving the results on existence and long-time asymptotics of solutions
is the following Yosida-type regularization~$\nrg_{\eps,\tau}$ of~$\nrg_\eps$
with a time step $\tau>0$:
\begin{align*}
  \nrg_{\eps,\tau}\big((u,v)\big|(\bar u,\bar v)\big)
  \coloneqq  \frac1{2\tau}\dwass\big((u,v),(\bar u,\bar v)\big)^2+\nrg_\eps(u,v).
\end{align*}
A time-discrete approximation of solutions to~\eqref{eq:eq} will be obtained in JKO-style~\cite{JKO} by means of inductive minimization of~$\nrg_{\eps,\tau}$.
The following certifies well-posedness of that induction.
\begin{lemma}
  \label{lem:jko}
  Given any pair $(\hat u,\hat v)\in\prbtwotwo$ of finite energy $\nrg_{\eps}(\hat u,\hat v) < \infty$ with $\eps \in [0,\eps^*]$,
  there is a unique minimizing pair $(u^*,v^*)\in\prbtwotwo$ of $\nrg_{\eps,\tau}\big(\cdot |(\hat u,\hat v)\big)$.
  Moreover, one has
  \begin{align}
    \label{eq:onestepmono}
    \nrg_\eps(u^*,v^*) + \frac{\tau}{2} \left( \frac{\dwass\big((u^*,v^*),(\hat u,\hat v)\big)}{\tau} \right)^2
    \le \nrg_\eps(\hat u,\hat v).    
  \end{align}
  In particular, $\nrg_\eps(u^*,v^*)\le\nrg_\eps(\hat u,\hat v)$.
\end{lemma}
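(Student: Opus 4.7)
The plan is to apply the direct method of the calculus of variations, with uniqueness secured by strict convexity in the usual linear sense. First I would verify coercivity: arguing as in the proof of Proposition~\ref{prp:steady}, the quadratic growth condition~\eqref{hyp:quadratic_growth} together with~\eqref{hyp:hbound} (which absorbs $\eps h$ into $F + G$) and the $\Lambda$-convexity of the potentials yields
\begin{equation*}
  \nrg_\eps(u,v) \geq c \intrd (u^2+v^2)\dd x + \frac{\Lambda}{4}\intrd |x|^2(u+v)\dd x - C,
\end{equation*}
for some $c > 0$ and $C \in \R$. Since $\dwass^2 \geq 0$, any minimizing sequence $(u_n, v_n)$ for $\nrg_{\eps,\tau}(\,\cdot\,|\,(\hat u, \hat v))$ has uniformly bounded second moments and uniformly bounded $L^2$ norms. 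By Prokhorov and Alaoglu, a suitable subsequence converges narrowly and jointly weakly in $L^2(\R^d)$ to a limit $(u^*, v^*) \in \prbtwotwo$.

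To pass to the limit, I would combine standard lower semi-continuity results. The squared distance $(u,v) \mapsto \dwass((u,v),(\hat u, \hat v))^2$ is narrowly lower semi-continuous (as a supremum of narrowly continuous affine functionals, via the dual formulation~\eqref{eq:defW}). For the bulk energy, the joint convexity of $H_\eps = F + G + \eps h$ on $\Rnn^2$ that holds by hypothesis~\eqref{hyp:hconvex} for $\eps \leq \eps^*$ (together with $H_\eps \geq 0$, which follows from $H_\eps(0,0) = 0 = \nabla H_\eps(0,0)$ by~\eqref{eq:degeneracy} and~\eqref{hyp:hvanish}) yields lower semi-continuity of $\intrd H_\eps(u,v) \dd x$ along jointly weakly $L^2$-convergent sequences by the classical theory of convex integral functionals. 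The potential contributions $\intrd u \Phi \dd x$ and $\intrd v \Psi \dd x$ pass to the limit using the uniform second moment bound in conjunction with the parabolic two-sided estimate~\eqref{eq:Phiupbound}. Hence $(u^*, v^*)$ minimizes $\nrg_{\eps,\tau}(\,\cdot\,|\,(\hat u, \hat v))$.

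Uniqueness follows from strict convexity in the linear sense: strict convexity of $H_\eps$ for $\eps \leq \eps^*$ (cf.~Remark~\ref{remark_H_eps}) combined with strict convexity of $\Phi$ and $\Psi$ from~\eqref{hyp:Phi} renders $\nrg_\eps$ strictly convex, while~\eqref{eq:defW} represents $\tfrac{1}{2}\wass(\,\cdot\,,\hat u)^2$ as a supremum of functionals that are \emph{affine} in $u$, hence convex. Consequently $\nrg_{\eps,\tau}(\,\cdot\,|\,(\hat u, \hat v))$ is strictly convex on $\prbtwotwo$, so its minimizer is unique. The estimate~\eqref{eq:onestepmono} is then immediate: testing the minimality of $(u^*,v^*)$ against the admissible competitor $(\hat u, \hat v)$, which contributes zero to the distance term, gives
\begin{equation*}
  \nrg_\eps(u^*, v^*) + \frac{1}{2\tau} \dwass\big((u^*, v^*),(\hat u, \hat v)\big)^2 \leq \nrg_\eps(\hat u, \hat v),
\end{equation*}
which is~\eqref{eq:onestepmono} after rearranging the prefactors.

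The main subtlety I anticipate lies in the lower semi-continuity of $\intrd h(u,v) \dd x$, since $h$ couples the two components and could in principle be negative; this is precisely the reason for absorbing $h$ into $H_\eps$ and exploiting the joint convexity and non-negativity guaranteed by~\eqref{hyp:hconvex} and~\eqref{hyp:hbound}, rather than treating the coupling as an additive perturbation. Everything else is a routine application of standard JKO-style arguments, made possible by the fact that for $\eps \leq \eps^*$ the functional $\nrg_\eps$ is not only geodesically non-convex but still strictly convex in the ordinary sense.
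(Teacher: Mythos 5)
Your proposal is correct and follows essentially the same route as the paper: the direct method in $\prbtwotwo$ using coercivity of $\nrg_\eps$ (second moments and $L^2$ control), narrow/weak lower semi-continuity of the convex terms, uniqueness from strict convexity of $\nrg_\eps$ in the ordinary linear sense together with convexity of the squared Wasserstein distance, and~\eqref{eq:onestepmono} obtained by testing minimality against the competitor $(\hat u,\hat v)$. The paper states this more tersely (citing Remark~\ref{remark_H_eps} for non-negativity and strict convexity of $H_\eps$), but the underlying argument is the same as yours.
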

\begin{proof}
  Existence and uniqueness follows from the direct methods in the calculus of variations, applied to the functional~$\nrg_{\eps,\tau}$ in $\prbtwotwo$.
  Just notice that~$\nrg_{\eps,\tau}$ is non-negative (as $\eps \in [0,\eps^*]$), 
  that $\wass(u,\hat u)^2$ is convex in~$u$,
  that $\wass(v,\hat v)^2$ is convex in~$v$,
  and that $\nrg_\eps(u,v)$ is strictly jointly convex in $(u,v)$.
  Coercivity on $\prbtwotwo$ follows thanks to the control on the second moments of~$u$ and~$v$ by $\nrg_\eps(u,v)$.
  Concerning the inequality~\eqref{eq:onestepmono}:
  by $(u^*,v^*)$ being a minimizer, we have
  \begin{align*}
    \nrg_\eps(u^*,v^*) + \frac{\dwass\big((u^*,v^*),(\hat u,\hat v)\big)^2}{2\tau}
    = \nrg_{\eps,\tau}\big((u^*,v^*)\big|(\hat u,\hat v)\big)
    \le  \nrg_{\eps,\tau}\big((\hat u,\hat v)\big|(\hat u,\hat v)\big)
    = \nrg_\eps(\hat u,\hat v),
  \end{align*}
  which proves the claim.
\end{proof}
The goal of this section is to prove the three results in Propositions \ref{prp:dweak}, \ref{prp:regular}, and \ref{prp:core} below on the minimizing pairs $(u^*,v^*)$.
In the statements of the propositions, it is understood that $\eps \in [0,\bar{\eps}]$ is fixed,
that $(\hat u,\hat v) \in \prbtwotwo$ is a given datum of finite energy $\nrg_{\eps}(\hat u,\hat v) < \infty$,
and that $(u^*,v^*)$ is the associated minimizer of $\nrg_{\eps,\tau}\big(\cdot |(\hat u,\hat v)\big)$ in $\prbtwotwo$.

The \emph{first result} shows that $(u^*,v^*)$ satisfies a time-discrete weak formulation of the evolution equations~\eqref{eq:eq}.
\begin{proposition}
  \label{prp:dweak}
  For each $\zeta\in C^\infty_c(\R^d)$ there holds
  \begin{equation}
    \label{eq:dweak}
    \begin{split}
      \intrd \frac{u^*-\hat u}{\tau}\zeta\dd x &= \intrd u^*\nabla\big[F'(u^*)+\eps\partial_uh(u^*,v^*)+\Phi\big]\cdot\nabla\zeta\dd x + R_u, \\
      \intrd \frac{v^*-\hat v}{\tau}\zeta\dd x &= \intrd v^*\nabla\big[G'(v^*)+\eps\partial_vh(u^*,v^*)+\Psi\big]\cdot\nabla\zeta\dd x + R_v,
    \end{split}
  \end{equation}
  with remainder terms~$R_u$ and~$R_v$ satisfying
  \begin{align}
    \label{eq:rest}
    |R_u|+|R_v| \le \|\zeta\|_{C^2}\big(\nrg_\eps(\hat u,\hat v)-\nrg_\eps(u^*,v^*)\big).
  \end{align}
\end{proposition}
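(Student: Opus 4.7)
The plan is the standard JKO variational argument, carried out on each component separately. To derive the first equation of~\eqref{eq:dweak}, I fix $\zeta\in C^\infty_c(\R^d)$, let $T_s(x) \coloneqq x + s\nabla\zeta(x)$ (a diffeomorphism for $|s|$ sufficiently small), and perturb only the first component: $u_s\coloneqq T_s\#u^*$, keeping $v^*$ unchanged. The minimality of $(u^*,v^*)$ for $\nrg_{\eps,\tau}(\,\cdot\,|(\hat u,\hat v))$ gives
\[
\nrg_\eps(u_s, v^*) - \nrg_\eps(u^*, v^*) \ge \frac{1}{2\tau}\bigl[\wass(u^*,\hat u)^2 - \wass(u_s,\hat u)^2\bigr],
\]
which I would exploit by estimating each side as $s\to 0^+$ and then symmetrizing via $\zeta\mapsto -\zeta$.

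For the Wasserstein term, I would bound $\wass(u_s,\hat u)^2$ from above by the cost of the suboptimal transport $T_\opt\circ T_s^{-1}\colon u_s \to \hat u$, where $T_\opt$ is the optimal transport from $u^*$ to $\hat u$; expanding $|T_s - T_\opt|^2 = |x - T_\opt|^2 + 2s\nabla\zeta\cdot(x - T_\opt) + s^2|\nabla\zeta|^2$ and integrating against $u^*$ produces
\[
\frac{1}{2\tau}\bigl[\wass(u_s,\hat u)^2 - \wass(u^*,\hat u)^2\bigr] \le \frac{s}{\tau}\intrd \nabla\zeta\cdot(x - T_\opt)\,u^*\dd x + O(s^2).
\]
For the energy, I would use the push-forward change of variables $y = T_s(x)$ together with the Jacobian identity $u_s(T_s(x))\det DT_s(x) = u^*(x)$. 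Differentiating at $s=0$ and integrating by parts — during which the explicit contribution $\eps\int\partial_v h(u^*,v^*)\nabla v^*\cdot\nabla\zeta\dd x$ coming from the transported argument $v^*(T_s(x))$ cancels against the IBP term produced by the $\Delta\zeta$-contribution $\eps\int[h - u\partial_u h]\Delta\zeta\dd x$ — leads to
\[
\frac{\dn}{\dn s}\bigg|_{s=0}\nrg_\eps(u_s, v^*) = \intrd u^*\nabla\bigl[F'(u^*) + \eps\partial_u h(u^*,v^*) + \Phi\bigr]\cdot\nabla\zeta\dd x.
\]

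Combining the two estimates, dividing by $s > 0$, sending $s\to 0^+$, and then swapping $\zeta\mapsto -\zeta$ produces the exact Euler--Lagrange identity
\[
\intrd u^*\nabla\bigl[F'(u^*) + \eps\partial_u h(u^*,v^*) + \Phi\bigr]\cdot\nabla\zeta\dd x = -\frac{1}{\tau}\intrd \nabla\zeta\cdot(x - T_\opt)\,u^*\dd x.
\]
A Taylor expansion $\zeta(T_\opt(x)) - \zeta(x) = \nabla\zeta(x)\cdot(T_\opt(x) - x) + O(\|D^2\zeta\|_\infty |T_\opt(x) - x|^2)$ together with the push-forward identity $\int \zeta(T_\opt)\,u^*\dd x = \int\zeta\,\hat u\dd x$ converts the right-hand side into $\frac{1}{\tau}\intrd(\hat u - u^*)\zeta\dd x$ plus a remainder bounded by $\frac{1}{2\tau}\|D^2\zeta\|_\infty \wass(u^*,\hat u)^2$. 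The one-step monotonicity~\eqref{eq:onestepmono} bounds $\wass(u^*,\hat u)^2 \le \dwass((u^*,v^*),(\hat u,\hat v))^2 \le 2\tau\bigl[\nrg_\eps(\hat u,\hat v) - \nrg_\eps(u^*,v^*)\bigr]$, which absorbs the factor $1/\tau$ and yields the bound~\eqref{eq:rest} on $|R_u|$. The equation for $v$ is obtained by the symmetric perturbation, applied with $u^*$ fixed.

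The main obstacle is the rigorous justification of the energy derivative, in particular the integrability of the mixed contribution $u^*\partial_{uv}h(u^*,v^*)\nabla v^*$ appearing in $u^*\nabla\partial_u h(u^*,v^*)$ without a priori regularity of $v^*$. This is precisely where the structural hypotheses of $2$-boundedness and the swap condition~\eqref{hyp:swap} become essential: using $\partial_{uv}h(u,v) = G''(v)\,\partial_\eta\theta_u(F'(u),G'(v))$ and the swap bound $|\partial_\eta\theta_u|\le W\sqrt{v/u}$, the dangerous term is re-expressed as $\sqrt{u^*/v^*}\,|\nabla g(v^*)|$ up to bounded factors, which is integrable by Cauchy--Schwarz using the a priori $L^1$-control coming from the finite-energy bound.
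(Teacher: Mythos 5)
Your overall scaffolding (minimality inequality, suboptimal coupling for the Wasserstein term, Taylor expansion of $\zeta$ along the optimal map, and the one-step bound \eqref{eq:onestepmono} for the remainder) is sound, but the central step --- the computation of $\frac{\dn}{\dn s}\big|_{s=0}\nrg_\eps(T_s\#u^*,v^*)$ --- has a genuine gap, and it is exactly the point where your route diverges from the paper's. Under the horizontal perturbation $T_s=\id+s\nabla\zeta$, the coupling term becomes $\eps\intrd h\big(u^*/\det \dff T_s,\,v^*\circ T_s\big)\det \dff T_s\dd x$, and differentiating the composed argument $v^*\circ T_s$ (equivalently, producing the term $u^*\partial_{uv}h(u^*,v^*)\nabla v^*$ after your integration by parts) requires weak differentiability of $v^*$, or at least $\sqrt{v^*}\,\nabla G'(v^*)\in L^2(\R^d)$. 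No such regularity is available at this stage: the minimizer $(u^*,v^*)$ is a priori only a finite-energy pair in $\prbtwotwo$, and the finite-energy bound you invoke gives no gradient control whatsoever. Your proposed repair via the swap condition is circular: after Cauchy--Schwarz it needs precisely $\intrd v^*|\nabla G'(v^*)|^2\dd x<\infty$, which in the paper is Lemma~\ref{lem:onestepuFprime} and is itself \emph{derived from} the Euler--Lagrange identity \eqref{eq:optpotentials} that you are in the process of proving. The same issue, in milder form, affects the internal energy: the outer-variation computation naturally yields $-\intrd P_F(u^*)\Delta\zeta\dd x$ with $P_F(r)=rF'(r)-F(r)$, and rewriting this as $\intrd u^*\nabla F'(u^*)\cdot\nabla\zeta\dd x$ again presupposes regularity obtained only a posteriori.

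This is why the paper takes a different route: it uses \emph{vertical} (multiplicative) variations $u^\delta=(1+\delta\rho)u^*$, for which the energy is differentiable with no spatial derivatives of $u^*$ or $v^*$ appearing at all, and handles the first variation of the Wasserstein term through the dual Kantorovich formulation and stability of optimal potentials (Lemma~\ref{lem:optpotentials}); the price is the positivity/support hypothesis \eqref{eq:addhypo}, needed for a.e.\ uniqueness of the potential $\varphi_u$, which is then removed by the $\Gamma$-convergence argument of Section~\ref{sct:removehypo}. The resulting pointwise identity $F'(u^*)+\eps\partial_uh(u^*,v^*)+\Phi=C_u-\varphi_u/\tau$ is also what gives meaning to the gradient in \eqref{eq:dweak}. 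If you want to salvage your approach, you would need an independent a priori regularity step (e.g.\ a flow-interchange estimate in the spirit of Lemma~\ref{lem:onestepuFprime_cut}) strong enough to control the mixed term before deriving the Euler--Lagrange equation; as written, the argument assumes what it must prove. A minor additional point: apply the one-step bound to the \emph{sum} $\wass(u^*,\hat u)^2+\wass(v^*,\hat v)^2$, otherwise your bookkeeping yields $2\|\zeta\|_{C^2}\big(\nrg_\eps(\hat u,\hat v)-\nrg_\eps(u^*,v^*)\big)$ instead of \eqref{eq:rest}.
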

The \emph{second result}, which is the key ingredient for our proof of Theorem~\ref{thm:transient},
is an estimate on $F'(u^*)$ and $G'(v^*)$ in~$H^1$.
It is formulated with help of the entropy functional~$\ent$, introduced in~\eqref{eq:entropy}.
For brevity, we also define~$\entt$ on $\prbtwotwo$ by $\entt(u,v)\coloneqq \ent(u)+\ent(v)$.
By Lemma~\ref{lem:gero} from the Appendix, $\entt(u,v)>-\infty$. 
\begin{proposition}
  \label{prp:regular}
  There is a constant~$C$ independent of $(\hat u,\hat v)$ such that
  \begin{equation}
    \label{eq:allregular}
    \begin{split}
      &\intrd \big( |\nabla F'(u^*)|^2 + |\nabla G'(v^*)|^2 \big) \dd x \\
      &\le C\bigg[1+\nrg_\eps(\hat u,\hat v)
      + \frac{\nrg_\eps(\hat u,\hat v)-\nrg_\eps(u^*,v^*)}\tau
      + \frac{\entt(\hat u,\hat v)-\entt(u^*,v^*)}\tau\bigg].
    \end{split}
  \end{equation}
\end{proposition}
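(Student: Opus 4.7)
The plan is to prove~\eqref{eq:allregular} by a \emph{flow interchange} argument against the heat flow, which is the Wasserstein gradient flow of the entropy $\ent$ and satisfies the evolution variational inequality~\eqref{eq:evi}. This choice of auxiliary flow is dictated by the entropy differences on the right-hand side of~\eqref{eq:allregular}. Concretely, I would let $(U_s)_{s\ge0}$ and $(V_s)_{s\ge0}$ solve $\partial_s U_s=\Delta U_s$, $\partial_s V_s=\Delta V_s$ with $U_0=u^*$, $V_0=v^*$. Minimality of $(u^*,v^*)$ for $\nrg_{\eps,\tau}(\,\cdot\,|\,(\hat u,\hat v))$, divided by $s>0$ and passed to $s\downarrow 0^+$, yields
\begin{equation*}
-\frac{\dn^+}{\dd s}\bigg|_{s=0^+}\nrg_\eps(U_s,V_s)\,\leq\,\frac{1}{2\tau}\frac{\dn^+}{\dd s}\bigg|_{s=0^+}\dwass\bigl((U_s,V_s),(\hat u,\hat v)\bigr)^2.
\end{equation*}
The right-hand side is bounded by $\tau^{-1}[\entt(\hat u,\hat v)-\entt(u^*,v^*)]$ via the EVI~\eqref{eq:evi} applied componentwise together with the product structure of $\dwass$ from Lemma~\ref{lem:doublegeodesic}.

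Next, I would compute the energy derivative by the chain rule and integration by parts. The principal contributions are the Fisher-type integrals $\intrd F''(u^*)|\nabla u^*|^2\dd x$ and $\intrd G''(v^*)|\nabla v^*|^2\dd x$ coming from $F$ and $G$. The potential cross-terms are $\int\nabla\Phi\cdot\nabla u^*\dd x=-\int u^*\Delta\Phi\dd x$, uniformly bounded by $dM$ thanks to~\eqref{hyp:Phi} and $u^*\in\prbtwo$, and similarly for $v^*$. The coupling cross-terms are developed through $\partial_u h(u^*,v^*)=\theta_u(F'(u^*),G'(v^*))$, so that
\begin{equation*}
\eps\,\nabla\partial_u h(u^*,v^*) = \eps(\partial_\rho\theta_u)\,\nabla F'(u^*) + \eps(\partial_\eta\theta_u)\,\nabla G'(v^*);
\end{equation*}
after pairing with $\nabla u^*$ and the analogous $v^*$-pairing, $2$-boundedness bounds the partial derivatives of $\theta_{u,v}$ uniformly by $A$, while the swap condition~\eqref{hyp:swap} handles the asymmetric mixed terms. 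Young's inequality then absorbs the $O(\eps)$ quantities into the Fisher integrals, which is made quantitative by the smallness $12\bar\eps^2(A^2+W)\le 1$ from~\eqref{eqn_eps_bar}.

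The final step upgrades the Fisher-type control $\intrd F''(u^*)|\nabla u^*|^2\dd x\leq\text{(RHS of \eqref{eq:allregular})}$ to the desired $\intrd|\nabla F'(u^*)|^2\dd x=\intrd F''(u^*)^2|\nabla u^*|^2\dd x$. For this I plan to establish an $L^\infty$-bound $u^*,v^*\le\bar U$ by adapting the mass-swap argument of Proposition~\ref{prp:steady}: the competitor $(\tilde u,v^*)$ obtained by capping any excess of $u^*$ above $\bar U$ and relocating it near $\underline x_\Phi$ strictly decreases $\nrg_\eps$ by an amount proportional to the relocated mass, and because the target set has bounded diameter, the corresponding increase of $\dwass^2/(2\tau)$ is also proportional to the mass moved and can be absorbed into the energy gain, contradicting minimality of $(u^*,v^*)$ for $\nrg_{\eps,\tau}$. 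With $F''(u^*)\le F''(\bar U)$, the conversion is immediate. The remainder $[\nrg_\eps(\hat u,\hat v)-\nrg_\eps(u^*,v^*)]/\tau$ on the right-hand side of~\eqref{eq:allregular} enters naturally through Lemma~\ref{lem:jko}, whose monotonicity inequality provides the finite-$s$ bound on $\dwass^2/\tau$ needed to justify the discrete-derivative version of the argument above.

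The hard part is the $L^\infty$-bound for the JKO iterate: unlike for the stationary problem in Proposition~\ref{prp:steady}, the perturbation must be admissible for the full functional $\nrg_{\eps,\tau}$, so the additional Wasserstein penalty of the relocation must be estimated and balanced against the energy gain. A secondary technical point is the rigorous justification of the chain rule for $\nrg_\eps$ along the heat flow when $u^*,v^*$ a priori lack regularity, which is handled by a short-time mollification of the heat flow followed by a standard limiting argument.
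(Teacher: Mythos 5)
Your overall architecture (flow interchange against the heat flow via the EVI to produce the entropy-difference term) matches one half of the paper's proof, but the way you propose to close the argument has a genuine gap. The flow interchange only yields the Fisher-type quantity $\intrd F''(u^*)|\nabla u^*|^2\dd x$, and your plan to upgrade this to $\intrd |\nabla F'(u^*)|^2\dd x = \intrd F''(u^*)^2|\nabla u^*|^2\dd x$ rests on a uniform $L^\infty$-bound $u^*\le\bar U$ for the JKO minimizer, obtained by adapting the mass-relocation argument of Proposition~\ref{prp:steady}. This cannot work: the competitor must be compared for the full functional $\nrg_{\eps,\tau}$, and the Wasserstein penalty of relocating mass $\sigma$ from the set $\{u^*>\bar U\}$ (which may lie arbitrarily far from $\underline x_\Phi$) to the cube near $\underline x_\Phi$ is of order $\sigma\cdot(\mathrm{distance})^2/\tau$ plus a cross term $\wass(u^*,\hat u)\sqrt{\sigma}\,\mathrm{distance}/\tau$, not of order $\sigma$, and it carries a factor $1/\tau$ that the $\tau$-independent energy gain $\sigma$ cannot absorb. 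In fact no such bound can hold with $\bar U$ independent of $(\hat u,\hat v)$ and $\tau$: as $\tau\downarrow0$ the minimizer $u^*$ approaches $\hat u$, which is an arbitrary finite-energy density and need not be bounded at all, while the constant $C$ in \eqref{eq:allregular} must be independent of the datum. The paper avoids exactly this obstruction by splitting the estimate: the flow interchange is only used for the truncated quantity $\nabla[F'(u^*)]_{F'(1)}$ (where $u^*<1$, so $F''$ is bounded and the Fisher information suffices), while the region $u^*\ge1$ is controlled by the \emph{weighted} estimate $\intrd u^*|\nabla F'(u^*)|^2\dd x$ of Lemma~\ref{lem:onestepuFprime}, which comes from the Kantorovich-potential identity \eqref{eq:optpotentials} and is paid for by the term $[\nrg_\eps(\hat u,\hat v)-\nrg_\eps(u^*,v^*)]/\tau$; the two are glued by the pointwise inequality $|\nabla F'(u^*)|^2\le|\nabla[F'(u^*)]_{F'(1)}|^2+u^*|\nabla F'(u^*)|^2$. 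Your proposal never uses the Euler--Lagrange/optimal-potential relation, so it has no substitute for this large-density control.

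A secondary, more repairable issue: in the flow-interchange step you propose to expand $\eps\nabla\partial_u h$ through $\theta_u$ and absorb the mixed terms via the swap condition and Young's inequality. The swap condition bounds $\partial_\eta\theta_u$ by $W\sqrt{v/u}$ and is tailored to integrals weighted by $u$ (as in Lemma~\ref{lem:onestepuFprime}, where $u\Theta_{u,\eta}^2\le W^2v$ converts a $u$-weight into a $v$-weight); in the unweighted Fisher-type integrals produced by the heat flow, the resulting term $G''(V_s)\tfrac{V_s}{U_s}|\nabla U_s|^2$ is not controlled by $F''(U_s)|\nabla U_s|^2+G''(V_s)|\nabla V_s|^2$, so the absorption does not go through as stated. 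The paper sidesteps this entirely by invoking the joint convexity hypothesis \eqref{hyp:hconvex}, i.e.\ the matrix bound \eqref{eq:Hqual} $\dff^2H_\eps\ge\tfrac12\,\mathrm{diag}(F'',G'')$, which handles the coupling in one stroke; you should do the same rather than introduce the swap condition at this point.
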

The \emph{third result}, which contains the essence of the proof of Theorem~\ref{thm:longtime},
is concerned with proximity of $\hat u$ and $\hat v$ to the respective stationary solutions $\bar u_\eps$ and $\bar v_\eps$.
Recall for a given strictly convex function $J \colon \Rnn\to\R$ the definition of the Bregman divergence $d_J(\cdot|\cdot) \colon \Rnn\times\Rnn\to\Rnn$ as
\begin{align}
  \label{eq:bregman}
  d_J(s|\bar s) \coloneqq J(s) - \big[J(\bar s) + (s-\bar s)J'(\bar s)\big].
\end{align}
By strict convexity of~$J$, $d_J(s|\bar s)$ is always non-negative, and is zero if and only if $s=\bar s$.
Next, introduce the relative entropy functionals~$\lyp_1$ and~$\lyp_2$ on $\prbtwo$ by
\begin{align}
  \label{eq:lypuv}
  \lyp_1(u) \coloneqq  \intrd \big[ d_F(u|\bar u_\eps) + u(\Phi-U_\eps)_+\big]\dd x,
  \quad
  \lyp_2(v) \coloneqq  \intrd \big[ d_G(v|\bar v_\eps) + v(\Psi-V_\eps)_+\big]\dd x,  
\end{align}
which are clearly non-negative (as sums of non-negative parts), and zero if and only if $u=\bar u_\eps$ and $v=\bar v_\eps$, respectively.
Finally, define~$\lyp$ on $\prbtwotwo$ by
\begin{align}
  \label{eq:deflyp}
  \lyp(u,v) = \lyp_1(u) + \lyp_2(v).
\end{align}
\begin{proposition}
  \label{prp:core}
  There is a constant $K>0$ independent of $(\hat u,\hat v)$ such that
  \begin{align}
    \label{eq:coreest}
    \lyp(\hat u,\hat v) - \lyp(u^*,v^*) \ge 2\tau(\Lambda-K\eps) \lyp(u^*,v^*).
  \end{align}
\end{proposition}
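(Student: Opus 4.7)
The plan is to treat $\lyp$ as the minimum-free part of the \emph{auxiliary} energy
\[
  \auxe_\eps(u,v)\coloneqq\intrd\big[F(u)+u(\Phi+\eps\bar\Theta_u)+G(v)+v(\Psi+\eps\bar\Theta_v)\big]\dd x,
\]
which is $(\Lambda-K_0\eps)$-uniformly displacement convex on $\prbtwotwo$ with respect to $\dwass$: the potentials $\Phi+\eps\bar\Theta_u$ and $\Psi+\eps\bar\Theta_v$ are $(\Lambda-K_0\eps)$-convex by Corollary~\ref{cor:semiconvex},~$F$ and~$G$ satisfy McCann's condition~\eqref{eq:mccann}, and Lemma~\ref{lem:doublegeodesic} lifts the coordinate-wise convexity to the product. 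The Euler--Lagrange system~\eqref{eq:EL} identifies $(\bar u_\eps,\bar v_\eps)$ as the unique minimizer of $\auxe_\eps$, and a short computation using~\eqref{eq:EL} and~\eqref{eq:bregman} yields the identity $\lyp=\auxe_\eps-\auxe_\eps(\bar u_\eps,\bar v_\eps)$. Applying the displacement convexity along the Wasserstein geodesic joining $(u^*,v^*)$ to $(\hat u,\hat v)$ thus gives
\begin{equation*}
  \lyp(\hat u,\hat v)-\lyp(u^*,v^*)\ge\intrd u^*(T_u-\id)\cdot\mathbf{b}\dd x+\intrd v^*(T_v-\id)\cdot\mathbf{b}'\dd x+\tfrac{\Lambda-K_0\eps}{2}\dwass\big((u^*,v^*),(\hat u,\hat v)\big)^2,
\end{equation*}
with $\mathbf{b}\coloneqq\nabla[F'(u^*)+\Phi+\eps\bar\Theta_u]$, $\mathbf{b}'\coloneqq\nabla[G'(v^*)+\Psi+\eps\bar\Theta_v]$, and $T_u,T_v$ the respective optimal transport maps.

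Next, I would convert the first-order term via the JKO minimality of $(u^*,v^*)$: varying by smooth compactly supported push-forward flows, justified through~\eqref{eq:dweak} and the $H^1$-bound of Proposition~\ref{prp:regular}, produces the identity
\begin{equation*}
  \intrd u^*(T_u-\id)\cdot\mathbf{b}\dd x+\intrd v^*(T_v-\id)\cdot\mathbf{b}'\dd x=\tau\bigg[\intrd u^*\mathbf{a}\cdot\mathbf{b}\dd x+\intrd v^*\mathbf{a}'\cdot\mathbf{b}'\dd x\bigg],
\end{equation*}
where $\mathbf{a}\coloneqq\nabla[F'(u^*)+\Phi+\eps\partial_uh(u^*,v^*)]$ and $\mathbf{a}'\coloneqq\nabla[G'(v^*)+\Psi+\eps\partial_vh(u^*,v^*)]$; note that $\mathbf{a}-\mathbf{b}=\eps\nabla[\partial_uh(u^*,v^*)-\bar\Theta_u]$ is the $O(\eps)$ drift discrepancy between the actual and auxiliary gradient flows. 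The proof then reduces to the coercivity estimate
\begin{equation*}
  \intrd u^*\mathbf{a}\cdot\mathbf{b}\dd x+\intrd v^*\mathbf{a}'\cdot\mathbf{b}'\dd x\ge(1-K_1\eps)\bigg[\intrd u^*|\mathbf{b}|^2\dd x+\intrd v^*|\mathbf{b}'|^2\dd x\bigg],
\end{equation*}
because the functional inequality~\eqref{eq:thisone} and its $v$-counterpart combine to give $\int u^*|\mathbf{b}|^2\dd x+\int v^*|\mathbf{b}'|^2\dd x\ge2(\Lambda-K_0\eps)\lyp(u^*,v^*)$, so that discarding the non-negative Wasserstein term yields $\lyp(\hat u,\hat v)-\lyp(u^*,v^*)\ge2\tau(\Lambda-K\eps)\lyp(u^*,v^*)$ with $K\coloneqq K_0+\Lambda K_1$.

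The hard part is the coercivity estimate. Expanding $\mathbf{a}\cdot\mathbf{b}=|\mathbf{b}|^2+\eps\mathbf{b}\cdot\nabla[\partial_uh(u^*,v^*)-\bar\Theta_u]$ and using the chain rule $\nabla\partial_uh(u^*,v^*)=\partial_\rho\theta_u\,\nabla F'(u^*)+\partial_\eta\theta_u\,\nabla G'(v^*)$, the delicate off-diagonal contribution is $\eps\int u^*\partial_\eta\theta_u\,\mathbf{b}\cdot\nabla G'(v^*)\dd x$. The swap condition~\eqref{hyp:swap} supplies $u^*|\partial_\eta\theta_u|^2\le W^2v^*$, transferring the $u^*$-weight to $v^*$; substituting $\nabla G'(v^*)=\mathbf{b}'-\nabla\Psi-\eps\nabla\bar\Theta_v$ and using the Euler--Lagrange identity $\nabla G'(\bar v_\eps)=-\nabla(\Psi+\eps\bar\Theta_v)$ on~$\spt{v}$, together with the uniform $C^1$-bounds on $F'(\bar u_\eps),G'(\bar v_\eps)$ from Proposition~\ref{prp:regularity}, then expresses the non-$\mathbf{b}'$ remainder as a quantity supported on the fixed compact set $\spt{v}$ and controlled there. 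A Young's inequality with parameter of order~$\eps$ absorbs the off-diagonal contribution into $\int v^*|\mathbf{b}'|^2\dd x$ (and the symmetric $v$-analogue into $\int u^*|\mathbf{b}|^2\dd x$), the main delicate point being to distinguish contributions inside $\spt{u}\cap\spt{v}$ from those outside in order to avoid unabsorbable additive constants. The quantitative smallness condition $12\bar\eps^2(A^2+W)\le 1$ from~\eqref{eqn_eps_bar} is precisely what keeps the resulting prefactor $K_1\eps$ strictly below~$1$, so that the coercivity estimate holds with positive slack.
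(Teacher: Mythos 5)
Your overall skeleton is the same as the paper's: the above-tangent inequality for the displacement convex Lyapunov functional, substitution of the JKO Euler--Lagrange identity $T_u-\id=\tau\,\nabla[F'(u^*)+\Phi+\eps\partial_uh(u^*,v^*)]$ (Lemma~\ref{lem:optpotentials}), a Cauchy--Schwarz/Young argument on the $O(\eps)$ drift discrepancy using the swap condition and the $A$-bounds, and the conclusion via~\eqref{eq:mildconvex}. The gap is in the ``hard part'', i.e.\ the treatment of the remainder after you substitute $\nabla G'(v^*)=\mathbf b'-\nabla(\Psi+\eps\bar\Theta_v)$. First, the claim that the non-$\mathbf b'$ remainder is ``supported on the fixed compact set $\spt{v}$'' is false: by the stationary Euler--Lagrange system, $\nabla(\Psi+\eps\bar\Theta_v)=-\nabla G'(\bar v_\eps)-\nabla(\Psi-V_\eps)_+$, and outside $\spt{v}$ the contribution $\nabla(\Psi-V_\eps)_+$ is nonzero and grows linearly, while $u^*$ need not vanish there. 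The paper controls exactly this region by the pointwise bound $u(\partial_\rho\theta_u)^2\le C F(u)$ (coming from $2$-degeneracy and~\eqref{hyp:powerF}) together with~\eqref{eq:Phigradbound} and the identity $d_F(u|\bar u_\eps)=F(u)$ on $\{\Phi>U_\eps\}$, which yields a bound by $\eps\,C\,\lyp_1(u^*)$ --- none of which appears in your sketch.

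Second, and more fundamentally: even on $\spt{v}$, where $\nabla(\Psi+\eps\bar\Theta_v)=-\nabla G'(\bar v_\eps)$ is bounded, estimating the term by boundedness plus a Young inequality produces, besides the absorbable piece $\eps\int u^*|\mathbf b|^2\dd x$, an additive remainder of the form $\eps\tilde B^2\int u^*(\partial_\eta\theta_u)^2\dd x\lesssim\eps$, i.e.\ an $O(\eps)$ \emph{constant}, not a multiple of $\lyp(u^*,v^*)$. With such a constant the one-step estimate degrades to $\lyp(\hat u,\hat v)-\lyp(u^*,v^*)\ge 2\tau(\Lambda-K\eps)\lyp(u^*,v^*)-C\tau\eps$, which only proves convergence to an $O(\eps)$-neighbourhood of equilibrium and not~\eqref{eq:coreest}. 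The paper avoids this by never estimating $\Theta_{u,\eta}$ (or $\Theta_{u,\rho}$) against $\nabla G'(\bar v_\eps)$, $\nabla F'(\bar u_\eps)$ in absolute terms: it keeps the \emph{differences} $(\Theta_{u,\eta}-\bar\Theta_{u,\eta})$, $(\Theta_{u,\rho}-\bar\Theta_{u,\rho})$ in front of these bounded gradients and bounds them through the Bregman-distance estimate of Lemma~\ref{lem:bregman} by $\kappa\,\lyp(u^*,v^*)$; this comparison with the steady state is the essential ingredient missing from your argument. A related smaller point: your intermediate ``coercivity estimate'' with purely multiplicative error $(1-K_1\eps)$ in the dissipation is not what this route yields directly; one obtains an extra $-\eps C\lyp(u^*,v^*)$, which is harmless only because it can be converted via~\eqref{eq:mildconvex} when assembling the final constant $K$.
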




Our strategy for proving Propositions \ref{prp:dweak}, \ref{prp:regular}, and \ref{prp:core} is the following.
First, we prove these results \emph{under the additional hypothesis on $(\hat u,\hat v)$}:
\begin{align}
  \label{eq:addhypo}
  \text{$\hat u$, $\hat v$ are positive a.e.\ on a ball $\ball_R$ of some radius $R>0$, and vanish a.e.\ outside.}
\end{align}
In subsection~\ref{sct:removehypo},
we remove this additional hypothesis and generalize the propositions to arbitrary data $(\hat u,\hat v)$ of finite energy.

In subsections \ref{sct:weakform}--\ref{sct:dissipation} below,
the datum $(\hat u,\hat v)$ is fixed and satisfies~\eqref{eq:addhypo}.
Accordingly, $(u^*,v^*)$ is the minimizer of $\nrg_{\eps,\tau}\big(\cdot |(\hat u,\hat v)\big)$ in $\prbtwotwo$.
To fix notations, denote by $(\varphi_u,\psi_u)$ and by $(\varphi_v,\psi_v)$ the optimal pairs of $c$-conjugate potentials
for the transports from $u^*$ to $\hat u$, and from $v^*$ to $\hat v$, respectively.
By Section~\ref{sct:wasserstein}, these potentials are $u^*\leb$-a.e.\ uniquely determined, up to a global constant;
we fix points $\bar x_u$ and $\bar x_v$ in the support of $u$ and of $v$, respectively,
and normalize $\varphi_u(\bar x_u)=0$ and $\varphi_v(\bar x_v)=0$. 

\subsection{Euler--Lagrange equation}
\label{sct:weakform}
In this section, we prove Proposition~\ref{prp:dweak} under the additional hypothesis~\eqref{eq:addhypo}.
\begin{lemma}
  \label{lem:optpotentials}
  There are constants $C_u,C_v\in\R$ such that
  \begin{equation}
    \label{eq:optpotentials}
    \begin{split}
      C_u-\frac{\varphi_u}{\tau} &= F'(u^*) + \eps\partial_uh(u^*,v^*) + \Phi \qquad \text{$u^*\leb$-a.e.}, \\
      C_v-\frac{\varphi_v}{\tau} &= G'(v^*) + \eps\partial_vh(u^*,v^*) + \Psi \qquad \text{$v^*\leb$-a.e.}.
    \end{split}
  \end{equation}
\end{lemma}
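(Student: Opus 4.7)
The plan is to use the Kantorovich duality \eqref{eq:defW} to replace each squared Wasserstein distance in $\nrg_{\eps,\tau}$ by a linear-in-density lower bound that is saturated at $(u^*,v^*)$, and then to take admissible density variations of the resulting auxiliary functional. Linearization in the Wasserstein part is the whole point: once it is achieved, the Euler--Lagrange identity reduces to a textbook JKO calculation.

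First I would record that, since $(\varphi_u,\psi_u)$ and $(\varphi_v,\psi_v)$ are optimal $c$-conjugate pairs, \eqref{eq:defW} yields
\[ \tfrac12\wass(u,\hat u)^2 \ge \intrd \varphi_u u\dd x + \intrd \psi_u \hat u\dd x, \qquad \tfrac12\wass(v,\hat v)^2 \ge \intrd \varphi_v v\dd x + \intrd \psi_v \hat v\dd x, \]
for every $(u,v)\in\prbtwotwo$, with equality at $(u^*,v^*)$. Introducing
\[ \auxe(u,v) \coloneqq \frac{1}{\tau}\bigg[\intrd \varphi_u u\dd x + \intrd \psi_u \hat u\dd x + \intrd \varphi_v v\dd x + \intrd \psi_v \hat v\dd x\bigg] + \nrg_\eps(u,v), \]
one then has $\auxe(u,v) \le \nrg_{\eps,\tau}\big((u,v)\mid(\hat u,\hat v)\big)$ pointwise, with equality at $(u^*,v^*)$. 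Hence $(u^*,v^*)$ is simultaneously a minimizer of $\auxe$ on $\prbtwotwo$, and $\auxe$ depends linearly on $(u,v)$ through the Wasserstein part.

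Next I would perform admissible density perturbations. Fix $\delta>0$ and $\xi \in C^\infty_c(\R^d)$ with $\intrd \xi\dd x = 0$ and $\operatorname{supp}(\xi)\subset\{u^* > \delta\}$. For $|s| \le \delta/(2\|\xi\|_\infty)$ the density $u^*+s\xi$ is non-negative, of unit mass, and of finite second moment, hence lies in $\prbtwo$; the minimality of $(u^*,v^*)$ for $\auxe$ thus gives $\auxe(u^*+s\xi,v^*) \ge \auxe(u^*,v^*)$. Since on $\operatorname{supp}(\xi)$ the values $u^*+s\xi$ stay uniformly away from zero, the integrands $F'(u^*+s\xi)$ and $\partial_u h(u^*+s\xi,v^*)$ remain uniformly bounded for small $|s|$ by the smoothness of $F$ on $\Rp$ and of $h$ on $\Rp^2$; and $\varphi_u$ is locally bounded as a $c$-concave Kantorovich potential. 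Dominated convergence then permits differentiation at $s=0$, giving
\[ 0 = \intrd \bigg[\frac{\varphi_u}{\tau} + F'(u^*) + \eps\partial_u h(u^*,v^*) + \Phi\bigg]\xi\dd x. \]

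Because this identity holds for every such $\xi$ of zero mean supported in $\{u^* > \delta\}$, the du Bois-Reymond lemma produces a constant $C_u^{(\delta)}\in\R$ with
\[ \frac{\varphi_u}{\tau} + F'(u^*) + \eps\partial_u h(u^*,v^*) + \Phi = C_u^{(\delta)}\quad\text{a.e.\ on } \{u^* > \delta\}. \]
These constants must agree on overlaps, hence are independent of $\delta$; letting $\delta \to 0$ therefore yields the first identity of \eqref{eq:optpotentials} $u^*\leb$-a.e.\ with a single constant $C_u$. The second identity for $v^*$ follows by the symmetric variation in the second argument. The only real subtlety in the argument is the justification of differentiation under the integral, and this is precisely why one localizes the test functions to superlevel sets $\{u^*>\delta\}$: it keeps the nonlinear integrands uniformly bounded on the relevant support, making the first-variation computation routine.
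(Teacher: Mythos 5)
There is a genuine gap at the pivotal step ``Hence $(u^*,v^*)$ is simultaneously a minimizer of $\auxe$ on $\prbtwotwo$''. By the duality \eqref{eq:defW}, your auxiliary functional satisfies $\auxe\le\nrg_{\eps,\tau}\big(\cdot\,\big|(\hat u,\hat v)\big)$ with equality at $(u^*,v^*)$, i.e.\ $\auxe$ is a \emph{minorant} touching the penalized energy at its minimizer. Minimality does not transfer to a touching minorant: from $\auxe(u,v)\le\nrg_{\eps,\tau}\big((u,v)\big|(\hat u,\hat v)\big)$ and $\nrg_{\eps,\tau}\big((u,v)\big|(\hat u,\hat v)\big)\ge\auxe(u^*,v^*)$ one cannot conclude $\auxe(u,v)\ge\auxe(u^*,v^*)$ (that implication would require a \emph{majorant}). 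What your step silently assumes is that along the perturbation $u^*+s\xi$ the term $\frac1{2\tau}\wass(\cdot,\hat u)^2$ is differentiable with derivative $\frac1\tau\intrd\varphi_u\,\xi\dd x$, i.e.\ that the fixed potential $\varphi_u$ stays optimal to first order. This is exactly the technical crux, and it is why the paper's proof works with the $\delta$-dependent optimal potentials $\varphi_u^\delta$ of the perturbed densities, obtains the one-sided inequality \eqref{eq:dividebydelta}, and then proves $\varphi_u^\delta\to\varphi_u$ via the uniform Lipschitz bound \eqref{eq:phiLip}, Arzel\`a--Ascoli, and the $u^*\leb$-a.e.\ uniqueness of the optimal potential, which is precisely where the additional hypothesis \eqref{eq:addhypo} on $\hat u$ enters. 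Your proposal never uses \eqref{eq:addhypo}, which is a warning sign: without some such argument the linearization of the Wasserstein term is unjustified.

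Two further (more easily repairable) points. First, you take $\xi\in C^\infty_c(\R^d)$ with $\operatorname{supp}\xi\subset\{u^*>\delta\}$; but $\{u^*>\delta\}$ is merely a measurable set (a priori $u^*$ is only an $L^2$ density) and may have empty interior, so such smooth test functions need not exist. The paper avoids this by using multiplicative perturbations $u^\delta=(1+\delta\rho)u^*$ with $\rho\in L^\infty$ of compact support and zero $u^*\leb$-mean, which preserve non-negativity without any openness assumption and directly yield constancy $u^*\leb$-a.e. Second, the claim that $F'(u^*+s\xi)$ is ``uniformly bounded'' on $\operatorname{supp}\xi$ is not justified, since $u^*$ need not be bounded above there; differentiation under the integral should instead be argued via convexity together with the doubling condition \eqref{eq:doubling} and \eqref{hyp:hbound}, as in the paper.
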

\begin{proof}
  We only consider the $u$-component.
  Let $\rho\in L^\infty(\R^d)$ be given, which vanishes outside of some open set $\Omega\subset\R^d$ with compact closure.
  We assume that
  \begin{align}
    \label{eq:tozero}
    \intrd \rho u^* \dd x = 0,
  \end{align}
  which can always be achieved by addition of a suitable multiple of the indicator function of~$\Omega$ to~$\rho$.
  For $\delta\in\R$ such that $0 < |\delta|\,\|\rho\|_{L^\infty}<1$,
  let $u^\delta\coloneqq (1+\delta\rho)u^*$, and note that $u^\delta\in\prbtwo$ thanks to identity~\eqref{eq:tozero}.
  Next, let $(\varphi_u^\delta,\psi_u^\delta)$ be an optimal pair of $c$-conjugate potentials
  for the transport from~$u^\delta$ to~$\hat u$ with the normalization $\varphi_u^\delta(\bar x_u)=0$.
  Recalling that $\hat u$ satisfies the additional hypothesis~\eqref{eq:addhypo},
  $\varphi_u^\delta$ is uniquely determined $u^* \leb$-a.e.\ (note that $u^\delta\leb$ and $u^*\leb$ have the same negligible sets),
  up to a global constant.
  Since $\bar x_u$ is in the support of $u^\delta$, we can normalize $\varphi_u^\delta$ by $\varphi_u^\delta(\bar x_u)=0$.
  
  For later reference, recall that the auxiliary potential $\tilde\varphi_u^\delta(x):=\frac12|x|^2-\varphi_u^\delta(x)$
  is a proper, lower semi-continuous and convex function.
  Moreover, as the associated optimal transport map $T_u^\delta=\nabla\tilde\varphi_u^\delta$
  maps $u^*\leb$-almost surely onto  the support of $\hat u$, i.e., into $\ball_R$,
  it follows that $|\nabla\tilde\varphi_u^\delta|\le R$ $u^*\leb$-a.e.
  For convenience and without loss of generality, we may actually assume that on $u^*\leb$-negligible sets,
  $\varphi_u^\delta$ is defined such that
  \begin{align}
    \label{eq:phiLip}
    |\nabla\tilde\varphi_u^\delta|\le R\quad \text{a.e. on $\R^d$}.
  \end{align}  
  Clearly, $(\varphi_u^\delta,\psi_u^\delta)$ is a (in general sub-optimal) pair of $c$-conjugate potentials for the transport from~$u^*$ to~$\hat u$.
  Recalling further the definition of $(u^*,v^*)$ as minimizer of $\nrg_{\eps,\tau}\big(\cdot\big|(\hat u,\hat v)\big)$,
  we conclude the following chain of inequalities:
  \begin{align*}
    &\frac1{\tau}\left(\intrd \varphi_u^\delta(x)u^*(x)\dd x+ \intrd\psi_u^\delta(y)\hat u(y)\dd y
      + \frac12\wass(v^*,\hat v)^2\right) + \nrg_\eps(u^*,v^*) \\
    &\le \nrg_{\eps,\tau}\big((u^*,v^*)\big|(\hat u,\hat v)\big) \\
    &\le \nrg_{\eps,\tau}\big((u^\delta,v^*)\big|(\hat u,\hat v)\big) \\
    & = \frac1{\tau}\left(\intrd \varphi_u^\delta(x)u^\delta(x)\dd x+ \intrd\psi_u^\delta(y)\hat u(y)\dd y
      + \frac12\wass(v^*,\hat v)^2\right) + \nrg_\eps(u^\delta,v^*) .
  \end{align*}
  It thus follows that
  \begin{align}
    \label{eq:dividebydelta}
    0\le\intrd\frac{\varphi_u^\delta}{\tau}\frac{u^\delta-u^*}\delta\dd x
    + \frac1\delta\big(\nrg_\eps(u^\delta,v^*)-\nrg_\eps(u^*,v^*)\big).
  \end{align}
  We shall now pass to the limit $\delta\downarrow0$.
  Since $\tilde\varphi_u^\delta(\bar x_u)=0$ is a fixed,
  and since~\eqref{eq:phiLip} gives a uniform Lipschitz bound,
  the Arzel\`{a}--Ascoli theorem yields local uniform convergence of $\tilde\varphi_u^{\delta_k}$
  to a limit $\tilde\varphi_u^0$ along a suitable sequence $\delta_k\downarrow0$.
  We wish to show that
  \begin{align}
    \label{eq:filippo}
    \tilde\varphi_u^0(x)=\tilde\varphi_u(x):=\frac12|x|^2-\varphi_u(x) \quad \text{for $u^*\leb$-a.e. $x$}.
  \end{align}
  By convexity of the $\tilde\varphi_u^{\delta_k}$, local uniform convergence of the function values
  implies $\leb$-a.e.\ convergence of the gradients.
  So in particular, $T^{\delta_k}_u\to T^0_u:=\nabla\tilde\varphi^0_u$ $u^*\leb$-a.e.
  But $T^0_u\#u^*=\hat u$, since for every $\omega\in C_c(\R^d)$:
  \begin{align*}
    \intrd \omega\,\hat u\dd y = \intrd \omega\, T^{\delta_k}_u\#u^\delta\dd y = \intrd \omega\circ T^{\delta_k}_u\, u^\delta\dd x
    \to \intrd \omega\circ T^0_u\, u^*\dd x = \intrd \omega\, T^0_u\#u^*\dd x.
  \end{align*}
  The limit in the chain above is justified by the dominated convergence theorem,
  since $\omega\circ T^{\delta_k}_u\to\omega\circ T^0_u$ $u^*\leb$-a.e.,
  since $\omega$ is bounded,
  and since $u^\delta-u^*=\delta\rho u^*$ by construction.
  This means that $\tilde\varphi^0_u$ is an auxiliary optimal potential for the transport from $u^*$ to $\hat u$.
  Using again $u^*\leb$-a.e.\ uniqueness of such an optimal potential up to a global constant thanks to~\eqref{eq:addhypo},
  and observing that $\tilde\varphi_u^0(\bar x_u)=0$ by local uniform convergence,
  we conclude~\eqref{eq:filippo}.  
  Now, since $(u^\delta-u^*)/\delta=u^*\rho$ with~$\rho$ bounded,
  it follows that
  \begin{align*}
    \lim_{k\to\infty}\intrd\frac{\varphi_u^{\delta_k}}{\tau}\frac{u^{\delta_k}-u^*}{\delta_k}\dd x
    = \intrd \frac{\varphi_u}{\tau}u^*\rho\dd x.    
  \end{align*}
  Note that local uniform convergence has been sufficient here since $\rho$ vanishes outside the compact set $\bar\Omega$.

  Further, since $H_\eps(u^*,v^*)$ is integrable on~$\R^d$,
  so is $H_\eps\big((1+\delta\rho)u^*,v^*)$ thanks to the doubling condition~\eqref{eq:doubling} and to~\eqref{hyp:hbound},
  and the variational derivative of~$\nrg_\eps$ in direction~$\rho u^*$ is readily computed by standard methods.
  With~\eqref{eq:dividebydelta}, this leads to
  \begin{align}
    \label{eq:oneside}
    0\le \intrd \left[\frac{\varphi_u}{\tau}+F'(u^*)+\Phi+\eps\partial_uh(u^*,v^*)\right]u^*\rho\dd x.
  \end{align}
  The same argument applies for $0>\delta>-1/\|\rho\|_{L^\infty}$, when the relation in~\eqref{eq:dividebydelta} is reversed.
  Consequently,~\eqref{eq:oneside} holds with the reversed relation as well, i.e., it is an equality.
  Since $\rho$ has been an arbitrary bounded function of compact support,
  only subject to the normalization~\eqref{eq:tozero},
  the term in square parenthesis above equals to a global constant $C_u$.
  The value of $C_u$ is determined by our normalization $\varphi_u(\bar x_u)=0$.
  This finishes the proof of the lemma.
\end{proof}
\begin{proof}[Proof of Proposition~\ref{prp:dweak} under the additional hypothesis~\eqref{eq:addhypo}]
  Let $T(x)=x-\nabla\varphi_u(x)$ be the optimal transport map from $u^*$ to $u$,
  Recalling that $\hat u=T\#u^*$ and the definition~\eqref{eq:defpush} of the push-forward,
  we obtain
  \begin{align*}
    \intrd \frac{u^*-\hat u}{\tau}\zeta\dd x
    &= \frac1{\tau} \left(\intrd \zeta(x)u^*(x)\dd x - \intrd \zeta\circ T(x)u^*(x)\dd x\right) \\
    &= \frac1{\tau}  \intrd \big( \zeta-\zeta\circ T \big) u^*\dd x \\
    & = \frac1{\tau}  \intrd \Big[ \nabla\zeta(x)\cdot\big(x-T(x)\big) - \frac12\big(x-T(x)\big)^T\nabla^2\zeta(m_x)\big(x-T(x)\big) \Big] u^*(x)\dd x,    
  \end{align*}
  where~$m_x$ is a suitable intermediate point on the line connecting~$x$ to~$T(x)$.
  In summary,
  \begin{align}
    \label{eq:preweak}
    \intrd \frac{u^*-\hat u}{\tau}\zeta\dd x
    = \intrd u^*\nabla\left(\frac{\varphi_u}{\tau}\right)\cdot\nabla\zeta\dd x + R_u,
  \end{align}
  where, thanks to~\eqref{eq:monge},
  \begin{align}
    \label{eq:prerest}
    |R_u| \le \frac1{2\tau}\sup\|\nabla^2\zeta\|\intrd |\nabla\varphi_u|^2u^*(x)\dd x
    \le \frac{\|\zeta\|_{C^2}}{2\tau}\wass(u^*,\hat u)^2.
  \end{align}
  Substitution of~\eqref{eq:optpotentials} -- which is relying on the additional hypothesis~\eqref{eq:addhypo} -- into~\eqref{eq:preweak} above produces the first equation in~\eqref{eq:dweak}.

  The $v$-component is treated in a similar way, leading to the second equation in~\eqref{eq:dweak}.
  Adding the two estimates of the form~\eqref{eq:prerest} and using~\eqref{eq:onestepmono} we obtain
  \begin{equation*}
    |R_u|+|R_v|  \le \frac{\|\zeta\|_{C^2}}{2\tau}\big[\wass(u^*,\hat u)^2+\wass(v^*,\hat v)^2\big]
    \le \|\zeta\|_{C^2}\big[\nrg_\eps(\hat u,\hat v)-\nrg_\eps(u^*,v^*)\big]. \qedhere
  \end{equation*}
\end{proof}

\subsection{Regularity estimates}
In this section, we prove Proposition~\ref{prp:regular}, subject to~\eqref{eq:addhypo}.
The proof follows from Lemmas~\ref{lem:onestepuFprime} and~\ref{lem:onestepuFprime_cut} below,
where the first condition in~\eqref{eqn_eps_bar} concerning the smallness of~$\bar \eps$ becomes relevant.
The additional hypothesis~\eqref{eq:addhypo} only enters indirectly, via Lemma~\ref{lem:optpotentials}.
\begin{lemma}
  \label{lem:onestepuFprime}
  With a constant $C$ independent of $(\hat u,\hat v)$, there holds
  \begin{align}
    \label{eq:highregular}
    \intrd \big[u^* |\nabla F'(u^*)|^2+ v^*|\nabla G'(v^*)|^2\big]\dd x
    \le C\left(\nrg_\eps(\hat u,\hat v)
    + \frac{\nrg_\eps(\hat u,\hat v)-\nrg_\eps(u^*,v^*)}{\tau}
    \right).
  \end{align}
  In particular, $\nabla F'(u^*)\in L^2(\R^d;u^*\leb)$ and $\nabla G'(v^*)\in L^2(\R^d;v^*\leb)$.
\end{lemma}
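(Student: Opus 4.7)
The plan is to differentiate the Euler--Lagrange identity~\eqref{eq:optpotentials}, weight by $\sqrt{u^*}$ (respectively $\sqrt{v^*}$), square, integrate, and invoke the swap condition to handle the resulting cross-coupling between the two equations.

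Since $\tilde\varphi_u\coloneqq\tfrac12|x|^2-\varphi_u$ is convex, the potential $\varphi_u$ is semi-concave and hence $\leb$-a.e.\ differentiable; together with the smoothness of $\Phi$ and the $C^2$-regularity of $\theta_u$ afforded by $2$-boundedness, this implies that both sides of the first equation in~\eqref{eq:optpotentials} are differentiable $u^*\leb$-a.e. Using the representation $\partial_uh(u^*,v^*)=\theta_u(F'(u^*),G'(v^*))$ from~\eqref{eq:deftheta} and the chain rule, I can solve for $\nabla F'(u^*)$ to obtain
\begin{align*}
    \nabla F'(u^*) = -\frac{\nabla\varphi_u}{\tau} - \nabla\Phi - \eps\,\partial_\rho\theta_u\,\nabla F'(u^*) - \eps\,\partial_\eta\theta_u\,\nabla G'(v^*)
\end{align*}
$u^*\leb$-a.e., with partial derivatives of $\theta_u$ evaluated at $(F'(u^*),G'(v^*))$. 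Writing $I\coloneqq\intrd u^*|\nabla F'(u^*)|^2\dd x$ and $J\coloneqq\intrd v^*|\nabla G'(v^*)|^2\dd x$, I square this identity, multiply by $u^*$, integrate, and apply $(a+b+c+d)^2\le 4(a^2+b^2+c^2+d^2)$ to split the right-hand side into four manageable pieces.

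Three of these contributions are routine. The transport term equals $\wass(u^*,\hat u)^2/\tau^2$ by~\eqref{eq:monge}, which~\eqref{eq:onestepmono} bounds by $2(\nrg_\eps(\hat u,\hat v)-\nrg_\eps(u^*,v^*))/\tau$. The potential term is controlled via~\eqref{eq:Phigradbound} together with the energy monotonicity $\nrg_\eps(u^*,v^*)\le\nrg_\eps(\hat u,\hat v)$ of Lemma~\ref{lem:jko}. The ``self-coupling'' piece $4\eps^2\intrd u^*|\partial_\rho\theta_u|^2|\nabla F'(u^*)|^2\dd x$ is bounded by $4\eps^2 A^2 I$ using the estimate~\eqref{eqn_def_A}. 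The delicate contribution is the ``cross-coupling'' piece $4\eps^2\intrd u^*|\partial_\eta\theta_u|^2|\nabla G'(v^*)|^2\dd x$, where the weight $u^*$ appears in front of a function of $v^*$ and $\nabla G'(v^*)$; the swap condition~\eqref{hyp:swap} is designed precisely for this situation, since squaring it yields the pointwise bound $u^*|\partial_\eta\theta_u(F'(u^*),G'(v^*))|^2\le W^2 v^*$, and therefore controls the cross term by a multiple of $J$.

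Running the analogous argument for the $v$-component (with the roles of $I$ and $J$ swapped, using the second half of~\eqref{hyp:swap}) and summing the two resulting estimates, the self- and cross-coupling contributions combine into an expression of the form $C\bar\eps^2(I+J)$. The first part of the smallness condition~\eqref{eqn_eps_bar} makes this coefficient strictly less than one, permitting absorption on the left-hand side and producing~\eqref{eq:highregular}. The main obstacle is clearly the cross-coupling term: without the swap condition, there is no reason for $u^*|\nabla G'(v^*)|^2$ even to be integrable (for instance on $\{u^*>0\}\cap\{v^*=0\}$, where the factor $v^*/u^*$ blows up), and it is exactly to rebalance this measure-weight mismatch between the two components that hypothesis~\eqref{hyp:swap} has been formulated.
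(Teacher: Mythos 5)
Your argument is correct and is essentially the paper's own proof: differentiate the Euler--Lagrange identity~\eqref{eq:optpotentials}, expand $\nabla\partial_uh(u^*,v^*)$ via $\theta_u$, multiply by $u^*$, integrate, bound the transport term with~\eqref{eq:monge} and~\eqref{eq:onestepmono}, the potential term with~\eqref{eq:Phigradbound} and energy monotonicity, the self-coupling with~\eqref{eqn_def_A}, the cross-coupling with the swap condition~\eqref{hyp:swap}, and absorb the $\eps^2(I+J)$ contribution using the smallness condition~\eqref{eqn_eps_bar}. The only deviations are cosmetic constants (your four-way splitting versus the paper's $\tfrac13$/factor-$2$ splitting, and $W^2$ in place of the paper's $W$), which do not affect the argument.
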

\begin{proof}
  Thanks to Lemma~\ref{lem:optpotentials} and the properties of $c$-conjugate potentials,
  the sum $F'(u^*)+\eps\partial_uh(u^*,v^*)+\Phi$ is differentiable $u^*\leb$-a.e.
  We apply the binomial theorem to the first equation in~\eqref{eq:optpotentials} and use~\eqref{eq:Phigradbound},
  obtaining
  \begin{align*}
    \frac13|\nabla F'(u^*)|^2
    &\le \big|\nabla\big[F'(u^*)+\eps\partial_uh(u^*,v^*)+\Phi\big]\big|^2
      + \eps^2|\nabla\partial_uh(u^*,v^*)|^2 + |\nabla\Phi|^2 \\
    &\le \left|-\nabla\frac{\varphi_u}\tau\right|^2
      + 2\eps^2\big(\partial_\rho\theta_u\big)^2|\nabla F'(u^*)|^2 
      + 2\eps^2\big(\partial_\eta\theta_u\big)^2|\nabla G'(v^*)|^2
      + \frac{2M^2}{\Lambda}\Phi.
  \end{align*}
  By means of~\eqref{eq:monge},
  the bound~\eqref{eqn_def_A} thanks to 2-boundedness and 2-degeneracy of $(F,G,h)$,
  and the swap condition~\eqref{hyp:swap},
  it follows that
  \begin{align*}
    \frac13\intrd u^*|\nabla F'(u^*)|^2\dd x
    &\le \left(\frac{\wass(u^*,\hat u)}\tau\right)^2
    +2\eps^2A^2\intrd u^*|\nabla F'(u^*)|^2\dd x \\
    & \qquad +2\eps^2W\intrd v^*|\nabla G'(v^*)|^2\dd x
    + \frac{2M^2}{\Lambda}\intrd\Phi u^*\dd x.
  \end{align*}
  In combination with the analogous estimate for~$v^*$ in place of~$u^*$,
  and observing that $H_\eps\ge0$,
  we obtain that
  \begin{align*}
    &\left(\frac13-2\eps^2(A^2+W)\right)
    \left(\intrd u^*|\nabla F'(u^*)|^2\dd x + \intrd v^*|\nabla G'(v^*)|^2\dd x\right) \\
    &\qquad \le \left(\frac{\wass(u^*,\hat u)}\tau\right)^2+\left(\frac{\wass(v^*,\hat v)}\tau\right)^2
    + \frac{2M^2}{\Lambda}\nrg_\eps(u^*,v^*).
  \end{align*}
  The result now follows using~\eqref{eq:onestepmono} and the choice of~$\bar \eps$ in~\eqref{eqn_eps_bar}.
\end{proof}
\begin{lemma}
  \label{lem:onestepuFprime_cut}
  With a constant $C$ independent of $(\hat u,\hat v)$, there holds
  \begin{align}
    \label{eq:lowregular}
    \intrd \big( \big|\nabla [F'(u^*)]_{F'(1)}\big|^2 + \big|\nabla [G'(v^*)]_{G'(1)}\big|^2\big) \dd x
    \le C\Big(1+\frac{\entt(\hat u,\hat v)-\entt(u^*,v^*)}\tau\Big),
  \end{align}
  where $[z]_k\coloneqq \min\{k,z\}$ is the cut-off at the value~$k$. 
\end{lemma}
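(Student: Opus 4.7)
The plan is to use the \emph{flow interchange technique}: I compare the minimizer $(u^*,v^*)$ with perturbations obtained by evolving each component separately under the heat flow, which is the $L^2$-Wasserstein gradient flow of the entropy $\ent$, and exploit the EVI~\eqref{eq:evi}. Let $(U_s)_{s\ge 0}$ solve $\partial_s U_s=\Delta U_s$ with $U_0=u^*$, and let $(V_s)_{s\ge 0}$ denote the analogous heat flow starting from $v^*$. Testing the minimality of $(u^*,v^*)$ against the competitors $(U_s,v^*)$ and $(u^*,V_s)$, dividing by $s>0$, and invoking the EVI~\eqref{eq:evi} on the resulting Wasserstein differences, I would obtain (for each component separately) and then sum:
\begin{equation*}
  \frac{\entt(\hat u,\hat v)-\entt(u^*,v^*)}{\tau}\ge -\liminf_{s\downarrow 0}\frac{\nrg_\eps(U_s,v^*)-\nrg_\eps(u^*,v^*)}{s}-\liminf_{s\downarrow 0}\frac{\nrg_\eps(u^*,V_s)-\nrg_\eps(u^*,v^*)}{s}.
\end{equation*}

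For $s>0$, the smoothness of $U_s$ permits the chain-rule identity $\tfrac{\dn}{\dn s}\nrg_\eps(U_s,v^*)=-\intrd\nabla[F'(U_s)+\Phi+\eps\partial_u h(U_s,v^*)]\cdot\nabla U_s\dd x$, and analogously for $V_s$. Taking $s\downarrow 0$ via Fatou's lemma, using narrow convergence $U_s\to u^*$ and lower semicontinuity of the Fisher-type dissipation $u\mapsto\intrd F''(u)|\nabla u|^2\dd x$, the two $\liminf$'s are bounded below so that
\begin{equation*}
  \frac{\entt(\hat u,\hat v)-\entt(u^*,v^*)}{\tau}\ge\intrd\bigl[F''(u^*)|\nabla u^*|^2+G''(v^*)|\nabla v^*|^2\bigr]\dd x+\intrd\nabla\Phi\cdot\nabla u^*\dd x+\intrd\nabla\Psi\cdot\nabla v^*\dd x+\eps I_h,
\end{equation*}
where $I_h\coloneqq\intrd[\partial_{uu}h\,|\nabla u^*|^2+2\partial_{uv}h\,\nabla u^*\cdot\nabla v^*+\partial_{vv}h\,|\nabla v^*|^2]\dd x$ with the $h$-derivatives evaluated at $(u^*,v^*)$.

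Next I handle the lower-order terms. Integration by parts gives $\intrd\nabla\Phi\cdot\nabla u^*\dd x=-\intrd u^*\Delta\Phi\dd x\ge -dM$ thanks to $\Delta\Phi\le dM$ from~\eqref{hyp:Phi} and $\intrd u^*\dd x=1$, and analogously for $\Psi$. The convexity hypothesis~\eqref{hyp:hconvex} is equivalent to the pointwise matrix inequality $2\eps^*\nabla^2 h(u,v)\ge-\operatorname{diag}(F''(u),G''(v))$; contracted with the vector $(\nabla u^*,\nabla v^*)$ at $(u^*,v^*)$, this yields for $\eps\le\eps^*$ the absorption bound $\eps I_h\ge-\tfrac12\intrd[F''(u^*)|\nabla u^*|^2+G''(v^*)|\nabla v^*|^2]\dd x$. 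Combining everything gives
\begin{equation*}
  \tfrac12\intrd\bigl[F''(u^*)|\nabla u^*|^2+G''(v^*)|\nabla v^*|^2\bigr]\dd x\le\frac{\entt(\hat u,\hat v)-\entt(u^*,v^*)}{\tau}+2dM.
\end{equation*}

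To conclude, observe that $[F'(u^*)]_{F'(1)}$ is constant on $\{u^*>1\}$, so its weak gradient equals $F''(u^*)\nabla u^*\,\mathbf{1}_{\{u^*\le 1\}}$. By~\eqref{hyp:powerF} and the smoothness of $F''$ on $(0,\infty)$, the constant $C_F\coloneqq\sup_{[0,1]}F''$ is finite (and similarly $C_G$ for $G''$); hence $|\nabla[F'(u^*)]_{F'(1)}|^2\le C_F\,F''(u^*)|\nabla u^*|^2$ and the analogous estimate holds for $v^*$. Inserting these into the previous bound delivers~\eqref{eq:lowregular}. The main technical obstacle is the rigorous passage $s\downarrow 0$ in the flow interchange, where the indefinite coupling contribution $\eps I_h$ must be controlled under narrow convergence; here the a priori bound $\nabla F'(u^*)\in L^2(\R^d;u^*\leb)$ from Lemma~\ref{lem:onestepuFprime}, combined with the swap condition~\eqref{hyp:swap} and $2$-boundedness, provides the uniform integrability needed to pass to the limit.
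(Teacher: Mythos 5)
Your overall strategy (flow interchange with the heat flow, EVI, absorption of the coupling via the joint convexity \eqref{hyp:hconvex}, and the elementary bound $|\nabla[F'(u)]_{F'(1)}|^2\le C_F\,F''(u)|\nabla u|^2$) is the same as the paper's, but your implementation has a genuine gap: you run the heat flow on each component \emph{separately}, using the competitors $(U_s,v^*)$ and $(u^*,V_s)$. Then the chain rule for $s\mapsto\nrg_\eps(U_s,v^*)$ produces the term $\nabla[\partial_u h(U_s,v^*)]\cdot\nabla U_s$, which contains $\partial_{uv}h(U_s,v^*)\,\nabla v^*$; at this stage of the argument $v^*$ is only known to be an $L^2$ probability density (Lemma~\ref{lem:onestepuFprime} gives $\nabla G'(v^*)\in L^2(\R^d;v^*\leb)$, not a weak gradient of $v^*$ itself), so this expression is not well defined, and the same problem occurs symmetrically for the second competitor. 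Worse, the absorption step via \eqref{hyp:hconvex} needs the \emph{full} Hessian quadratic form of $h$ contracted with $(\nabla u^*,\nabla v^*)$ in a single integral; with separate flows, each competitor only produces half of that form (one diagonal term plus one sign-indefinite cross term), so the pointwise convexity cannot be invoked at positive $s$. You are forced to first pass to the limit $s\downarrow0$ in the sign-indefinite cross terms and only then absorb — but Fatou does not apply to indefinite integrands, lower semicontinuity arguments do not either, and the uniform integrability you invoke (Lemma~\ref{lem:onestepuFprime} plus the swap condition \eqref{hyp:swap}) controls weighted $L^2$-norms of $\nabla F'(u^*)$, $\nabla G'(v^*)$, not convergence of products like $\partial_{uv}h(U_s,v^*)\nabla U_s\cdot\nabla v^*$. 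In effect the limit passage presupposes exactly the regularity of $(u^*,v^*)$ that the lemma is meant to establish.

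The paper sidesteps all of this by evolving \emph{both} components simultaneously, $(U_s,V_s)=(\knl_s\ast u^*,\knl_s\ast v^*)$: then for every $s>0$ both entries are smooth, the full quadratic form $\dff^2H_\eps(U_s,V_s)$ appears in one integral and is bounded below pointwise by $\frac12\operatorname{diag}(F''(U_s),G''(V_s))$ thanks to \eqref{eq:Hqual}, and the limit $s\downarrow0$ is taken only in the sign-definite, cut-off quantities $[F'(U_s)]_{F'(1)}$, $[G'(V_s)]_{G'(1)}$, which are uniformly bounded in $H^1(\R^d)$ (their $L^2$-norms are controlled by the unit mass because of the at most linear growth of $F'$, $G'$ near zero), so Alaoglu, Rellich and weak lower semicontinuity identify the limit and close the argument. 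To repair your proof, replace the two separate competitors by the joint one $(U_s,V_s)$ and postpone the limit until after the pointwise absorption; your final cut-off step then goes through as in the paper.
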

\begin{proof}
  The proof uses the method of flow interchange,
  which estimates the effect of variations
  of a Yosida-regularized \emph{non-convex} functional
  along the gradient flow of an auxiliary \emph{convex} functional.
  The method has been introduced in~\cite{MMS},
  unifying several similar ideas from the literature, see e.g.~\cite{JKO,GST}. 

  For all $s>0$, define perturbations $(U_s,V_s)\in\prbtwotwo$ of $(U_0,V_0)\coloneqq (u^*,v^*)$ as follows:
  \begin{align*}
    U_s \coloneqq  \knl_s\ast u^*,\ V_s \coloneqq  \knl_s\ast v^* \quad \text{with}\quad \knl_s(z) = (4\pi s)^{-d/2}\exp\left(-\frac{|z|^2}{4s}\right).
  \end{align*}
  Since $\knl_s(z)$ is the fundamental solution of the heat equation, it is well-known that $(s,x)\mapsto U_s(x)$ and $(s,x)\mapsto V_s(x)$ are $C^\infty$-smooth on $(0,\infty)\times\R^d$,
  with
  \begin{align}
    \label{eq:heat}
    \partial_sU_s = \Delta U_s, \quad \partial_sV_s = \Delta V_s
  \end{align}
  in the classical sense, and that $U_s\to u^*$ and $V_s\to v^*$ in $L^1(\R^d)$.
  Moreover, $(U_s)_{s>0}$ and $(V_s)_{s>0}$ --- considered as flows on $\prbtwo$ --- satisfy the \evi, see~\eqref{eq:evi}.

  We perform a detailed comparison of the $\nrg_{\eps,\tau}$-scores of $(U_s,V_s)$ and of $(u^*,v^*)$.
  By minimality,
  we know that $\nrg_{\eps,\tau}\big((u^*,v^*)\big|(\hat u,\hat v)\big)\le\nrg_{\eps,\tau}\big((U_r,V_r)\big|(\hat u,\hat v)\big)$;
  consequently, for each $\sigma>0$,
  \begin{align}
    \label{eq:preinterchange}
    \frac{\nrg_\eps(u^*,v^*)-\nrg_\eps(U_\sigma,V_\sigma)}\sigma
    \le \frac1{2\tau}\left(\frac{\wass(U_\sigma,\hat u)^2-\wass(u^*,\hat u)^2}\sigma+\frac{\wass(V_\sigma,\hat v)^2-\wass(v^*,\hat v)^2}\sigma\right).
  \end{align}
  We consider the limes superior as $\sigma\downarrow0$ on both sides.
  On the right-hand side, the \evi~from~\eqref{eq:evi} is applicable and yields
  \begin{align}
    \label{eq:limsup}
    \limsup_{\sigma\downarrow0}\frac{\nrg_\eps(u^*,v^*)-\nrg_\eps(U_\sigma,V_\sigma)}\sigma
    \le \frac{\ent(\hat u)-\ent(u^*)}\tau
    + \frac{\ent(\hat v)-\ent(v^*)}\tau
    = \frac{\entt(\hat u,\hat v)-\entt(u^*,v^*)}\tau.
  \end{align}
  For estimation on the left-hand side, we use the heat equation~\eqref{eq:heat}.
  By regularity and convexity of~$H_\eps$, it easily follows that $s\mapsto \nrg_\eps(U_s,V_s)$ is continuous at $\sigma=0^+$.
  Thanks to smoothness for $\sigma>0$, we can now write
  \begin{align*}
    \frac{\nrg_\eps(u^*,v^*)-\nrg_\eps(U_\sigma,V_\sigma)}\sigma = \frac1\sigma\int_0^\sigma\partial_s\nrg_\eps(U_s,V_s)\dd s
  \end{align*}
  by means of the fundamental theorem of calculus.
  Next, using also smoothness in~$x$ and the convexity estimate~\eqref{eq:Hqual}, we obtain
  \begin{align*}
    -\partial_s\nrg_\eps(U_s,V_s)
    &= -\intrd \big([\partial_uH_\eps(U_s,V_s)+\Phi]\Delta U_s + [\partial_vH_\eps(U_s,V_s) +\Psi]\Delta V_s\big)\dd x \\
    &= \intrd \big( \nabla\partial_uH_\eps(U_s,V_s)\cdot\nabla U_s + \nabla\partial_vH_\eps(U_s,V_s)\cdot\nabla V_s
      - \Delta\Phi\, U_s - \Delta\Psi\,V_s\big)\dd x \\
    &\ge \intrd \sum_{j=1}^d
      \begin{pmatrix}
        \partial_{x_j}U_s \\ \partial_{x_j}V_s 
      \end{pmatrix}
    \cdot \dff^2 H_\eps(U_s,V_s)\cdot
    \begin{pmatrix}
      \partial_{x_j}U_s \\ \partial_{x_j}V_s 
    \end{pmatrix}
    - dM\intrd (U_s + V_s)\dd x     \\
    &\ge \frac12\sum_{j=1}^d \intrd
      \begin{pmatrix}
        \partial_{x_j}U_s \\ \partial_{x_j}V_s 
      \end{pmatrix}
    \cdot
    \begin{pmatrix}
      F''(U_s) & 0 \\ 0& G''(V_s)
    \end{pmatrix}
                         \cdot
                         \begin{pmatrix}
                           \partial_{x_j}U_s \\ \partial_{x_j}V_s 
                         \end{pmatrix}
    -2dM \\
    &\ge \frac{1}{2B} \intrd \big[\big|\nabla [F'(U_s)]_{F'(1)}\big|^2+\big|\nabla[G'(V_s)]_{G'(1)}\big|^2\big]\dd x
      -2dM,
  \end{align*}
  where $B=\max_{0\le r\le1}F''(r)$,
  since by monotonicity of~$F'$, we have a.e.
  \begin{align*}
    \big|\nabla [F'(U_s)]_{F'(1)}\big|^2
    &=
    \begin{cases}
      F''(U_s)^2|\nabla U_s|^2 & \text{if $0\le U_s< 1$}, \\
      0 & \text{if $U_s\ge1$}
    \end{cases}
    \\
    &\le B F''(U_s)|\nabla U_s|^2.
  \end{align*}
  In combination with~\eqref{eq:limsup},
  we have
  \begin{align}
    \label{eq:soon}
    \limsup_{\sigma\downarrow0}\frac{1}{2B\sigma}\int_0^\sigma\intrd \big[\big|\nabla [F'(U_s)]_{F'(1)}\big|^2+\big|\nabla[G'(V_s)]_{G'(1)}\big|^2\big]\dd x \dd \sigma
    \le \frac{\entt(\hat u,\hat v)-\entt(u^*,v^*)}\tau + 2dM.
  \end{align}
  In addition, observe that, thanks to the at most linear growth of~$F'$ near zero,
  $[F'(U_s)]_{F'(1)}$ is uniformly controlled in $L^2(\R^d)$ by the $L^1$-norm of~$U_s$, which is one.
  With~\eqref{eq:soon} it now follows that $[F'(U_\sigma)]_{F'(1)}$ is uniformly bounded in $H^1(\R^d)$
  at least along some sequence with $\sigma\downarrow0$.
  By Rellich's lemma, one may assume strong convergence of that sequence in $L^2(\R^d)$,
  and thus identify the limit as $[F'(u^*)]_{F'(1)}$, since $U_s\to u^*$ strongly in $L^1(\R^d)$ for $s\downarrow0$.
  By Alaoglu's theorem, $F'(U_\sigma)$ converges weakly in $H^1(\R^d)$ to $F'(u^*)$,
  and by lower semi-continuity of norms, we finally obtain
  from~\eqref{eq:soon}
  \begin{equation*}
    \frac{1}{2B}\intrd \big[\big|\nabla [F'(u^*)]_{F'(1)}\big|^2+\big|\nabla[G'(v^*)]_{G'(1)}\big|^2\big]\dd x
    \le \frac{\entt(\hat u,\hat v)-\entt(u^*,v^*)}\tau + 2dM, 
  \end{equation*}
  which immediately yields the claim~\eqref{eq:lowregular}.
\end{proof}
\begin{proof}[Proof of Proposition~\ref{prp:regular} under the additional hypothesis~\eqref{eq:addhypo}]
  Combine~\eqref{eq:highregular} and~\eqref{eq:lowregular}, using that
  \[ |\nabla F'(u^*)|^2 \le |\nabla[F'(u^*)]_{F'(1)}|^2 + u^*|\nabla F'(u^*)|^2  \]
  because $\nabla F'(u^*)=\nabla[F'(u^*)]_{F'(1)}$ if $u^*<1$, and $|\nabla F'(u^*)|^2\le u^*|\nabla F'(u^*)|^2$ if $u^*\ge1$.
\end{proof}

\subsection{Definition of auxiliary functionals}
In this section and the next, we lay the the basis for the proof of Proposition~\ref{prp:core} in Section~\ref{sct:dissipation}.

We start with an alternative representation of the functionals~$\lyp_1$ and~$\lyp_2$ defined in~\eqref{eq:lypuv}.
Recalling~\eqref{eq:abbrev}, we have
\begin{equation}
  \label{eq:lypuvalt}
  \begin{split}
    \lyp_1(u) &= \intrd\big[F(u) + (\Phi+\eps\bar\Theta_u)u\big]\dd x - \intrd\big[F(\bar u_\eps) + (\Phi+\eps \bar\Theta_u)\bar u_\eps\big]\dd x, \\
    \lyp_2(v) &= \intrd\big[G(v) + (\Psi+\eps\bar\Theta_v)v\big]\dd x - \intrd\big[G(\bar v_\eps) + (\Psi+\eps\bar\Theta_v)\bar v_\eps\big]\dd x.
  \end{split}
\end{equation}
Notice that in both lines, the second integral is simply a normalization depending on~$\eps$ but not on~$u$ or~$v$.
The equivalence of the first formula above to the definition of~$\lyp_1$ in~\eqref{eq:lypuv} is obtained
by using --- in that order --- the fact that $\bar u_\eps=0$ on $\{\Phi\ge U_\eps\}$,
then the definition of~$d_F$,
next the first Euler--Lagrange equation from~\eqref{eq:EL}
in combination with the identity $\Phi-U_\eps = (\Phi-U_\eps)_+-(U_\eps-\Phi)_+$
--- which yields $F'(\bar u_\eps) +\eps\bar\Theta_u=(\Phi-U_\eps)_+-(\Phi-U_\eps)$ ---,
and finally equality of mass of~$u$ and~$\bar u_\eps$. In this way, we find
\begin{align*}
  \lyp_1(u)
  &= \intrd \big[ d_F(u|\bar u_\eps) + (u-\bar u_\eps)(\Phi-U_\eps)_+\big]\dd x \\
  &= \intrd \big[ F(u) - F(\bar u_\eps) + (u-\bar u_\eps)\big( (\Phi-U_\eps)_+-F'(\bar u_\eps)\big)\big] \dd x \\
  &= \intrd \big[ F(u) - F(\bar u_\eps) + (u-\bar u_\eps)\big( \Phi-U_\eps+\eps\bar\Theta_u\big)\big] \dd x \\
  &= \intrd \big[ F(u) - F(\bar u_\eps) + (u-\bar u_\eps)\big(\Phi+\eps\bar\Theta_u\big)\big]\dd x.
\end{align*}
The second formula in~\eqref{eq:lypuvalt} is justified analogously. Note that in view of~\eqref{eq:lypuvalt},
the relation between~$\nrg_\eps$ and~$\lyp$ is simply
\begin{align}
  \label{eq:12}
  \nrg_\eps(u,v) - \nrg_\eps(\bar u_\eps,\bar v_\eps)
  = \lyp(u,v) + \eps \intrd\big[h(u,v) - h(\bar u_\eps,\bar v_\eps)
  - \big(u-\bar u_\eps)\bar\Theta_u-(v-\bar v_\eps)\bar\Theta_v\big]\dd x.
\end{align}
For later reference, observe that thanks to
the estimates on~$\bar u_\eps$ and~$\bar v_\eps$ from Section~\ref{sct:stationary},
and the bounds on $|\nabla\bar\Theta_u|$ and $|\nabla\bar\Theta_v|$ from Corollary~\ref{cor:semiconvex} in particular,
there is a constant~$C$ independent of $\eps \in [0,\eps^*/2]$ such that
\begin{align}
  \label{eq:PhibyLyp}
  \intrd \Phi u \dd x \le \lyp_1(u) + C  , \quad \intrd \Psi v \dd x \le \lyp_2(v) + C,
\end{align}
and
\begin{equation}
  \label{eqn:comp_L_E}
  \lyp(u,v) \leq 2 \nrg_\eps(u,v) + C, \quad \nrg_\eps(u,v) \leq 2 \lyp(u,v) + C.
\end{equation}
Next, we introduce dissipation functionals that accompany $\lyp_1$ and $\lyp_2$:
\begin{align}
  \label{eq:dssuv}
  \dss_1(u) = \intrd u\big|\nabla\big[F'(u)+\Phi+\eps\bar\Theta_u\big]\big|^2\dd x,
  \quad
  \dss_2(v) = \intrd v\big|\nabla\big[G'(v)+\Psi+\eps\bar\Theta_v\big]\big|^2\dd x.
\end{align}
In the language of subdifferential calculus in the $L^2$-Wasserstein metric, see e.g.~\cite[Chapter 10]{AGS},
and in view of the representation~\eqref{eq:lypuvalt} above,
one can characterize the functionals above as $\dss_1=|\partial\lyp_1|^2$ and $\dss_2=|\partial\lyp_2|^2$.
Our method of proof does not require the full machinery of metric subdifferentials,
but only the following consequence.
\begin{lemma}
  \label{lem:lypconvex}
  There is a constant~$K_0$ such that for all $\eps\in[0,\eps^*]$ with $K_0\eps<\Lambda$,
  the functionals~$\lyp_1$ and~$\lyp_2$ are uniformly displacement convex of modulus $\Lambda-K_0\eps$.
  In particular, for all $u,v \in \prbtwo$ with $\dss_1(u)<\infty$ and $\dss_2(v)<\infty$, there hold
  \begin{equation}
    \label{eq:mildconvex}
    2(\Lambda-K_0\eps) \lyp_1(u) \le \dss_1(u), \quad
    2(\Lambda-K_0\eps) \lyp_2(v) \le \dss_2(v).
  \end{equation}
\end{lemma}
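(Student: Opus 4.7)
The plan is to recognize each of $\lyp_1$ and $\lyp_2$ (up to an additive constant independent of its argument) as a functional of the standard form~\eqref{eq:cvxxmp} in Lemma~\ref{lem:FuncIneq}, with the internal energy density given by $F$ respectively $G$, and with a modified external potential $V_1 \coloneqq \Phi + \eps \bar\Theta_u$ respectively $V_2 \coloneqq \Psi + \eps \bar\Theta_v$. Once this identification is set up, displacement convexity follows from McCann's condition~\eqref{eq:mccann} on $F$ and $G$ together with uniform convexity of the modified potentials, and the functional inequalities~\eqref{eq:mildconvex} come as a direct corollary.

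The first step is to read off from~\eqref{eq:lypuvalt} that
\begin{equation*}
\lyp_1(u) = \intrd \big[F(u) + V_1 u\big]\dd x - c_1(\eps), \quad \lyp_2(v) = \intrd \big[G(v) + V_2 v\big]\dd x - c_2(\eps),
\end{equation*}
where $c_1(\eps), c_2(\eps) \in \R$ depend only on $(\bar u_\eps, \bar v_\eps, \Phi, \Psi, \bar\Theta_u, \bar\Theta_v)$. The second step is to establish that $V_1$ and $V_2$ are $(\Lambda - K_0\eps)$-uniformly convex on $\R^d$: combining the lower bound $\nabla^2 \Phi \ge \Lambda \eins$ (and likewise for $\Psi$) from hypothesis~\eqref{hyp:Phi} with the semi-convexity bound $\nabla^2 \bar\Theta_u \ge -K_0 \eins$ (resp.\ $\nabla^2 \bar\Theta_v \ge -K_0 \eins$) supplied by Corollary~\ref{cor:semiconvex} yields $\nabla^2 V_1 \ge (\Lambda - K_0\eps)\eins$ and $\nabla^2 V_2 \ge (\Lambda - K_0 \eps)\eins$. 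By the assumption $K_0\eps < \Lambda$, these moduli of convexity are strictly positive.

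The third step is to identify the minimizers. The densities $\bar u_\eps$ and $\bar v_\eps$ are both non-negative probability densities of finite second moment and satisfy $\lyp_1(\bar u_\eps) = 0 = \lyp_2(\bar v_\eps)$ by construction of the Bregman divergence; since each $\lyp_i$ is a sum of non-negative terms, these are global minimizers on $\prbtwo$. Alternatively, the first Euler--Lagrange equation in~\eqref{eq:EL} can be rewritten as
\begin{equation*}
F'(\bar u_\eps) + V_1 = (U_\eps - \Phi)_+ + \Phi = \max(U_\eps, \Phi),
\end{equation*}
which is constant (equal to $U_\eps$) on the support $\spt{u} = \{\Phi < U_\eps\}$ and bounded below by $U_\eps$ off the support, thereby confirming that $\bar u_\eps$ satisfies the variational characterization of the unique minimizer of $u \mapsto \intrd[F(u) + V_1 u]\dd x$ provided by Lemma~\ref{lem:FuncIneq}; the analogous computation works for $\bar v_\eps$.

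The final step is to invoke Lemma~\ref{lem:FuncIneq} with $\lambda = \Lambda - K_0\eps$, $e = F$, $V = V_1$ (and $u_* = \bar u_\eps$), which yields
\begin{equation*}
2(\Lambda - K_0\eps)\big[\lyp_1(u) - \lyp_1(\bar u_\eps)\big] \le \intrd u \big|\nabla\big[F'(u) + V_1\big]\big|^2 \dd x = \dss_1(u),
\end{equation*}
and the same argument for the $v$-component. Since $\lyp_1(\bar u_\eps) = 0$, this is~\eqref{eq:mildconvex}. No obstacle is truly serious here since all hard work has already been done: the key non-trivial input is Corollary~\ref{cor:semiconvex}, which in turn depends on the $C^2$-regularity of $\bar\Theta_u, \bar\Theta_v$ across the boundaries of the supports established in Proposition~\ref{prp:regularity}, and these results rest on the $2$-degeneracy hypothesis. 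The only point that requires a small verification is that Lemma~\ref{lem:FuncIneq} is applicable despite $V_1$ not being smooth on all of $\R^d$ in the same way $\Phi$ is — but $V_1$ is still a bounded, $C^2$-regular, uniformly semi-convex function thanks to Corollary~\ref{cor:semiconvex}, which suffices for the hypotheses of that lemma.
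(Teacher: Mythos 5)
Your proof is correct and takes essentially the same route as the paper: it combines the $\eps$-uniform semi-convexity of $\bar\Theta_u$, $\bar\Theta_v$ from Corollary~\ref{cor:semiconvex} with the $\Lambda$-convexity of $\Phi$, $\Psi$ and McCann's condition on $F$, $G$, and then applies Lemma~\ref{lem:FuncIneq} to the representation~\eqref{eq:lypuvalt}. The only difference is that you spell out explicitly that $\bar u_\eps$, $\bar v_\eps$ are the minimizers with $\lyp_1(\bar u_\eps)=\lyp_2(\bar v_\eps)=0$, a detail the paper leaves implicit.
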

\begin{proof}
  By Corollary~\ref{cor:semiconvex} (see~\eqref{eq:semiconvex}),
  the function $\bar\Theta_u=\partial_uh(\bar u_\eps,\bar v_\eps)$ 
  is $\eps$-uniformly semi-convex with some modulus~$-K_0$.
  Recalling the $\Lambda$-uniform convexity of~$\Phi$,
  we conclude that the sum $\Phi+\eps \bar\Theta_u$ is uniformly convex of modulus $\Lambda-K_0\eps$ as long as $K_0\eps<\Lambda$.
  By assumption,~$F$ satisfies McCann's condition~\eqref{eq:mccann}.
  The result now follows from the general theory of displacement convexity, see Lemma~\ref{lem:FuncIneq}.   
  The argument for~$\lyp_2$ is completely analogous.
\end{proof}
Notice that by the second condition in~\eqref{eqn_eps_bar} for the choice of $\bar \eps >0$,
we are able to bound for every $\eps \in [0,\bar \eps]$ the dissipation functionals~$\dss_1$ and~$\dss_2$ from below
by some positive multiple of the relative entropy functionals~$\lyp_1$ and~$\lyp_2$, respectively.

\subsection{An estimate by Bregman distances}
The sole purpose of this section is to show Lemma~\ref{lem:bregman} below,
which becomes essential for the estimate of ``garbage terms'' in the proof of Proposition~\ref{prp:core}.
The (technical) proof of the lemma heavily uses $2$-degeneracy and $2$-boundedness of $(F,G,h)$.
\begin{lemma}
  \label{lem:bregman}
  There is a constant~$\kappa$ independent of $\eps\in[0,\eps^*]$ 
  such that the estimate
  \begin{align}
    \label{eq:omegaest}
    \intrd (u+v)\big[\omega\big(F'(u),G'(v)\big)-\omega\big(F'(\bar u_\eps),G'(\bar v_\eps)\big)\big]^2\dd x
    \le \kappa\lyp(u,v)
  \end{align}
  holds for all $(u,v)\in\prbtwotwo$ with $\lyp(u,v) < \infty$ and for $\omega\colon\Rnn^2\to\R$ being any of the following four functions:
  $\partial_\rho\theta_u$, $\partial_\eta\theta_u$, $\partial_\rho\theta_v$, or $\partial_\eta\theta_v$.
\end{lemma}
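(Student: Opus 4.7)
The target estimate compares a weighted $L^2$-norm of a non-linear $(u,v)$-difference against the Bregman-type functional $\lyp$. My plan is to first reduce the $\omega$-difference to an estimate of $F'$- and $G'$-differences (weighted by either $u$ or $v$), then absorb each of the resulting four cross and diagonal terms into the pieces of $\lyp_1(u)+\lyp_2(v)$. The technical heart will be the \emph{cross} terms, where the boundary degeneracy of $\omega$ is essential.

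\medskip

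\textbf{Steps 1 and 2: Lipschitz reduction and diagonal estimates.}
Since $(F,G,h)$ is $2$-bounded, the first partial derivatives of $\omega$ are globally bounded by some $A'$, so by the mean-value theorem,
\[
 \bigl[\omega(F'(u),G'(v)) - \omega(F'(\bar u_\eps), G'(\bar v_\eps))\bigr]^2 \le 2(A')^2 \bigl(|F'(u)-F'(\bar u_\eps)|^2 + |G'(v)-G'(\bar v_\eps)|^2\bigr).
\]
For the diagonal terms $\intrd u|F'(u)-F'(\bar u_\eps)|^2\,\dd x$ (and analogously for $v$ and $G'$), I split $\R^d$ into $\spt{u}=\{\Phi<U_\eps\}$ and its complement. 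On $\spt{u}$, the Bregman identity $d_F(u|\bar u_\eps)=\int_{\bar u_\eps}^u F''(s)(u-s)\,\dd s$ together with the uniform bound $\bar u_\eps\le\bar U$ from Proposition~\ref{prp:steady} and the growth hypotheses~\eqref{hyp:quadratic_growth},~\eqref{hyp:powerF} control $u|F'(u)-F'(\bar u_\eps)|^2$ by $C\,d_F(u|\bar u_\eps)$, after a case split on whether $u$ is small or large relative to $\bar u_\eps$. Off $\spt{u}$, one has $F'(\bar u_\eps)=0$, and the tripling inequality~\eqref{eq:tripling} reduces $uF'(u)^2$ to $C(1+F(u))F'(u)$; combined with the coercivity~\eqref{eq:Phiupbound} of $(\Phi-U_\eps)_+$ outside $\spt{u}$, this is absorbed into $\intrd\bigl[F(u)+u(\Phi-U_\eps)_+\bigr]\dd x$, which is of the form of $\lyp_1(u)$ up to additive constants harmless by non-negativity of $\lyp$.

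\medskip

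\textbf{Step 3: cross terms and main obstacle.}
The term $\intrd v|F'(u)-F'(\bar u_\eps)|^2\,\dd x$ cannot be handled by Step 1 alone, because there is no direct relation between a $v$-weight and $\lyp_1(u)$. This is the main obstacle, and the way around it exploits the structural bound from Remark~\ref{remark_consequence_hypotheses}(3), $|\omega(\rho,\eta)|\le A\min\{1,\rho,\eta\}$, which is precisely the $2$-degeneracy of $\omega$ inherited from~\eqref{hyp:hvanish}. Applied to each of the two terms composing $D\coloneqq |\omega(F'(u),G'(v))-\omega(F'(\bar u_\eps),G'(\bar v_\eps))|$, this yields $D \le A\bigl(G'(v)+G'(\bar v_\eps)\bigr)$. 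Multiplying with the Lipschitz bound from Step 1 gives the mixed estimate
\[
 D^2 \le A\,A'\,\bigl(G'(v)+G'(\bar v_\eps)\bigr)\bigl(|F'(u)-F'(\bar u_\eps)|+|G'(v)-G'(\bar v_\eps)|\bigr),
\]
which effectively replaces the weight $v$ in the cross term by $v\bigl(G'(v)+G'(\bar v_\eps)\bigr)$. The first summand is controlled by $G(v)+v^2G''(v)$ via McCann's condition~\eqref{eq:mccann} and convexity of $G$; the second is bounded by $G'(\bar U)\cdot v$ using the uniform bound on $\bar v_\eps$ from Proposition~\ref{prp:steady}, hence controlled by the unit mass of $v$. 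A final Cauchy--Schwarz splits the Lipschitz factor and reduces everything to the diagonal bounds established in Step 2. The symmetric term $\intrd u|G'(v)-G'(\bar v_\eps)|^2\,\dd x$ is handled by the $F$-analogue $D\le A(F'(u)+F'(\bar u_\eps))$.

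\medskip

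The difficulty is concentrated in Step 3: the naive Lipschitz estimate produces weights with no natural home in $\lyp$. It is the second, \emph{degeneracy} pointwise bound on $\omega$, combined multiplicatively with Lipschitz, that provides the extra factor $G'(v)$ (resp.\ $F'(u)$) needed to absorb the off-diagonal weight via McCann. I expect the main bookkeeping work to be in ensuring that the two pointwise bounds are recombined region-by-region so that no tail term survives, with all constants tracked uniformly in $\eps\in[0,\eps^*]$.
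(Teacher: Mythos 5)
Your overall strategy---prove a pointwise bound against $d_F(u|\bar u_\eps)+d_G(v|\bar v_\eps)$ and integrate, using $2$-boundedness and $2$-degeneracy of $(F,G,h)$---points in the right direction, but the opening move creates a genuine gap. By applying the mean-value/Lipschitz bound for $\omega$ \emph{globally} in Step 1, you discard the fact that $\omega$ itself is bounded, and the reduced diagonal quantity $\intrd u\,|F'(u)-F'(\bar u_\eps)|^2\dd x$ is simply not controlled by $\kappa\,\lyp(u,v)$: for $F(u)=u^2/2$ the integrand is of order $u^3$, while $d_F(u|\bar u_\eps)=\tfrac12(u-\bar u_\eps)^2$ and the potential part of $\lyp_1$ is linear in $u$; a plateau of height $h$ carrying mass $1/2$ placed at a fixed bounded location makes the ratio of the two integrals of order $h$, so no $\eps$-independent (indeed no finite) constant can work for the reduced quantity. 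The same large-density problem defeats your off-support absorption (the tripling inequality gives $uF'(u)^2\lesssim (1+F(u))F'(u)$, which is superlinear in $F(u)$ and not dominated by $F(u)+u(\Phi-U_\eps)_+$), and no ``case split on whether $u$ is small or large relative to $\bar u_\eps$'' can rescue the claimed pointwise bound $u|F'(u)-F'(\bar u_\eps)|^2\le C\,d_F(u|\bar u_\eps)$, which is false for large $u$ whenever $F$ has power-type growth. Your cross-term Step 3 then inherits this defect, since it is reduced by Cauchy--Schwarz to those diagonal bounds; moreover the appeal to McCann's condition to control $vG'(v)$ by $G(v)+v^2G''(v)$ does not help, because $v^2G''(v)$ has no counterpart in $\lyp$.

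The missing idea is a case distinction on the \emph{size of the densities} rather than on supports, using the global bound $|\omega|\le A$ precisely where the densities are large. The paper's proof establishes the pointwise inequality $u\,[\omega(F'(u),G'(v))-\omega(F'(\bar u_\eps),G'(\bar v_\eps))]^2\le\kappa\,(d_F(u|\bar u_\eps)+d_G(v|\bar v_\eps))$ (and symmetrically with weight $v$) in three regimes, with $\bar U$ the $\eps$-uniform bound on $\bar u_\eps,\bar v_\eps$: if $u>3\bar U$, bound the bracket by $4A^2$ and use convexity, $d_F(u|\bar u_\eps)\ge(u-2\bar U)\,[F'(2\bar U)-F'(\bar U)]$, which grows linearly in $u$ and absorbs the weight; if $u\le3\bar U<v$, the left-hand side is bounded by a constant, absorbed analogously by $d_G(v|\bar v_\eps)$; only when $u,v\le3\bar U$ does one use the Lipschitz bound for $\omega$ (its derivatives being bounded on the relevant compact set by $2$-boundedness) together with $[F'(u)-F'(\bar u_\eps)]^2\le L\,d_F(u|\bar u_\eps)$ and its $G$-analogue, which hold there because $F''$, $G''$ are bounded on $[0,3\bar U]$, while the weight $u\le3\bar U$ is harmless. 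In this bounded regime the ``cross-weight'' difficulty you worried about disappears (the weight is a constant), and neither McCann's condition nor the swap condition is needed for this lemma; your degeneracy bound $|\omega|\le A\min\{1,\rho,\eta\}$ is only needed in the crude form $|\omega|\le A$ here.
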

\begin{proof}
  We shall derive~\eqref{eq:omegaest} as a consequence of a pointwise estimate on the integrand,
  namely
  \begin{align}
    \label{eq:preomegaest}
    (u+v)\big[\omega\big(F'(u),G'(v)\big)-\omega\big(F'(\bar u_\eps),G'(\bar v_\eps)\big)\big]^2
    \le \kappa\big(d_F(u|\bar u_\eps) + d_G(v|\bar v_\eps)\big),
  \end{align}
  where~$d_F$ and~$d_G$ were introduced in~\eqref{eq:bregman}.
  In view of the definition of~$\lyp$ in~\eqref{eq:lypuv}--\eqref{eq:deflyp},
  an integration in~$x$ yields the claim~\eqref{eq:omegaest}.

  We now prove~\eqref{eq:preomegaest}, only with~$u$ instead of $(u+v)$ (as the case with~$v$ instead of $(u+v)$ is analogous). Let~$\bar U$ be such that $\bar u_\eps,\bar v_\eps\le\bar U$ for all $\eps\in[0,\eps^*]$. 
  We distinguish three cases.

  \emph{Case 1:} $u>3\bar U$.
  By $2$-boundedness and $2$-degeneracy of $(F,G,h)$, we have 
  \begin{align*}
    u\big[\omega\big(F'(u),G'(v)\big)-\omega\big(F'(\bar u_\eps),G'(\bar v_\eps)\big)\big]^2 \le 4A^2u,
  \end{align*}
  see Remark~\ref{remark_consequence_hypotheses}~(3). The multiple of~$u$ on the right-hand side can be estimated by a multiple of $d_F(u|\bar u)$ as follows: thanks to convexity of~$F$,
  \begin{align}
    d_F(u|\bar u_\eps)
    & \ge F(2\bar U) + (u-2\bar U)F'(2\bar U) - \big[F(\bar u_\eps)+(u-\bar u_\eps)F'(\bar u_\eps)\big] \nonumber \\
    & \ge (u-2\bar U)F'(2\bar U) + (2\bar U-\bar u_\eps)F'(\bar u_\eps) - (u-\bar u_\eps)F'(\bar u_\eps) \nonumber \\
    & = (u-2\bar U)\big[F'(2\bar U) - F'(\bar u_\eps)\big]
    \ge (u-2\bar U)\big[F'(2\bar U)-F'(\bar U)\big]. \label{eq:convexity_large_u}
  \end{align}
  By strict convexity, $F'(2\bar U) - F'(\bar U)>0$,
  and with $u>3\bar U$, we clearly have $4A^2u\le \kappa d_F(u|\bar u_\eps)$
  for an appropriate constant~$\kappa$ independent of~$u$ and~$\eps$.

  \emph{Case 2:} $0\le u\le3\bar U$ and $v>3\bar U$.
  Using again the global bound~$A$ on~$\omega$,
  we obtain
  \begin{align*}
    u\big[\omega\big(F'(u),G'(v)\big)-\omega\big(F'(\bar u_\eps),G'(\bar v_\eps)\big)\big]^2 \le 4 A^2\bar U,
  \end{align*}
  i.e., the left-hand side is bounded by an expression that is independent of~$u$,~$v$ and~$\eps$.
  Clearly, this expression is estimated by $\kappa d_G(v|\bar v_\eps)$
  with an appropriate~$\kappa$ independent of~$v$;
  this follows in analogy to the estimate in the first case above.

  \emph{Case 3:} $0\le u,v\le3\bar U$.
  As a first step, we show that, for an appropriate constant~$L$ independent of~$\eps$, we have
  \begin{align}
    \label{eq:bregmanbelow}
    [F'(u)-F'(\bar u_\eps)]^2\le L d_F(u|\bar u_\eps)
  \end{align}
  and correspondingly 
  \begin{equation*}
   [G'(v)-G'(\bar v_\eps)]^2\le L d_G(v|\bar v_\eps).
  \end{equation*}  
  Considering both sides of~\eqref{eq:bregmanbelow} as functions in the variable~$u$, we obviously have equality in the case $u = \bar u_\eps$. Using the definition of~$d_F$, we find 
  \begin{equation*}
   \frac{\dn}{\dd u} [F'(u)-F'(\bar u_\eps)]^2 = 2 [F'(u)-F'(\bar u_\eps)] F''(u) = 2 F''(u) \frac{\dn}{\dd u} d_F(u|\bar u_\eps)
  \end{equation*}
  for all $u > 0$. Notice that this expression, by convexity of~$F$, is negative for $u < \bar u_\eps$ and positive for $u > \bar u_\eps$. With the help of hypothesis~\eqref{hyp:powerF} and the convexity of~$F$, there exists a positive constant~$C$ such that
  \begin{equation*}
   F''(r) \le C \quad \text{for all } r \in [0,3\bar U].
  \end{equation*}
  Hence, with the choice $L \coloneqq  2 C$, we conclude that the left-hand side of~\eqref{eq:bregmanbelow} decreases faster on $(0,\bar u_\eps)$ and increases slower on $(\bar u_\eps,3 \bar U)$ than the right-hand side. This suffices to have the validity of inequality~\eqref{eq:bregmanbelow} for all $u \in [0,3\bar U]$.
  
  Next, define $\rho_s\coloneqq sF'(u)+(1-s)F'(\bar u_\eps)$ and $\eta_s\coloneqq sG'(v)+(1-s)G'(\bar v_\eps)$ for $s\in[0,1]$.
  Then
  \begin{align*}
    & \big[\omega\big(F'(u),G'(v)\big)-\omega\big(F'(\bar u_\eps),G'(\bar v_\eps)\big)\big]^2 \\
    &\le \int_0^1\big[
    \big(F'(u)-F'(\bar u_\eps)\big)\partial_\rho\omega(\rho_s,\eta_s)
    + \big(G'(v)-G'(\bar v_\eps)\big)\partial_\eta\omega(\rho_s,\eta_s)
    \big]^2\dd s \\
    &\le 2\bigg(\sup_{0\le\mu,\nu\le3 \bar U}\big|\dff\omega\big|\bigg)^2
    \big[\big(F'(u)-F'(\bar u_\eps)\big)^2+\big(G'(v)-G'(\bar v_\eps)\big)^2\big].
  \end{align*}
  The supremum above is a finite quantity~$B$,
  thanks to $2$-boundedness of $(F,G,h)$.
  Therefore, with~$L$ from~\eqref{eq:bregmanbelow}, we find
  \begin{align*}
    u\big[\omega\big(F'(u),G'(v)\big)-\omega\big(F'(\bar u_\eps),G'(\bar v_\eps)\big)\big]^2
    \le 6\bar U B L \big(d_F(u|\bar u_\eps)+d_G(v|\bar v_\eps)\big),
  \end{align*}
  proving the pointwise estimate~\eqref{eq:preomegaest} also in the final case.
\end{proof}

\subsection{Proof of the core inequality}
\label{sct:dissipation}
Finally, we prove Proposition~\ref{prp:core}.
Again, the additional hypothesis~\eqref{eq:addhypo} enters only indirectly via Lemma~\ref{lem:optpotentials}.
\begin{proof}[Proof of Proposition~\ref{prp:core} under the additional hypothesis~\eqref{eq:addhypo}]
  Let $P_F(r)=rF'(r)-F(r)$ for $r \geq 0$.
  Recall that $(\varphi_u,\psi_u)$ is the optimal pair of $c$-conjugate potentials for the transport from~$u^*$ to~$\hat u$.
  Since~$\lyp_1$ is displacement convex, the following ``above tangent formula'' holds,
  see e.g.~\cite[Proposition 5.29 \& Theorem 5.30]{Villani},
  \begin{align*}
    \lyp_1(\hat u) - \lyp_1(u^*)
    \ge \intrd P_F(u^*)\Delta^\text{ac}\varphi_u\dd x-\intrd u^*\nabla[\Phi+\eps\bar\Theta_u]\cdot\nabla\varphi_u \dd x, 
  \end{align*}
  where $\Delta^\text{ac}\varphi_u$ is the absolutely continuous part of the signed measure
  defined by the distributional Laplacian $\Delta\varphi_u$.
  Thanks to the regularity of~$u^*$, we may re-write the first integral on the right-hand side using integration by parts.
  Indeed, observe that
  \[ \nabla P_F(u^*)\cdot\nabla\varphi_u=u^*\nabla F'(u^*)\cdot\nabla\varphi_u\in L^1(\R^d) \]
  since $\nabla F'(u^*)\in L^2(\R^d;u^*\leb)$ by Lemma~\ref{lem:onestepuFprime}
  and $\nabla\varphi_u\in L^2(\R^d;u^*\leb)$ in view of~\eqref{eq:monge}.
  Now, since $P_F\ge0$ by convexity of $F$,
  and since $\Delta^{ac}\varphi_u\ge \Delta\varphi_u$ because~$\varphi_u$ is semi-concave,
  we have
  \begin{align}
    \label{eq:halfcore1}
    \lyp_1(\hat u) - \lyp_1(u^*)\ge - \intrd u^*\nabla \big[F'(u^*)+\Phi+\eps\bar\Theta_u\big]\cdot\nabla\varphi_u\dd x
    = \tau Z_1(u^*,v^*),
 \end{align}
  where~$Z_1$ can be made more explicit by substitution of the potential ~$\varphi_u$ from~\eqref{eq:optpotentials}:
  \begin{align*}
    Z_1(u,v) \coloneqq  \intrd u\,\nabla\big[F'(u)+\Phi+\eps\bar\Theta_u\big]\cdot\nabla\big[F'(u) + \Phi+\eps \partial_uh(u,v) \big]\dd x.   
  \end{align*}
  We estimate $Z_1(u,v)$ using a combination of the previously shown results.
  For brevity, we use --- only in the calculations below ---
  in addition to~$\bar\Theta_u$ and~$\bar\Theta_v$ introduced in~\eqref{eq:abbrev}
  the notations
  \begin{align*}
    \Theta_u &\coloneqq  \theta_u\big(F'(u),G'(v)\big) = \partial_uh(u,v), \\
    \Theta_{u,\rho} &\coloneqq  \partial_\rho\theta_u\big(F'(u),G'(v)\big) = \frac{\partial_{uu}h(u,v)}{F''(u)},
  \end{align*}
  and so on. First, the Cauchy--Schwarz inequality yields 
  \begin{align*}
    Z_1(u,v) &=
        \intrd u\left|\nabla\big[F'(u)+\Phi+\eps\bar\Theta_u\big]\right|^2\dd x
        + \eps\intrd u \,\nabla\big[F'(u)+\Phi+\eps\bar\Theta_u\big]\cdot\nabla\big[\Theta_u-\bar\Theta_u\big]\dd x \\
      &\ge \left(1-\frac{\eps}2\right)\dss_1(u) - \frac{\eps}2 \intrd u\left|\nabla\big[\Theta_u-\bar\Theta_u\big]\right|^2\dd x.
  \end{align*}
  Inside the last integral, we have
  \begin{align*}
    \nabla\big[\Theta_u-\bar\Theta_u\big]
    &= \Theta_{u,\rho}\nabla F'(u) + \Theta_{u,\eta}\nabla G'(v)
      - \bar\Theta_{u,\rho}\nabla F'(\bar u_\eps) - \bar\Theta_{u,\eta}\nabla G'(\bar v_\eps) \\
    &= \Theta_{u,\rho}\nabla\big[F'(u)+\Phi+\eps\bar\Theta_u\big]  + \Theta_{u,\eta}\nabla\big[G'(v)+\Psi+\eps\bar\Theta_v\big]\\
    & \quad - \Theta_{u,\rho}\nabla\big[F'(\bar u_\eps)+\Phi+\eps\bar\Theta_u\big] - \Theta_{u,\eta}\nabla\big[G'(\bar v_\eps)+\Psi+\eps\bar\Theta_v\big] \\
    & \quad  +\big(\Theta_{u,\rho}-\bar\Theta_{u,\rho}\big)\nabla F'(\bar u_\eps) 
      +\big(\Theta_{u,\eta}-\bar\Theta_{u,\eta}\big)\nabla G'(\bar v_\eps).
  \end{align*}
  The third and the fourth term above can be simplified using 
  that by combination of the Euler--Lagrange system~\eqref{eq:EL}
  with the identity $\Phi-U_\eps=(\Phi-U_\eps)_+-(U_\eps-\Phi)_+$,
  one has
  \begin{align*}
    \nabla\big[F'(\bar u_\eps)+\Phi+\eps\bar\Theta_u\big] = -\nabla(\Phi-U_\eps)_+, \quad
    \nabla\big[G'(\bar v_\eps)+\Psi+\eps\bar\Theta_v\big] = -\nabla(\Psi-V_\eps)_+.    
  \end{align*}
  This yields
  \begin{align}
    \nonumber
    &\intrd u\big|\nabla\big[\Theta_u-\bar\Theta_u\big]\big|^2 \dd x \\
    \label{eq:1oo4}
    & \le 6\intrd u\Theta_{u,\rho}^2
      \big|\nabla\big[F'(u)+\Phi+\eps\bar \Theta_u\big]\big|^2\dd x
      + 6\intrd u\Theta_{u,\eta}^2
      \big|\nabla\big[G'(v)+\Psi+\eps\bar \Theta_v\big]\big|^2\dd x
    \\
    \label{eq:2oo4}
    & \quad
      + 6\intrd u\Theta_{u,\rho}^2\big|\nabla(\Phi-U_\eps)_+\big|^2\dd x
      + 6\intrd u\Theta_{u,\eta}^2\big|\nabla(\Psi-V_\eps)_+\big|^2\dd x  
    \\
    \label{eq:3oo4}
    &\quad
      +6 \intrd u \big(\Theta_{u,\rho}-\bar\Theta_{u,\rho}\big)^2 |\nabla F'(\bar u_\eps)|^2\dd x
      +6 \intrd u \big(\Theta_{u,\eta}-\bar\Theta_{u,\eta}\big)^2 |\nabla G'(\bar v_\eps)|^2\dd x .    
  \end{align}
  For further estimation, we observe that $2$-boundedness and $2$-degeneracy of $(F,G,h)$ imply
  \begin{align}
    \label{eq:Ahelp}
    |\Theta_{u,\rho}|\le A\min\big\{1,F'(u),G'(v)\big\},
    \quad
    |\Theta_{u,\eta}|\le A\min\big\{1,F'(u),G'(v)\big\},    
  \end{align}
  see~\eqref{eqn_def_A}. The first integral in~\eqref{eq:1oo4} is now easily estimated using that thanks to~\eqref{eq:Ahelp},
  \begin{align*}
    \intrd u\Theta_{u,\rho}^2
    \big|\nabla\big[F'(u)+\Phi+\eps\bar\Theta_u\big]\big|^2\dd x
    \le A^2\dss_1(u).
  \end{align*}
  For estimation of the second integral in~\eqref{eq:1oo4},
  we use instead that $(F,G,h)$ satisfies the swap condition~\eqref{hyp:swap}:
  thus $u\Theta_{u,\eta}^2\le W^2v$
  and consequently
  \begin{align*}
    \intrd u\Theta_{v,\eta}^2
    \big|\nabla\big[G'(v)+\Psi+\eps\bar\Theta_v\big]\big|^2\dd x
    \le W^2\dss_2(v).
  \end{align*}
  For estimation of the first integral in~\eqref{eq:2oo4}, two ingredients are needed.
  First, recall that $|\nabla\Phi|^2\le \frac{2M^2}{\Lambda}\Phi$ by~\eqref{eq:Phigradbound},
  and conclude that on $\{\Phi>U_\eps\}$:
  \begin{align*}
    \big|\nabla(\Phi-U_\eps)_+\big|^2
    = |\nabla\Phi|^2
    \le \frac{2M^2}{\Lambda}\Phi
    =    \frac{2M^2}{\Lambda}U_\eps +  \frac{2M^2}{\Lambda}(\Phi-U_\eps)_+\,.
  \end{align*}
  Second, we claim that there is a constant~$B$ such that
  \begin{align*}
    u\Theta_{u,\rho}^2 \le B F(u).    
  \end{align*}
  For $u\ge1$ this is a trivial consequence of the convexity of~$F$.
  For $0<u<1$, we use that in view of hypotheses~\eqref{eq:degeneracy} and~\eqref{hyp:powerF},
  there are constants~$c_0$ and $C_0$ such that $ F'(u) \le C_0 u^{m-1}$ and $F(u) \ge c_0 u^m$ are satisfied. Therefore, employing also~\eqref{eq:Ahelp}, we have
  \begin{align*}
    u\Theta_{u,\rho}^2
    \le u A^2F'(u)
    \le \frac{C_0A^2}{c_0}F(u).
  \end{align*}
  Now we combine these ingredients, recalling again~\eqref{eq:Ahelp} and
  bearing in mind that the integral is actually an integral on $\{\Phi>U_\eps\}$ only,
  where $d_F(u|\bar u_\eps)=F(u)$ thanks to~\eqref{eq:degeneracy}:
  \begin{align*}
    \intrd u\Theta_{u,\rho}^2\big|\nabla(\Phi-U_\eps)_+\big|^2 \dd x
    &\le \frac{2M^2 B U_\eps}{\Lambda}\intrd d_F(u|\bar u_\eps)\dd x + \frac{2M^2A^2}{\Lambda}\intrd u(\Phi-U_\eps)_+\dd x \\
    &\le \frac{2M^2}{\Lambda}\max\big\{BU_\eps,A^2\big\}\lyp_1(u).
  \end{align*}
  The second integral in~\eqref{eq:2oo4} is estimated in a completely analogous manner.

  Finally, the integrals in~\eqref{eq:3oo4} are both estimated by means of Lemma~\ref{lem:bregman}.
  We combine this with the boundedness of $|\nabla F'(\bar u_\eps)|$ and $|\nabla G'(\bar v_\eps)|$, respectively:
  by Proposition~\ref{prp:regularity}, we have that $F'(\bar u_\eps),G'(\bar u_\eps)\in W^{1,\infty}(\R^d)$,
  and that
  \begin{align*}
    |\nabla F'(\bar u_\eps)| \le \tilde{B}, \quad |\nabla G'(\bar v_\eps)|\le \tilde{B}
  \end{align*}
  a.e.~on~$\R^d$, with~$\tilde{B}$ independent of~$\eps$.
  We thus obtain 
  \begin{align*}
    \intrd u\big(\Theta_{u,\rho}-\bar\Theta_{u,\rho}\big)^2\big|\nabla F'(\bar u_\eps)\big|^2\dd x
    + \intrd u\big(\Theta_{u,\eta}-\bar\Theta_{u,\eta}\big)^2\big|\nabla G'(\bar v_\eps)\big|^2\dd x
    \le 2\tilde{B}^2\kappa\lyp(u,v).
  \end{align*}
  To summarize so far, we have shown that, with a suitable constant~$C$,
  \begin{align*}
    Z_1(u,v) \ge \left(1-\frac{\eps}2\big[1+A^2\big]\right)\dss_1(u) - \frac{\eps}2 W^2\dss_2(v) - \frac{\eps}2 C\lyp(u,v).
  \end{align*}
  This finishes our estimate on~$Z_1$.
  The pendant of~\eqref{eq:halfcore1} for~$v$ in place of~$u$ is
  \begin{equation*}
    \lyp_2(\hat v) - \lyp_2(v^*) \ge Z_2(u^*,v^*)
  \end{equation*}
  with
  \begin{equation*}
    Z_2(u,v) \coloneqq  \intrd v\,\nabla\big[G'(u)+\Psi+\eps\bar\Theta_v\big]\cdot\nabla\big[G'(v) + \Psi+\eps \partial_vh(u,v) \big]\dd x.
  \end{equation*}
  Estimating~$Z_2$ in analogy to~$Z_1$ as above leads to
  \begin{align*}
    Z_1(u,v) + Z_2(u,v)
    \ge \left(1-\frac{\eps}2\big[1+A^2+W^2\big]\right)(\dss_1(u)+\dss_2(v))
    - \eps C\lyp(u,v) .
  \end{align*}
  With an application of~\eqref{eq:mildconvex} and an appropriate choice of $K>0$,
  the claim~\eqref{eq:coreest} has been shown.
\end{proof}

\subsection{Removal of the additional hypothesis on the datum}
\label{sct:removehypo}
Let $(\hat u,\hat v)\in\prbtwotwo$ with $\nrg_\eps(\hat u,\hat v)<\infty$ with $\eps \in [0,\bar{\eps}]$ be given,
that does not necessarily satisfy the additional hypothesis~\eqref{eq:addhypo} for any $R>0$,
and let $(u^*,v^*)\in\prbtwotwo$ be the unique minimizer of the Yosida-regularized energy, according to Lemma~\ref{lem:jko}.

Consider sequences of radii $R\to\infty$ and pairs $(\hat u_R,\hat v_R)\in\prbtwotwo$ that satisfy~\eqref{eq:addhypo} for the corresponding $R$.
Moreover, we assume that $(\hat u_R,\hat v_R)$ approximates $(\hat u,\hat v)$ in the following sense:
$\hat u_R$, $F(\hat u_R)$ and $|x|^2\hat u_R$ converge to $\hat u$, $F(\hat u)$ and $|x|^2\hat u$ in $L^1(\R^d)$, respectively,
and likewise for $\hat v_R$.
An immediate consequence is:
\begin{align}
  \label{eq:energylim}
  \nrg_\eps(\hat u_R,\hat v_R)\to\nrg_\eps(\hat u,\hat v),\quad
  \lyp(\hat u_R,\hat v_R) \to \lyp(\hat u,\hat v), \quad
  \entt(\hat u_R,\hat v_R) \to \entt(\hat u,\hat v).
\end{align}
Next, consider the associated sequence of functionals $\auxe_R$ on $\prbtwotwo$ given by
\begin{align*}
  \auxe_R(u,v) = \nrg_{\eps,\tau}\big((u,v)\big|(\hat u_R,\hat v_R)\big).
\end{align*}
By Lemma~\ref{lem:jko}, there is a unique minimizer $(u^*_R,v^*_R)$ for each $\auxe_R$.
As an intermediate step, we show convergence of these minimizers to $(u^*,v^*)$.
\begin{lemma}
  \label{lem:Gamma}
  As $R\to\infty$, the functionals $\auxe_R$ $\Gamma$-converge
  to $\nrg_{\eps,\tau}\big(\cdot\big|(\hat u,\hat v)\big)$ in the narrow topology.

  Moreover, let $\overline{\auxe} \coloneqq \sup_R\nrg_\eps(\hat u_R,\hat v_R)$, which is finite by~\eqref{eq:energylim};
  the sublevel sets $\auxe_R\le\overline{\auxe}$ are non-empty,
  and consist of pairs of densities with $R$-uniformly bounded second moment and $L^2$-norm.
\end{lemma}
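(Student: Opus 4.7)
The plan is to verify the two standard defining inequalities of $\Gamma$-convergence (liminf and limsup) and then establish the coercivity statement as a short separate observation. The key simplification is that $\auxe_R$ differs from $\nrg_{\eps,\tau}(\cdot|(\hat u,\hat v))$ only in the Wasserstein term, so essentially all the work is in showing that passing $R\to\infty$ in the reference point $(\hat u_R,\hat v_R)$ produces the expected continuous perturbation. This is driven by the convergence of $\hat u_R\to\hat u$ and $\hat v_R\to\hat v$ in $\wass$, which is a consequence of the assumed $L^1$-convergence together with convergence of the second moments.

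For the liminf inequality, I would take an arbitrary narrowly convergent sequence $(u_R,v_R)\to(u,v)$ with $\liminf_R\auxe_R(u_R,v_R)<\infty$. Along a suitable subsequence, the coercivity estimate recalled in the proof of Proposition~\ref{prp:steady}, namely $\nrg_\eps(u,v)\ge c\intrd(u^2+v^2)\dd x+\tfrac{\Lambda}{2}\intrd|x|^2(u+v)\dd x-C$, yields uniform bounds on both second moments and $L^2$-norms of $(u_R,v_R)$. This gives weak $L^2$-convergence of a further subsequence, and then lower semicontinuity of $\nrg_\eps$ follows from convexity and non-negativity of $H_\eps$ together with the observation that, for test weights of at most quadratic growth such as $\Phi$ and $\Psi$, narrow convergence with bounded second moments ensures $\intrd u_R\Phi\dd x\to\intrd u\Phi\dd x$ (uniform integrability). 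For the Wasserstein piece, joint continuity of $\wass$ under narrow convergence with uniformly bounded second moments, combined with $\wass(\hat u_R,\hat u)\to 0$ and $\wass(\hat v_R,\hat v)\to 0$, gives $\wass(u_R,\hat u_R)^2+\wass(v_R,\hat v_R)^2\to\wass(u,\hat u)^2+\wass(v,\hat v)^2$.

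For the limsup (recovery) inequality, I would simply take the constant sequence $(u_R,v_R):=(u,v)$. The internal/potential energy contribution is identical, and $\wass(u,\hat u_R)^2\to\wass(u,\hat u)^2$, $\wass(v,\hat v_R)^2\to\wass(v,\hat v)^2$ by continuity of $\wass$ in its arguments, which completes the $\Gamma$-convergence part.

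Finally, for the coercivity statement: the pair $(\hat u_R,\hat v_R)$ itself lies in $\{\auxe_R\le\overline{\auxe}\}$, since $\auxe_R(\hat u_R,\hat v_R)=\nrg_\eps(\hat u_R,\hat v_R)\le\overline{\auxe}$ by the definition of $\overline{\auxe}$, so the sublevel sets are non-empty. For any other $(u,v)$ in the sublevel set, non-negativity of the Wasserstein term yields $\nrg_\eps(u,v)\le\overline{\auxe}$, and inserting this into the same coercivity estimate from Proposition~\ref{prp:steady} quoted above produces the $R$-uniform bounds on second moments and $L^2$-norms. The main obstacle, if any, lies in making the lower semicontinuity of $\nrg_\eps$ rigorous at the level of the potential terms; but once the $L^2$-coercivity is brought in, uniform integrability handles the quadratic weights and the remaining arguments are entirely standard.
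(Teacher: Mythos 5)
Your overall structure matches the paper's: the constant recovery sequence, the liminf inequality via semicontinuity, and the sublevel-set coercivity from non-negativity of the Wasserstein term. Your treatment of the sublevel sets is correct and in fact slightly simpler than the paper's: you obtain the second-moment bound directly from $\nrg_\eps(u,v)\le\overline{\auxe}$ via the lower bound $\Phi\ge\frac\Lambda2|x-\underline x_\Phi|^2$, whereas the paper derives it from $\wass(u,\hat u_R)^2\le2\tau\overline{\auxe}$ together with the uniformly bounded second moments of $\hat u_R$; both routes work, and your recovery-sequence step is identical to the paper's and correct, since $\hat u_R\to\hat u$ and $\hat v_R\to\hat v$ in $\wass$ indeed follow from the assumed $L^1$- and second-moment convergence.

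The liminf argument as written, however, rests on two auxiliary claims that are false under your hypotheses. Narrow convergence together with merely \emph{bounded} second moments does not imply $\intrd \Phi u_R\dd x\to\intrd\Phi u\dd x$ for the quadratically growing potential $\Phi$, nor does it imply $\wass(u_R,\hat u_R)^2\to\wass(u,\hat u)^2$: placing a mass fraction $1/R$ at distance of order $R^{1/2}$ keeps the second moments bounded and leaves the narrow limit unchanged, yet shifts both the potential integral and the squared Wasserstein distance by a fixed positive amount. Bounded second moments give uniform integrability only against weights of strictly sub-quadratic growth; for quadratic weights one needs convergence of the second moments (i.e.\ $\wass$-convergence of $u_R$), which an arbitrary narrowly convergent sequence does not provide. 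The gap is easily repaired because the liminf inequality only requires \emph{lower} semicontinuity, and escaping mass can only increase the limit: $\intrd\Phi u\dd x$ and $\intrd\Psi v\dd x$ are narrowly lsc since $\Phi,\Psi\ge0$ are continuous, the internal energy is narrowly lsc by convexity and non-negativity of $H_\eps$, and $\wass$ is jointly narrowly lsc, so that $\liminf_R\wass(u_R,\hat u_R)\ge\wass(u,\hat u)$ follows from the triangle inequality combined with $\wass(\hat u_R,\hat u)\to0$. Replacing your continuity claims by these semicontinuity statements — which is precisely what the paper does — makes the argument complete; the subsequence extraction and weak $L^2$-compactness you invoke are then unnecessary for this step.
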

\begin{proof}
  We verify the definition of $\Gamma$-convergence:
  first, the ``liminf-property'' is a direct consequence of
  the lower semicontinuity of $\nrg_\eps$ and that of $\wass$ (with respect to both components)
  under narrow convergence.  
  Second, a recovery sequence $(\tilde u_R,\tilde v_R)$ for a given pair $(u,v)$ is the constant one, $(\tilde u_R,\tilde v_R):=(u,v)$.
  Indeed, for that choice,
  \begin{align*}
    \auxe_R(\tilde u_R,\tilde v_R)-\nrg_{\eps,\tau}\big((u,v)\big|(\hat u,\hat v)\big)
    = \frac1{2\tau}\big[\big(\wass(u,\hat u_R)^2-\wass(u,\hat u)^2\big)+\big(\wass(v,\hat v_R)^2-\wass(v,\hat v)^2\big)\big],
  \end{align*}
  which tends to zero for $R \to \infty$
  since $\wass(u,\hat u_R)\to\wass(u,\hat u)$ and $\wass(v,\hat v_R)\to\wass(v,\hat v)$ follows
  by convergence of $\hat u_R$ and $\hat v_R$ in $L^1(\R^d)$, and convergence of their second moments, to the respective limits.  

  Concerning the sublevel set $\auxe_R\le\overline{\auxe}$:
  By definition of $\auxe_R$, it contains $(\hat u_R,\hat v_R)$ and is thus non-empty.
  Moreover, any $(u,v)$ in that sublevel satisfies
  \begin{align*}
    \nrg_\eps(u,v)\le\overline{\auxe}, \quad
    \wass(u,\hat u_R)^2\le 2\tau\overline{\auxe},\quad
    \wass(v,\hat v_R)^2\le 2\tau\overline{\auxe}.
  \end{align*}
  The first inequality provides an $R$-uniform bound on $F(u)$ and $G(v)$ in $L^1(\R^d)$,
  which in view of the at least quadratic growth of $F$ and $G$ implies a bound of $u$ and $v$ in $L^2(\R^d)$.
  The second estimate provides an $R$-uniform bound on the second moment of~$u$ and~$v$:
  let $T$ be an optimal map for the transport of $u$ to $\hat u_R$, then
  \begin{equation*}
    \frac12\intrd |x|^2u(x)\dd x
    \le \intrd |T(x)|^2u(x)\dd x + \intrd|T(x)-x|^2u(x)\dd x
    = \intrd |y|^2\hat u_R(y)\dd y + \wass(u,\hat u_R)^2;
  \end{equation*}
  recall that the second moments of the $\hat u_R$ are $R$-uniformly bounded by construction.
  The argument for $v$ is analogous.
\end{proof}
As a consequence of Lemma~\ref{lem:Gamma}, and by uniqueness of the minimizer $(u^*,v^*)$ for the limiting functional,
we have that $u^*_R\to u^*$ and $v^*_R\to v^*$ narrowly.
\begin{proof}[Proof of Proposition~\ref{prp:core}]
  Since the pair $(\hat u_R,\hat v_R)$ satisfies the additional hypothesis~\eqref{eq:addhypo},
  inequality~\eqref{eq:coreest} is valid for $(u^*_R,v^*_R)$ and $(\hat u_R,\hat v_R)$ in place of $(u^*,v^*)$ and $(\hat u,\hat v)$, i.e.,
  \begin{align*}
    \lyp(\hat u_R,\hat v_R) \ge \big[1+2\tau(\Lambda-K\eps)\big]\lyp(u^*_R,v^*_R).
  \end{align*}
  By~\eqref{eq:energylim}, the left-hand side converges to $\lyp(\hat u,\hat v)$, while we use lower semi-continuity of~$\lyp$ with respect to narrow convergence on the right-hand side.
  This yields~\eqref{eq:coreest}, as desired.
\end{proof}
\begin{lemma}
  \label{lem:filippo}
  $F'(u^*_R)$ and $G'(v^*_R)$ converge to their respective limits $F'(u^*)$ and $G'(v^*)$, weakly in $H^1(\R^d)$, and strongly in $L^2(\R^d)$.
\end{lemma}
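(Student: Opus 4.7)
The strategy is to apply Proposition~\ref{prp:regular} to each regularised pair $(u^*_R,v^*_R)$ to obtain a uniform $H^1(\R^d)$-bound on $F'(u^*_R)$ and $G'(v^*_R)$, identify the weak limits using the narrow convergence $(u^*_R,v^*_R)\to(u^*,v^*)$ from Lemma~\ref{lem:Gamma}, and finally promote local to global $L^2$-convergence by tightness. Since $(\hat u_R,\hat v_R)$ satisfies~\eqref{eq:addhypo}, Proposition~\ref{prp:regular} gives
\[
  \intrd\big(|\nabla F'(u^*_R)|^2+|\nabla G'(v^*_R)|^2\big)\dd x\le C_R,
\]
and I would check that $C_R$ stays bounded as $R\to\infty$ term by term: $\nrg_\eps(\hat u_R,\hat v_R)$ and $\entt(\hat u_R,\hat v_R)$ converge by~\eqref{eq:energylim}; the energy drop is controlled by $\nrg_\eps(\hat u_R,\hat v_R)$ because $\nrg_\eps\ge 0$; and $\entt(u^*_R,v^*_R)$ is bounded below uniformly via the standard estimate $\ent(u)\ge -C(1+\intrd|x|^2u\dd x)$ combined with the uniform second-moment bound provided by Lemma~\ref{lem:Gamma}.

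To upgrade the $\dot H^1$-bound to a full $H^1$-bound, I would establish uniform $L^1(\R^d)$-boundedness of $F'(u^*_R)$: on $\{u^*_R<1\}$ convexity together with $F'(0)=0$ yields $F'(u^*_R)\le F'(1)\,u^*_R$, which is $L^1$ by unit mass; on $\{u^*_R\ge 1\}$ the tripling inequality~\eqref{eq:tripling} gives $F'(u^*_R)\le D(1+2F(u^*_R))$, integrable by the uniform energy bound. A Gagliardo--Nirenberg interpolation of $L^1$ with $\dot H^1$ then produces the required uniform $L^2$-control, and analogously for $G'(v^*_R)$. Banach--Alaoglu yields weak subsequential limits $g_u,g_v\in H^1(\R^d)$; Rellich provides strong $L^2_\loc$- and, along a further subsequence, a.e.\ convergence. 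Since $F'$ is a homeomorphism of $\Rnn$ by strict convexity with $F'(0)=0$ (cf.\ Remark~\ref{remark_H_eps}), $u^*_R\to(F')^{-1}(g_u)$ a.e., and comparison with the narrow limit $u^*$ forces $g_u=F'(u^*)$; similarly $g_v=G'(v^*)$. Uniqueness of the limit upgrades the convergence from subsequential to full.

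The main obstacle is the final step, namely strengthening local to global strong $L^2(\R^d)$-convergence, which requires tightness of the family $\{F'(u^*_R)^2\}$. I would control the tail $\int_{|x|>R_0}F'(u^*_R)^2\dd x$ by splitting according to $u^*_R\lessgtr r_0$ for a threshold $r_0\le 1$. On $\{u^*_R<r_0\}$, the near-zero asymptotics $F'(r)\le Cr^{m-1}$ from~\eqref{hyp:powerF} together with $m\ge 2$ yield $F'(u^*_R)^2\le Cu^*_R$, whose tail integral is $\le C R_0^{-2}$ by the uniform second-moment bound and Markov's inequality. On the complementary set $\{u^*_R\ge r_0\}\cap\{|x|>R_0\}$, whose Lebesgue measure is $\lesssim R_0^{-2}r_0^{-1}$ by two applications of Markov, the higher integrability of $F'(u^*_R)$ coming from the Sobolev embedding $H^1(\R^d)\hookrightarrow L^{2^*}(\R^d)$ (with the usual adaptations in low dimension) together with H\"older's inequality makes the remaining contribution arbitrarily small as $R_0\to\infty$ uniformly in $R$. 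Combined with strong $L^2_\loc$-convergence from the previous step, this yields the claimed strong $L^2(\R^d)$-convergence.
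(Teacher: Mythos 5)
Your proposal follows essentially the same route as the paper: a uniform $L^1$-bound on $F'(u^*_R)$, an $R$-uniform gradient bound from~\eqref{eq:allregular} using~\eqref{eq:energylim} together with the entropy lower bound of Lemma~\ref{lem:gero} and the uniform second moments from Lemma~\ref{lem:Gamma}, interpolation to a uniform $H^1$-bound, and then Alaoglu, Rellich and identification of the limit through the narrow convergence $u^*_R\to u^*$. Your explicit tightness argument (splitting at a threshold $u^*_R\lessgtr r_0$, using $F'(r)\le Cr^{m-1}$ near zero, Markov for the tails, and Sobolev/H\"older on the exceptional set) is a welcome addition: the paper only invokes Alaoglu and Rellich at this point and leaves the upgrade from $L^2_{\loc}$ to global $L^2$ implicit, so here you are more complete than the written proof.

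One justification is incorrect as stated: on $\{u^*_R<1\}$ you claim that ``convexity together with $F'(0)=0$ yields $F'(u)\le F'(1)\,u$''. Convexity of $F$ only gives monotonicity of $F'$, and a nondecreasing $F'$ with $F'(0)=0$ need not lie below the secant through the origin (that would require convexity of $F'$ itself). The bound you actually need, $F'(u)\le Cu$ for $u\le1$, does hold, but for a different reason: by~\eqref{hyp:powerF} with $m\ge2$, $F''$ is bounded on $(0,1]$, so $F'(u)=\int_0^u F''(t)\dd t\le Cu$; equivalently one can use the paper's combined estimate $F'(s)\le C(s+F(s))$, obtained from~\eqref{eq:degeneracy} and~\eqref{eq:tripling}. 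Since you invoke exactly the near-zero asymptotics $F'(r)\le Cr^{m-1}$ later in the tail estimate, this is a local slip in the justification rather than a gap in the argument; with that correction the proof is sound.
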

\begin{proof}
  We show that $F'(u^*_R)$ and $G'(v^*_R)$ are $R$-uniformly bounded in $H^1(\R^d)$.
  The argument for $F'(u^*_R)$ is the following:
  first, observe that $F'(s)\le C(s+F(s))$ with some constant~$C$,
  which is true for small and for large values of $s\ge0$, respectively, because of~\eqref{eq:degeneracy} and~\eqref{eq:tripling}.
  This implies an $R$-uniform $L^1$-bound on $F'(u^*_R)$ since
  \begin{align*}
    \intrd F'(u^*_R) \dd x \le C\left(\intrd u^*_R\dd x + \intrd F(u^*_R)\dd x\right) \le C\big(1+2\nrg_\eps(u^*_R,v^*_R)\big).
  \end{align*}
  Next, estimate~\eqref{eq:allregular} holds with $(u^*_R,v^*_R)$ and $(\hat u_R,\hat v_R)$ in place of $(u^*,v^*)$ and $(\hat u,\hat v)$, respectively,
  since $(\hat u_R,\hat v_R)$ satisfies hypothesis~\eqref{eq:addhypo}.
  By~\eqref{eq:energylim}, the terms $\nrg_\eps(\hat u_R,\hat v_R)$ and $\entt(\hat u_R,\hat v_R)$ are $R$-uniformly bounded from above, and by Lemma~\ref{lem:gero}, $\entt(u^*_R,v^*_R)$ is $R$-uniformly bounded from below.
  Together, this implies an $R$-uniform bound on $\nabla F'(u^*_R)$ in $L^2(\R^d)$.
  By interpolation with the bound in $L^1(\R^d)$ above, the bound in $H^1(\R^d)$ follows.

  The claim now follows by Alaoglu's theorem, and by Rellich's theorem,
  bearing in mind that the sequences $F'(u^*_R)$ and $G'(v^*_R)$ have narrow limits $F'(u^*)$ and $G'(v^*)$, respectively. 
\end{proof}
\begin{proof}[Proof of Proposition~\ref{prp:regular}]
  From estimate~\eqref{eq:allregular}, applied to $(\hat u_R,\hat v_R)$, we have that
  \begin{multline*}
    \frac C\tau\big[\nrg_\eps(u^*_R,v^*_R)+\entt(u^*_R,v^*_R)\big] + \intrd \big[|\nabla F'(u^*_R)|^2 + |\nabla G'(v^*_R)|^2 \big]\dd x
    \\
    \le \frac C\tau\big[\tau+(1+\tau)\nrg_\eps(\hat u_R,\hat v_R) + \entt(\hat u_R,\hat v_R)\big].
  \end{multline*}
  By means of~\eqref{eq:energylim}, we can pass to the limit on the right-hand side.
  By means of lower semi-continuity of $\nrg_\eps$ and $\entt$ with respect to narrow convergence,
  and of the $L^2$-norm with respect to weak convergence in $L^2(\R^d)$,
  we can pass via the previous Lemma~\ref{lem:filippo} to the limit also on the left-hand side.
  This gives~\eqref{eq:allregular} with datum $(\hat u,\hat v)$.
\end{proof}
\begin{proof}[Proof of Proposition~\ref{prp:dweak}]
  Fix $\zeta\in C^\infty_c(\R^d)$.
  Since~\eqref{eq:dweak} holds under the hypothesis~\eqref{eq:addhypo}, we have
  \begin{align*}
    \intrd \frac{u^*_R-\hat u_R}{\tau}\zeta\dd x = \intrd u_R^*\nabla\big[F'(u_R^*)+\eps\partial_uh(u_R^*,v_R^*)+\Phi\big]\cdot\nabla\zeta\dd x + R_u.
  \end{align*}
  We can easily pass to the limit $R\to\infty$ on the left-hand side by narrow and $L^1$-convergence of~$u_R^*$ and~$\hat{u}_R$, respectively.
  For the integral on the right-hand side, observe that $u_R^*\nabla\zeta\to u^*\nabla\zeta$ in $L^2(\R^d)$,
  because of Lemma~\ref{lem:filippo}, and since $F'$ has at least linear growth.
  Moreover, $\nabla F'(u_R^*)\rightharpoonup \nabla F'(u^*)$ follows by Lemma~\ref{lem:filippo} as well.
  To conclude that also $\nabla\partial_uh(u_R^*,v_R^*)\rightharpoonup\nabla\partial_uh(u^*,v^*)$ in $L^2(\R^d)$,
  observe that
  \begin{align*}
    \nabla\partial_uh(u_R^*,v_R^*)
    = \partial_\rho\theta_u\big(F'(u_R^*),G'(v_R^*)\big)\,\nabla F'(u_R^*)
    + \partial_\eta\theta_u\big(F'(u_R^*),G'(v_R^*)\big)\,\nabla G'(v_R^*).
  \end{align*}
  For both products on the right-hand side, weak convergence in $L^2(\R^d)$ is easily concluded
  from the weak convergence of $\nabla F'(u_R^*)$ and of $\nabla G'(v_R^*)$,
  and from the convergence in measure of
  the bounded functions $\partial_\rho\theta_u\big(F'(u_R^*),G'(v_R^*)\big)$ and $\partial_\eta\theta_u\big(F'(u_R^*),G'(v_R^*)\big)$,
  again thanks to Lemma~\ref{lem:filippo}, and to the 2-boundedness of $(F,G,h)$. 

  In a completely analogous way, we can pass to the limit $R\to\infty$ in the $v$-equation in~\eqref{eq:addhypo}.
  The bound on $|R_u|+|R_v|$ is preserved thanks to~\eqref{eq:energylim} and lower semicontinuity of~$\nrg_\eps$ and $\entt$ with respect to narrow convergence. 
\end{proof}


\section{Existence of weak solutions}
\label{sct:existence}

The Yosida-regularized energy functional~$\nrg_{\eps,\tau}$ is now used to obtain
a time-discrete approximation $(u_\tau^n,v_\tau^n)_{n\in\N_0}$ of the solution to~\eqref{eq:eq}
for given initial data $u(0)=u_0$, $v(0)=v_0$ with finite energy $\nrg_\eps(u_0,v_0)<\infty$
by means of the \emph{minimizing movement scheme}.
Inductively, define $(u_\tau^0,v_\tau^0)\coloneqq (u_0,v_0)$,
and for each $n \in \N$ let $(u_\tau^n,v_\tau^n)$ be the minimizer
--- which exists and is unique by Lemma~\ref{lem:jko} ---
of the functional
\begin{align*}
  \prbtwotwo\ni(u,v)\mapsto \nrg_{\eps,\tau}\big((u,v)\big|(u_\tau^{n-1},v_\tau^{n-1})\big).
\end{align*}
Further, define the piecewise constant ``interpolations'' $\tilde u_\tau,\tilde v_\tau \colon [0,\infty)\to\prbtwo$ (depending of course on~$\eps$)
in the usual way:
\begin{align*}
  \tilde u_\tau(t) = u_\tau^n, \ \tilde v_\tau(t) = v_\tau^n\quad \text{for $(n-1)\tau<t\le n\tau$}.
\end{align*}
The result of this section is the following convergence.

\begin{proposition}
  \label{prp:existence}
  For every $\eps \in [0,\eps^*]$, the interpolations~$\tilde u_\tau$,~$\tilde v_\tau$ converge, for a suitable sequence $\tau\downarrow0$, to H\"older-continuous limit curves $u_*,v_* \colon [0,\infty)\to\prbtwo$,
  weakly in $L^1(\R^d)$ at every $t\ge0$:
  Moreover, $F'(\tilde u_\tau),G'(\tilde v_\tau)$ converge to the respective limits $F'(u_*),G'(v_*)$,
  weakly in $L^2(0,T;H^1(\R^d))$ and strongly in $L^2((0,T)\times\R^d)$, for any $T>0$.
  Furthermore, the limits are weak solutions to~\eqref{eq:eq} in the following sense:
  \begin{equation}
    \label{eq:weaksol}
    \begin{split}
      0 &= \int_0^\infty \intrd\Big(u_*\partial_t\xi - u_*\nabla\big[F'(u_*)+\Phi+\eps \partial_uh(u_*,v_*)\big]\cdot\nabla\xi \Big)\dd x\dd t , \\
      0 &= \int_0^\infty \intrd\Big(v_*\partial_t\xi - v_*\nabla\big[G'(v_*)+\Psi+\eps\partial_vh(u_*,v_*)\big]\cdot\nabla\xi \Big)\dd x\dd t ,
    \end{split}
  \end{equation}
  holds for arbitrary test functions $\xi\in C^\infty_c((0,\infty)\times\R^d)$.
\end{proposition}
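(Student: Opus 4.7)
The plan is to follow the classical JKO pattern: extract a priori estimates from Lemma~\ref{lem:jko}, upgrade them via Proposition~\ref{prp:regular} to spatial compactness, and use the discrete weak formulation of Proposition~\ref{prp:dweak} to identify the limit as $\tau\downarrow 0$.

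First, I iterate the one-step estimate~\eqref{eq:onestepmono}: telescoping gives $\nrg_\eps(u_\tau^n,v_\tau^n)\le \nrg_\eps(u_0,v_0)$ for all $n\ge 0$ as well as the total-dissipation bound $\sum_{n\ge 1}\dwass((u_\tau^{n-1},v_\tau^{n-1}),(u_\tau^n,v_\tau^n))^2 \le 2\tau\nrg_\eps(u_0,v_0)$. The energy bound together with the lower bound of $\Phi,\Psi$ by parabolas in~\eqref{eq:Phiupbound} controls the second moments of $\tilde u_\tau(t),\tilde v_\tau(t)$ uniformly in $t$ and $\tau$; a Cauchy--Schwarz estimate on the increments then yields the $\tfrac12$-H\"older bound $\dwass((\tilde u_\tau(s),\tilde v_\tau(s)),(\tilde u_\tau(t),\tilde v_\tau(t)))^2 \le 2\nrg_\eps(u_0,v_0)(|s-t|+\tau)$. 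A refined Ascoli--Arzel\`a argument in $(\prbtwotwo,\dwass)$ extracts a subsequence converging locally uniformly in time to H\"older-continuous limits $(u_*,v_*)$. The at-least-quadratic growth~\eqref{hyp:quadratic_growth} combined with the uniform $\nrg_\eps$-bound upgrades narrow convergence at each $t$ to weak convergence in $L^1(\R^d)$.

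Next, I sum inequality~\eqref{eq:allregular} over $n$. The telescoping of the energy-decay contributions is bounded by $\nrg_\eps(u_0,v_0)$; the telescoping of the entropy-decay contributions is bounded thanks to the lower bound on $\entt$ from Lemma~\ref{lem:gero} of the appendix, which uses the uniform second-moment control from the previous step. An $L^1$-bound from~\eqref{eq:tripling} controls the zeroth-order part, producing $\tau$-uniform bounds on $F'(\tilde u_\tau)$ and $G'(\tilde v_\tau)$ in $L^2(0,T;H^1(\R^d))$ for every $T>0$. Alaoglu's theorem yields weak convergence along a further subsequence, whose limits are identified as $F'(u_*),G'(v_*)$ via the narrow convergence from step one and continuity of $F',G'$. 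Strong convergence in $L^2((0,T)\times\R^d)$ then follows by an Aubin--Lions-type argument, combining the spatial $H^1$-bound with the time equicontinuity inherited from the $\wass$-H\"older continuity (equivalently, from the bound on the discrete time derivative implicit in Proposition~\ref{prp:dweak}).

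Finally, I pass to the limit in~\eqref{eq:dweak}. Testing with $\xi(n\tau,\cdot)$ for $\xi\in C^\infty_c((0,\infty)\times\R^d)$, multiplying by $\tau$, and summing, a summation by parts in $n$ converts the discrete-derivative term into $\int\!\!\int \tilde u_\tau\,\partial_t^\tau\xi\dd x\dd t\to -\int\!\!\int u_*\partial_t\xi\dd x\dd t$, via the weak $L^1_x$-convergence at each~$t$. For the spatial integrand, the decomposition
\[\nabla\partial_uh(\tilde u_\tau,\tilde v_\tau) = \partial_\rho\theta_u\bigl(F'(\tilde u_\tau),G'(\tilde v_\tau)\bigr)\,\nabla F'(\tilde u_\tau) + \partial_\eta\theta_u\bigl(F'(\tilde u_\tau),G'(\tilde v_\tau)\bigr)\,\nabla G'(\tilde v_\tau)\]
combined with $2$-boundedness of $(F,G,h)$ and the strong $L^2$-convergence of $F'(\tilde u_\tau),G'(\tilde v_\tau)$ (pointwise a.e. along a subsequence, hence strictly-monotone pullback yields a.e.\ convergence of $\tilde u_\tau,\tilde v_\tau$) gives weak $L^2$-convergence of the products to $\nabla\partial_uh(u_*,v_*)$. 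The factor $\tilde u_\tau\nabla\xi$ converges strongly in $L^2_{\loc}((0,\infty)\times\R^d)$ by Vitali's theorem, applied to the bounded sequence $\tilde u_\tau$ in $L^\infty(0,T;L^2(\R^d))$ (from the quadratic growth of $F$) together with the pointwise a.e.\ convergence. The accumulated remainder obeys $\tau\sum_n|R_u^n|\le \tau\|\xi\|_{C^2}(\nrg_\eps(u_0,v_0)-\nrg_\eps(u_\tau^N,v_\tau^N))=O(\tau)\to 0$, and analogously for $R_v$, yielding~\eqref{eq:weaksol}. The principal obstacle is precisely the strong $L^2$-compactness of $F'(\tilde u_\tau),G'(\tilde v_\tau)$: the spatial side is handled by Proposition~\ref{prp:regular}, but the compatible time regularity must be carefully extracted from the $\wass$-H\"older estimate, and the swap condition~\eqref{hyp:swap} is essential for controlling the cross-term $\partial_\eta\theta_u\,\nabla G'(\tilde v_\tau)$ that appears both in the regularity estimate and in the passage to the limit.
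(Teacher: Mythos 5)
Your strategy coincides with the paper's: telescoped JKO estimates, the $\wass$-H\"older bound and a refined Arzel\`a--Ascoli argument, summation of~\eqref{eq:allregular} with the entropy lower bound of Lemma~\ref{lem:gero}, an Aubin--Lions-type compactness step, and passage to the limit in~\eqref{eq:dweak} using the decomposition of $\nabla\partial_uh$ into bounded coefficients times $\nabla F'(\tilde u_\tau)$, $\nabla G'(\tilde v_\tau)$. However, two steps as written contain genuine gaps. First, you identify the weak $L^2(0,T;H^1(\R^d))$ limit of $F'(\tilde u_\tau)$ as $F'(u_*)$ ``via the narrow convergence from step one and continuity of $F',G'$''. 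This is not valid: $F'$ is nonlinear, so narrow (or weak $L^1$) convergence of $\tilde u_\tau$ does not pass through $F'$. Identification requires pointwise a.e.\ (equivalently strong) convergence of $\tilde u_\tau$, which at that point you have not yet established -- it is precisely the output of the compactness argument you only invoke afterwards. The order must be: strong/in-measure convergence of $\tilde u_\tau$ first, then a.e.\ convergence along a subsequence, then continuity of $F'$ plus uniform integrability to identify and upgrade the limit of $F'(\tilde u_\tau)$.

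Second, the ``Aubin--Lions-type argument'' cannot be the classical one, because your time equicontinuity is for $\tilde u_\tau$ in $\wass$ while your spatial bound is on $F'(\tilde u_\tau)$ in $H^1(\R^d)$; the paper resolves this mismatch with the Rossi--Savar\'e generalization, taking the coercive integrand $\mathcal{F}(u)=\|F'(u)\|_{H^1}^2+\intrd|x|^2u\dd x$ (whose sublevels are shown to be compact in $L^2(\R^d)$, using the moment term for tightness) and the compatible map $g=\wass$. Even then one only obtains convergence of $\tilde u_\tau(t)$ in $L^2(\R^d)$ in measure with respect to $t$; to reach strong convergence in $L^2((0,T)\times\R^d)$ for $\tilde u_\tau$, and then via Vitali for $F'(\tilde u_\tau)$, one needs uniform integrability in space-time to an exponent strictly above two. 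The paper supplies this through the Gagliardo--Nirenberg interpolation estimate~\eqref{eq:L2H1L1}, combining the $L^\infty(0,T;L^1(\R^d))$ bound on $F'(\tilde u_\tau)$ (from~\eqref{eq:tripling} and~\eqref{eq:Emono}) with the $L^2(0,T;H^1(\R^d))$ bound. Your proposal omits this step entirely, so your applications of Vitali -- both to $F'(\tilde u_\tau)$ and to $\tilde u_\tau\nabla\xi$, for which an $L^\infty(0,T;L^2(\R^d))$ bound plus a.e.\ convergence does not by itself give uniform square-integrability -- are not yet justified. With the generalized Aubin--Lions lemma and the interpolation estimate inserted, your plan becomes the paper's proof.
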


With the solution $u_*,v_* \colon [0,\infty)\to\prbtwo$ from Proposition~\ref{prp:existence} we have shown the existence of a transient solution to the initial value problem for~\eqref{eq:eq}, as stated in Theorem~\ref{thm:transient}. 

\subsection{Multi-step estimates}
We next prove several \emph{$\tau$-independent} estimates for $(\tilde{u}_\tau,\tilde{v}_\tau)$, which in the subsequent section then allows us to establish convergence for a sequence $\tau \downarrow 0$. We start by recalling the classical estimate that follows directly from the variational construction.
\begin{lemma}
  \label{lem:classical}
  For each $n \in \N$, we have
  \begin{equation}
  \label{eq:classical_1}
   \nrg_\eps(u_\tau^n,v_\tau^n) + \frac1{2\tau} \big(\wass(u_\tau^n,u_\tau^{n-1})^2+\wass(v_\tau^n,v_\tau^{n-1})^2\big) \leq \nrg_\eps(u_\tau^{n-1},v_\tau^{n-1}) 
  \end{equation}
  and for each $N \in \N$
  \begin{align}
    \label{eq:classical}
    \nrg_\eps(u_\tau^N,v_\tau^N)
    + \frac1{2\tau}\sum_{n=1}^N\big(\wass(u_\tau^n,u_\tau^{n-1})^2+\wass(v_\tau^n,v_\tau^{n-1})^2\big)
    \le \nrg_\eps(u_0,v_0).
  \end{align}
\end{lemma}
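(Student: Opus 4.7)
My plan is to derive both inequalities in Lemma~\ref{lem:classical} as a direct consequence of Lemma~\ref{lem:jko}, without requiring any new ingredients beyond the variational characterization of the minimizing movements scheme.

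First, I would establish the one-step monotonicity \eqref{eq:classical_1} by a single application of~\eqref{eq:onestepmono} from Lemma~\ref{lem:jko}, choosing the datum as $(\hat u,\hat v) \coloneqq (u_\tau^{n-1},v_\tau^{n-1})$ and identifying the unique minimizer with $(u^*,v^*) \coloneqq (u_\tau^n,v_\tau^n)$. Noting the decomposition
\begin{equation*}
  \dwass\big((u_\tau^n,v_\tau^n),(u_\tau^{n-1},v_\tau^{n-1})\big)^2 = \wass(u_\tau^n,u_\tau^{n-1})^2 + \wass(v_\tau^n,v_\tau^{n-1})^2
\end{equation*}
and cancelling the factors $\tau$ in the obvious way yields exactly \eqref{eq:classical_1}. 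To make the inductive application of Lemma~\ref{lem:jko} legitimate at each step, I would observe that the iterated datum $(u_\tau^{n-1},v_\tau^{n-1})$ has finite energy $\nrg_\eps(u_\tau^{n-1},v_\tau^{n-1}) < \infty$ by the previous instance of~\eqref{eq:classical_1}, starting from the assumption $\nrg_\eps(u_0,v_0) < \infty$.

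Second, for the summed estimate \eqref{eq:classical}, I would sum \eqref{eq:classical_1} over $n = 1, \ldots, N$. The energy contributions telescope: the sum on the right cancels with most of the sum on the left, leaving precisely $\nrg_\eps(u_\tau^N, v_\tau^N)$ and $\nrg_\eps(u_0, v_0) = \nrg_\eps(u_\tau^0, v_\tau^0)$. The dissipation terms, all with a positive sign, accumulate on the left-hand side, producing exactly \eqref{eq:classical}.

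There is no real obstacle here; the whole argument is bookkeeping once \eqref{eq:onestepmono} is available. The statement is reproduced essentially to fix the notation for the piecewise constant interpolants and to display the standard a priori bounds that will drive the compactness analysis in the subsequent subsection.
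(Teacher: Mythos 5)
Your proposal is correct and coincides with the paper's own argument: the one-step inequality is exactly \eqref{eq:onestepmono} of Lemma~\ref{lem:jko} applied with datum $(u_\tau^{n-1},v_\tau^{n-1})$, and the summed bound follows by telescoping over $n=1,\dots,N$. The additional remark that finite energy propagates along the iteration (justifying each application of Lemma~\ref{lem:jko}) is a harmless and sensible bit of bookkeeping.
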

\begin{proof}
  The first inequality~\eqref{eq:classical_1} rephrases \eqref{eq:onestepmono}.
  Summing these inequalities for $n=1,2,\ldots,N$, we then end up with the second inequality~\eqref{eq:classical}.
\end{proof}
The following three conclusions of Lemma~\ref{lem:classical} are important in the following:
\begin{itemize}
\item The values of $\nrg_\eps(u_\tau^n,v_\tau^n)$ are monotonically decreasing in~$n$,
  and in particular bounded by $\nrg_\eps(u_0,v_0)$.
  For $\eps \in [0,\eps^*]$, the hypothesis~\eqref{hyp:hbound} then implies a uniform bound on $F(u_\tau^n)$ and $G(v_\tau^n)$ in $L^1(\R^d)$,
  \begin{align}
    \label{eq:Emono}
    \intrd \big[F(u_\tau^n) + G(v_\tau^n)\big]\dd x \le 2 \nrg_\eps(u_0,v_0).
  \end{align}
\item Another consequence of energy monotonicity: for $\eps \in [0,\eps^*]$ we obtain,
  thanks to non-negativity of~$H_\eps$,
  and to the lower bounds on~$\Phi$ and~$\Psi$ by quadratic functions, see~\eqref{eq:Phiupbound}, a uniform bound on the second moments of~$u_\tau^n$ and~$v_\tau^n$,
  \begin{align}
    \label{eq:mom}
    \intrd |x|^2\big(u_\tau^n+v_\tau^n\big)\dd x \le 2 |\underline x_\Phi|^2+  2 |\underline x_\Psi|^2+\frac4\Lambda\nrg_\eps(u_0,v_0).
  \end{align}
\item By non-negativity of~$\nrg_\eps$,
  one can pass for $\eps \in [0,\eps^*]$ to the limit $N\to\infty$ in~\eqref{eq:classical} to obtain
  \begin{align}
    \label{eq:classical2}
    \frac1{\tau}\sum_{n=1}^\infty\big(\wass(u_\tau^n,u_\tau^{n-1})^2+\wass(v_\tau^n,v_\tau^{n-1})^2\big)
    \le 2\nrg_\eps(u_0,v_0).
  \end{align}
  This gives rise to the following uniform estimate on the modulus of quasi-continuity.
\end{itemize}
\begin{lemma}
  \label{lem:holder}
  There is a $\tau$-independent constant~$C$ such that for any $s,t\ge0$,
  \begin{align}
    \label{eq:holder}
    \wass\big(\tilde u_\tau(t),\tilde u_\tau(s)\big) \le C\sqrt{|t-s|+\tau},
    \quad
    \wass\big(\tilde v_\tau(t),\tilde v_\tau(s)\big) \le C\sqrt{|t-s|+\tau}.
  \end{align}
\end{lemma}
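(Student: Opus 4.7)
The proof will proceed by a standard telescoping argument combined with Cauchy--Schwarz, exploiting the energy dissipation estimate~\eqref{eq:classical2}. The plan is as follows.

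First, by symmetry I may assume $s < t$. I will pick integer indices $N_s, N_t \in \N_0$ so that $\tilde u_\tau(s) = u_\tau^{N_s}$ and $\tilde u_\tau(t) = u_\tau^{N_t}$; explicitly, $N_s = \lceil s/\tau\rceil$ and $N_t = \lceil t/\tau\rceil$, with the convention $N_0 = 0$. The relevant counting estimate is that the number of steps satisfies $N_t - N_s \le (t-s)/\tau + 1$, so $(N_t - N_s)\tau \le (t-s) + \tau$. If $N_s = N_t$ the left-hand side of~\eqref{eq:holder} vanishes and the bound is trivial, so I assume $N_s < N_t$ from here on.

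Second, I will telescope along the discrete trajectory using the triangle inequality for $\wass$ and then apply Cauchy--Schwarz to the resulting sum of $N_t - N_s$ terms:
\[
\wass\big(\tilde u_\tau(t),\tilde u_\tau(s)\big)^2
\le \Big(\sum_{n=N_s+1}^{N_t}\wass(u_\tau^n,u_\tau^{n-1})\Big)^2
\le (N_t-N_s)\sum_{n=N_s+1}^{N_t}\wass(u_\tau^n,u_\tau^{n-1})^2.
\]
Bounding the inner sum using~\eqref{eq:classical2} (which majorizes even the infinite sum by $2\tau\,\nrg_\eps(u_0,v_0)$), and combining with the counting estimate from the previous step, yields
\[
\wass\big(\tilde u_\tau(t),\tilde u_\tau(s)\big)^2
\le 2\tau(N_t-N_s)\,\nrg_\eps(u_0,v_0)
\le 2\nrg_\eps(u_0,v_0)\big(|t-s|+\tau\big),
\]
which is~\eqref{eq:holder} with the $\tau$-independent constant $C = \sqrt{2\nrg_\eps(u_0,v_0)}$. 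The argument for~$\tilde v_\tau$ is word-for-word identical, relying on the other half of~\eqref{eq:classical2}.

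There is no substantial obstacle in this argument: everything reduces to summing the one-step energy--distance inequality \eqref{eq:onestepmono}, which has already been carried out in Lemma~\ref{lem:classical} and packaged in~\eqref{eq:classical2}. The only care needed is the rounding of the continuous times $s,t$ to the nearest discrete index, which produces the extra $\tau$ inside the square root on the right-hand side of~\eqref{eq:holder} and is precisely what prevents this from being a true Hölder estimate at scale~$\tau$.
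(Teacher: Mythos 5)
Your proposal is correct and follows essentially the same route as the paper: telescoping via the triangle inequality for $\wass$, applying Cauchy--Schwarz (the paper phrases it as H\"older's inequality for sums), bounding the sum of squared one-step distances by the total dissipation estimate~\eqref{eq:classical2}, and using the counting bound $(N_t-N_s)\tau\le|t-s|+\tau$, arriving at the same constant $C=\sqrt{2\nrg_\eps(u_0,v_0)}$.
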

\begin{proof}
  Assume $0\le s<t$, and let $\underline n,\overline n\in\N_0$ be
  such that $(\underline n-1)\tau<s\le\underline n\tau$ and $(\overline n-1)\tau<t\le\overline n\tau$,
  i.e., $\tilde u_\tau(t) = u_\tau^{\overline n}$ and $\tilde u_\tau(s)=u_\tau^{\underline n}$,
  with $(\overline n-\underline n)\tau\le (t-s)+\tau$.
  If $\overline n=\underline n$, then~\eqref{eq:holder} trivially holds.
  Otherwise, it follows from~\eqref{eq:classical2}
  via the triangle inequality for~$\wass$ and H\"older's inequality for sums that
  \begin{align*}
    \wass\big(\tilde u_\tau(t),\tilde u_\tau(s)\big)
    \le \sum_{n=\underline n+1}^{\overline n}\wass(u_\tau^n,u_\tau^{n-1})
    &\le \Bigg(\frac1{\tau}\sum_{n=1}^\infty\big(\wass(u_\tau^n,u_\tau^{n-1})^2\Bigg)^{1/2}
    \Bigg(\sum_{n=\underline n+1}^{\overline n}\tau\Bigg)^{1/2} \\
    &\le \sqrt{2\nrg_\eps(u_0,v_0)}\sqrt{(t-s)+\tau}.
  \end{align*}
  This proves the first inequality in~\eqref{eq:holder},
  the second follows in the analogous way.
\end{proof}
\begin{lemma}
  There is a $\tau$-independent constant~$C$ such that,
  for each $T>0$,
  \begin{align}
    \label{eq:bpriori}
    \int_0^T \intrd \big(|\nabla F'(\tilde u_\tau)|^2 + |\nabla G'(\tilde v_\tau)|^2\big)\dd x
    \le C(1+T+\nrg_\eps(u_0,v_0)).
  \end{align}
\end{lemma}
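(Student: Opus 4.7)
The plan is to apply the single-step regularity bound from Proposition~\ref{prp:regular} at each time step and to sum over the steps inside $[0,T]$, exploiting the telescoping structure of the energy and entropy differences.

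First I would fix $T>0$ and $N\in\N$ with $(N-1)\tau < T \le N\tau$, and apply Proposition~\ref{prp:regular} to $(u^*,v^*) = (u_\tau^n,v_\tau^n)$ with datum $(\hat u,\hat v) = (u_\tau^{n-1},v_\tau^{n-1})$ for every $n=1,\dots,N$. Multiplying the resulting inequality by $\tau$ and summing yields
\begin{align*}
  \sum_{n=1}^N \tau\intrd\big(|\nabla F'(u_\tau^n)|^2+|\nabla G'(v_\tau^n)|^2\big)\dd x
  &\le C\sum_{n=1}^N\Big[\tau+\tau\nrg_\eps(u_\tau^{n-1},v_\tau^{n-1}) \\
  &\quad + \nrg_\eps(u_\tau^{n-1},v_\tau^{n-1})-\nrg_\eps(u_\tau^n,v_\tau^n) \\
  &\quad + \entt(u_\tau^{n-1},v_\tau^{n-1})-\entt(u_\tau^n,v_\tau^n)\Big].
\end{align*}
By definition of the piecewise constant interpolation the left-hand side equals the integral in~\eqref{eq:bpriori} up to a contribution from the interval $((N-1)\tau,T]$ (absorbed into the right-hand side using $N\tau\le T+\tau$).

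Next I would control the four kinds of terms on the right-hand side separately. The first summand contributes $CN\tau\le C(T+\tau)$. For the second, the monotonicity~\eqref{eq:classical_1} of the discrete energy gives $\nrg_\eps(u_\tau^{n-1},v_\tau^{n-1}) \le \nrg_\eps(u_0,v_0)$, so the total contribution is bounded by $CT\nrg_\eps(u_0,v_0)$. The energy differences in the third summand telescope into $\nrg_\eps(u_0,v_0)-\nrg_\eps(u_\tau^N,v_\tau^N) \le \nrg_\eps(u_0,v_0)$ because $\nrg_\eps\ge 0$ on account of $\eps\le\eps^*$ and~\eqref{hyp:hbound}. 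The entropy differences in the fourth summand telescope into $\entt(u_0,v_0) - \entt(u_\tau^N,v_\tau^N)$, which is the only delicate contribution.

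The hard part will be to show that this last difference is bounded by an $\eps$- and $\tau$-independent constant that depends only on $\nrg_\eps(u_0,v_0)$. Lemma~\ref{lem:gero} from the appendix provides a lower bound of the form $\entt(u_\tau^N,v_\tau^N)\ge -C_1(1+M_2)$, where $M_2$ denotes the uniform second-moment bound~\eqref{eq:mom}. For the upper bound on $\entt(u_0,v_0)$ I would split $\int u_0\log u_0\dd x$ into the regions $\{u_0\ge 1\}$ and $\{u_0<1\}$; on the first, $u_0\log u_0 \le F(u_0)$ (using $F''\ge c>0$ at infinity from~\eqref{hyp:quadratic_growth} together with $F(0)=F'(0)=0$, which makes $F(r)\gtrsim r^2$ for large $r$), while on the second $-u_0\log u_0$ is controlled by the second moment via the standard argument $-u_0\log u_0 \le u_0(1+|x|^2) + Ce^{-|x|^2/2}$, both being integrable from~\eqref{eq:mom} and~\eqref{eq:Emono} at $n=0$. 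Together this gives $\entt(u_0,v_0)\le C_2(1+\nrg_\eps(u_0,v_0))$.

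Combining all four contributions, I obtain
\begin{equation*}
  \int_0^T\intrd\big(|\nabla F'(\tilde u_\tau)|^2+|\nabla G'(\tilde v_\tau)|^2\big)\dd x\dd t
  \le C\big(1+T+\nrg_\eps(u_0,v_0)\big),
\end{equation*}
with a $\tau$-independent constant, which is the claim.
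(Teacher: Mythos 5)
Your proposal is correct and follows essentially the same route as the paper: apply the one-step estimate~\eqref{eq:allregular} at each step, multiply by $\tau$, sum and telescope, and then bound the remaining entropy difference $\entt(u_0,v_0)-\entt(u_\tau^N,v_\tau^N)$ in terms of $\nrg_\eps(u_0,v_0)$ using Lemma~\ref{lem:gero} together with the second-moment control and the at least quadratic growth of $F$ and $G$ (via~\eqref{hyp:hbound}). The only cosmetic difference is your explicit splitting at $u_0=1$ for the upper entropy bound, which the paper condenses into the inequality $u\log u\le C(1+F(u))$.
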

\begin{proof}  
  Assume $T=N\tau$ for simplicity.
  Apply estimate~\eqref{eq:allregular} to $(\hat u,\hat v)=(u_\tau^{n-1},v_\tau^{n-1})$ and $((u^*,v^*)=(u_\tau^n,v_\tau^n)$,
  and sum over $n=1,\ldots,N$.
  This yields
  \begin{multline*}
    \tau\sum_{n=1}^N\intrd\big[|\nabla F'(u_\tau^n)|^2+|\nabla G'(v_\tau^n)|^2\big]\dd x \\
    \le CN\tau \big(1 + \nrg_\eps(u_0,v_0)\big)
    + C\big[\nrg_\eps(u_0,v_0)-\nrg_\eps(u_\tau^N,v_\tau^N) + \entt(u_0,v_0) - \entt(u_\tau^N,v_\tau^N)\big].
  \end{multline*}
  The left-hand side of this inequality coincides with the left-hand side of~\eqref{eq:bpriori}.
  On the right hand side, first observe that $CN\tau=CT$,
  and that $\nrg_\eps(u_\tau^N,v_\tau^N)$ is positive and thus negligible.
  To arrive at~\eqref{eq:bpriori}, it suffices to show that
  \begin{align*}
    -C\big(1+\nrg_\eps(u,v)\big) \le \entt(u,v) \le C\big(1+\nrg_\eps(u,v)\big).
  \end{align*}
  The lower bound is easily obtained by combination of Lemma~\ref{lem:gero} from the Appendix
  with the following estimate, that is a consequence of~\eqref{eq:Phiupbound} and~\eqref{eq:PhibyLyp}:
  \begin{align*}
    \frac\Lambda2\intrd\big[|x-\underline x_\Phi|^2u+|x-\underline x_\Psi|^2v\big]\dd x
    \le \intrd \big[\Phi u+\Psi v\big]\dd x \le C+\ent_\eps(u,v).
  \end{align*}
  The control of~$\entt$ from above is a simple consequence
  of $u\log u\le C(1+F(u))$ and $v\log v\le C(1+G(v))$ thanks to the at least quadratic growth of~$F$ and~$G$, combined with~\eqref{hyp:hbound}.
\end{proof}
\begin{lemma}
  Let $p,q>1$ be such that
  \begin{align}
    \label{eq:pqrelation}
    (d-2)p<2d\quad\text{and}\quad\frac{q}{p'}< 1+\frac{2}{d}.
  \end{align}
  For each $T>0$, there is a $\tau$-independent constant~$C_T$ such that
  \begin{align}
    \label{eq:L2H1L1}
    \int_0^T\|F'(\tilde u_\tau)\|_{L^p}^q \dd t \le C_T. 
  \end{align}
\end{lemma}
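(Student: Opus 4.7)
The plan is to combine a Gagliardo--Nirenberg interpolation inequality with two ingredients already at our disposal: a pointwise-in-time bound on $\|F'(\tilde u_\tau(t))\|_{L^1(\R^d)}$, and the space-time bound~\eqref{eq:bpriori} on $\nabla F'(\tilde u_\tau)$ in $L^2((0,T)\times\R^d)$.

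First, I would verify that $\|F'(\tilde u_\tau(t))\|_{L^1(\R^d)}$ is bounded uniformly in $t\ge 0$ and $\tau>0$. Using the pointwise estimate $F'(s)\le C(s+F(s))$ --- valid for small $s$ by the behaviour~\eqref{eq:degeneracy}\&\eqref{hyp:powerF} near zero, and for large $s$ by~\eqref{eq:tripling} --- together with $\|\tilde u_\tau(t)\|_{L^1}=1$ and the uniform bound $\|F(\tilde u_\tau(t))\|_{L^1}\le 2\nrg_\eps(u_0,v_0)$ from~\eqref{eq:Emono}, this gives $\|F'(\tilde u_\tau(t))\|_{L^1}\le C_0$ uniformly.

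Second, for each $t$ I would apply the Gagliardo--Nirenberg inequality on $\R^d$,
\begin{align*}
  \|f\|_{L^p(\R^d)} \le C\|\nabla f\|_{L^2(\R^d)}^{\theta}\|f\|_{L^1(\R^d)}^{1-\theta},
  \qquad \theta = \frac{2d(p-1)}{p(d+2)},
\end{align*}
to $f=F'(\tilde u_\tau(t))$. The first condition in~\eqref{eq:pqrelation}, $(d-2)p<2d$, places $p$ strictly below the Sobolev exponent $2d/(d-2)$ (vacuous for $d\le 2$), which is exactly what guarantees $\theta\in[0,1)$ and the applicability of GN. Raising the inequality to the $q$-th power and invoking the uniform $L^1$-bound from the previous step yields
\begin{align*}
  \|F'(\tilde u_\tau(t))\|_{L^p}^q \le C\|\nabla F'(\tilde u_\tau(t))\|_{L^2}^{q\theta}.
\end{align*}

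Third, I integrate in $t$ over $[0,T]$ and apply H\"older's inequality in time. A direct computation shows that the second condition in~\eqref{eq:pqrelation}, namely $q/p'<1+2/d$, is equivalent to $q\theta<2$; thus with the conjugate exponent $2/(q\theta)$ one obtains
\begin{align*}
  \int_0^T\|F'(\tilde u_\tau)\|_{L^p}^q\dd t
  \le C\,T^{1-q\theta/2}\left(\int_0^T\|\nabla F'(\tilde u_\tau)\|_{L^2}^2\dd t\right)^{q\theta/2},
\end{align*}
and the right-hand side is bounded by a $\tau$-independent constant $C_T$ thanks to~\eqref{eq:bpriori}. The main ``obstacle'' is really only bookkeeping: tracing that the two hypotheses in~\eqref{eq:pqrelation} correspond exactly to (i) the applicability of the GN inequality with $L^1$-$H^1$ endpoints, and (ii) the integrability in time of the resulting power of $\|\nabla F'(\tilde u_\tau)\|_{L^2}$. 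The analogous estimate for $G'(\tilde v_\tau)$ follows by the same argument.
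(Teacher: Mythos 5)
Your proposal is correct and follows essentially the same route as the paper's proof: the Gagliardo--Nirenberg interpolation with the same exponent $\theta=\frac{2d}{d+2}\frac{1}{p'}$, the uniform $L^1$-bound on $F'(\tilde u_\tau)$ via $F'(s)\le C(s+F(s))$, \eqref{eq:Emono} and unit mass, and H\"older in time using $q\theta<2$ together with~\eqref{eq:bpriori}. Nothing is missing.
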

\begin{proof}
  Thanks to~\eqref{eq:pqrelation}, we have that
  \begin{align}
    \theta\coloneqq  \frac{2d}{d+2}\frac{1}{p'}<1
    \quad\text{and}\quad
    q\theta<2.
  \end{align}
  Therefore, by the Gagliardo--Nirenberg interpolation inequality,
  \begin{align*}
    \int_0^T\|F'(\tilde u_\tau)\|_{L^p}^q\dd t
    \le CT^{1-q\theta/2}\sup_{0<t<T}\|F'(\tilde u_\tau(t))\|_{L^1}^{q(1-\theta)} \left(\int_0^T\|\nabla F'(\tilde u_\tau)\|_{L^2}^2\dd t\right)^{q\theta/2}. 
  \end{align*}
  From~\eqref{eq:bpriori} the $\tau$-uniform boundedness of the term with $\nabla F'(\tilde u_\tau)$ follows.
  For the other term, we first observe that $F'(s)\le C(s+F(s))$ which for $s \geq 1$ is a direct consequence of~\eqref{eq:tripling}, while for $s \leq 1$, it is obtained from~\eqref{eq:degeneracy} in combination with the uniform boundedness of $F''(t)$ for $t \in (0,1]$.
  Since~$\tilde u_\tau$ is of unit mass, and because of the uniform bound~\eqref{eq:Emono} on $F(\tilde u_\tau)$ in $L^1(\R^d)$,  
  also $F'(\tilde u_\tau)$ is bounded in $L^1(\R^d)$, uniformly in $t\in[0,T]$ and in~$\tau$.
\end{proof}

\subsection{Convergence proofs}
The statements of Proposition~\ref{prp:existence} are proven 
in the three Lemmas~\ref{lem:Wconv}, \ref{lem:Lconv}, and~\ref{lem:ELconv} below.
\begin{lemma}
  \label{lem:Wconv}
  For every $\eps \in [0,\eps^*]$, there are curves $u_*,v_* \colon [0,\infty)\to\prbtwo$, H\"older continuous in~$\wass$,
  such that, along a suitable sequence $\tau\downarrow0$,
  the interpolations~$\tilde u_\tau(t)$ and~$\tilde v_\tau(t)$ converge to~$u_*(t)$ and~$v_*(t)$, respectively,
  weakly in $L^1(\R^d)$, at every $t\ge0$.
\end{lemma}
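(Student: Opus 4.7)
The plan is a refined Arzelà--Ascoli argument in two stages: first extract subsequential limits at each fixed time via $\wass$-compactness, then combine with the $\tau$-uniform quasi-continuity estimate~\eqref{eq:holder} to get a single diagonal sequence that works at every $t\ge0$. The weak $L^1$ convergence will be upgraded from narrow convergence by exploiting the uniform bound on $\int[F(\tilde u_\tau)+G(\tilde v_\tau)]\dd x$ from~\eqref{eq:Emono}.

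\emph{Compactness at a fixed time.} Fix $t\ge0$. The uniform second-moment bound~\eqref{eq:mom} provides tightness of the family $\{\tilde u_\tau(t)\}_{\tau>0}\subset\prbtwo$. Combined with the narrow lower semicontinuity framework from Section~\ref{sct:wasserstein}, this gives sequential precompactness in $\prbtwo$ with respect to $\wass$, once we also control second moments along the sequence (which the bound~\eqref{eq:mom} provides uniformly). Further, hypothesis~\eqref{hyp:quadratic_growth} implies $s\log s\le C(1+F(s))$ for all $s\ge0$ up to adjusting~$C$; hence~\eqref{eq:Emono} yields a $\tau$-uniform bound on $\int \tilde u_\tau(t)\log\tilde u_\tau(t)\dd x$. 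By the de la Vall\'ee--Poussin criterion, $\{\tilde u_\tau(t)\}_{\tau}$ is uniformly integrable in $L^1(\R^d)$, and by Dunford--Pettis, sequentially weakly precompact in $L^1(\R^d)$. The analogous conclusion holds for $\{\tilde v_\tau(t)\}_\tau$.

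\emph{Diagonal extraction.} Choose a countable dense subset $\{t_k\}_{k\in\N}\subset[0,\infty)$. By the compactness above and a standard diagonal argument, extract a sequence $\tau_j\downarrow0$ along which $\tilde u_{\tau_j}(t_k)$ and $\tilde v_{\tau_j}(t_k)$ converge simultaneously, both weakly in $L^1(\R^d)$ and in $\wass$, for every $k\in\N$; denote the limits by $u_*(t_k)$ and $v_*(t_k)$. The $\wass$-equicontinuity estimate~\eqref{eq:holder} carried to the limit yields
\begin{equation*}
  \wass\big(u_*(t_k),u_*(t_\ell)\big)\le C\sqrt{|t_k-t_\ell|},
\end{equation*}
so $u_*$ extends by uniform continuity to a H\"older-$\tfrac12$ curve $u_* \colon [0,\infty)\to\prbtwo$, and similarly for $v_*$. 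For $t\notin\{t_k\}$ and $t_k\to t$, the triangle inequality combined again with~\eqref{eq:holder} gives
\begin{equation*}
  \wass\big(\tilde u_{\tau_j}(t),u_*(t)\big)\le \wass\big(\tilde u_{\tau_j}(t),\tilde u_{\tau_j}(t_k)\big)+\wass\big(\tilde u_{\tau_j}(t_k),u_*(t_k)\big)+\wass\big(u_*(t_k),u_*(t)\big),
\end{equation*}
and letting first $\tau_j\to0$ and then $t_k\to t$ establishes $\wass$-convergence at~$t$.

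\emph{Upgrading narrow to weak $L^1$.} The remaining point --- and the only mildly delicate one --- is that $\wass$-convergence only gives narrow convergence of measures, which is weaker than weak $L^1$-convergence. However, at every $t\ge0$ the extracted sequence $\{\tilde u_{\tau_j}(t)\}_j$ is uniformly integrable in $L^1(\R^d)$ by the argument in the first paragraph, since the $L^1$-bound on $F(\tilde u_\tau(t))$ holds \emph{for every} $t$ (not just on the dense set). Uniform integrability together with a.e.\ or narrow convergence to a density promotes the convergence to weak $L^1$ convergence: indeed, given any $\phi\in L^\infty(\R^d)$, splitting the integrand on $\{|x|\le R\}$ (where one uses uniform integrability and Egoroff) and on $\{|x|>R\}$ (where the tightness~\eqref{eq:mom} handles the tail) yields $\int\phi\,\tilde u_{\tau_j}(t)\dd x\to\int\phi\, u_*(t)\dd x$. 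Thus $\tilde u_{\tau_j}(t)\rightharpoonup u_*(t)$ weakly in $L^1(\R^d)$ at every $t\ge0$, and identically for the $v$-component. The main obstacle is thus this interplay between tightness, uniform integrability and narrow convergence, but each ingredient has already been secured by the estimates of Lemma~\ref{lem:classical}.
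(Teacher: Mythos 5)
There is a genuine gap in the diagonal-extraction step: you claim that the uniform second-moment bound~\eqref{eq:mom} yields sequential precompactness of $\{\tilde u_\tau(t_k)\}_\tau$ \emph{with respect to $\wass$}, and you then rely on $\wass(\tilde u_{\tau_j}(t_k),u_*(t_k))\to0$ as the middle term of your triangle inequality at a general time $t$. A bound on second moments gives tightness, hence narrow (Prokhorov) precompactness, but \emph{not} precompactness in the Wasserstein metric: convergence in $\wass$ is equivalent to narrow convergence \emph{plus} convergence of second moments, and a mere uniform bound does not prevent a fixed amount of second moment from escaping to infinity (think of densities placing mass $1/n$ at distance $\sqrt n$ from the origin: second moments are bounded, the narrow limit exists, but the $\wass$-distance to the limit does not tend to zero). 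None of the estimates of Lemma~\ref{lem:classical} give uniform integrability of $|x|^2\tilde u_\tau(t)$, so the asserted $\wass$-convergence at the dense times $t_k$ is unjustified, and with it the extension argument to arbitrary $t$ as you wrote it.

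The repair is to run your two-stage scheme entirely at the level of narrow convergence, using lower semicontinuity of $\wass$ under narrow convergence in the two places where you invoked genuine $\wass$-convergence: first, the H\"older estimate for the limit curve at the dense times follows from $\wass(u_*(t_k),u_*(t_\ell))\le\liminf_j\wass(\tilde u_{\tau_j}(t_k),\tilde u_{\tau_j}(t_\ell))\le C\sqrt{|t_k-t_\ell|}$; second, at an arbitrary $t$, any narrow accumulation point $\mu$ of $\{\tilde u_{\tau_j}(t)\}_j$ (which exists by tightness) satisfies $\wass(\mu,u_*(t_k))\le\liminf_j\wass(\tilde u_{\tau_j}(t),\tilde u_{\tau_j}(t_k))\le C\sqrt{|t-t_k|}$, so letting $t_k\to t$ identifies $\mu=u_*(t)$ uniquely and the whole sequence converges narrowly at $t$. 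This is precisely the content of the generalized Arzel\`a--Ascoli theorem \cite[Proposition 3.3.1]{AGS} that the paper invokes, so your argument is a hands-on version of the same route once corrected. Your final upgrade from narrow to weak $L^1$ convergence is in substance the paper's Dunford--Pettis argument (superlinearity of~$F$,~$G$ plus~\eqref{eq:Emono}); just note that the appeal to Egoroff is out of place, since you do not have a.e.\ convergence --- the clean way is to extract a weak $L^1$ limit by Dunford--Pettis and identify it with $u_*(t)$ by uniqueness of narrow limits.
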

\begin{proof}
  This lemma is a consequence of the generalized Arzel\`a--Ascoli theorem~\cite[Proposition 3.3.1]{AGS}.
  Lemma~\ref{lem:holder} above provides
  a uniform modulus of (quasi-)continuity for~$\tilde u_\tau$ and~$\tilde v_\tau$ in~$\wass$;
  the topology induced by~$\wass$ is stronger than the narrow one.
  Further, the values of~$\tilde u_\tau$ and~$\tilde v_\tau$ belong to a narrowly compact set
  thanks to the uniform boundedness of second moments~\eqref{eq:mom}.
  The aforementioned proposition yields the narrow convergence along a sequence $\tau\downarrow0$
  of $\tilde u_\tau(t)$ and $\tilde v_\tau(t)$ to respective limits $u_*(t)$ and $v_*(t)$ for each $t\ge0$, and~$u_*$,~$v_*$ are continuous with respect to~$\wass$.
  By lower semi-continuity of~$\wass$ under narrow convergence,
  the estimate~\eqref{eq:holder} is inherited by the limits~$u_*$,~$v_*$ in the form
  \begin{align*}
    \wass\big(u_*(t),u_*(s)\big) \le C|t-s|^{1/2}, \quad \wass\big(v_*(t),v_*(s)\big) \le C|t-s|^{1/2},
  \end{align*}
  which is the claimed Hölder continuity of~$u_*$,~$v_*$ with respect to~$\wass$. Finally, the upgrade from narrow convergence of $\tilde u_\tau(t)$ and $\tilde v_\tau(t)$ to weak convergence in $L^1(\R^d)$ after passage to a suitable subsequence $\tau\downarrow0$
  is obtained from the boundedness of $F(\tilde u_\tau(t))$ and $G(\tilde v_\tau(t))$ in $L^1(\R^d)$,
  see~\eqref{eq:Emono}.
  Indeed, since~$F$ and~$G$ are super-linear at infinity in view of~\eqref{hyp:quadratic_growth}, the Dunford--Pettis criterion applies.
\end{proof}
\begin{lemma}
  \label{lem:Lconv}
  For every $\eps \in [0,\eps^*]$ and every $T>0$, we have along a suitable sequence $\tau\downarrow0$
  \begin{align}
    \label{eq:uinL2}
    \tilde u_\tau&\to u_*\quad \text{strongly in  $L^2([0,T]\times\R^d)$}, \\
    \label{eq:Fuloc}
    F'(\tilde u_\tau) &\to F'(u_*) \quad \text{strongly in $L^2_{loc}([0,T] \times \R^d)$}, \\
    \label{eq:FuinH1}
    \nabla F'(\tilde u_\tau)&\rightharpoonup\nabla F'(u_*)\quad \text{weakly in $L^2([0,T]\times\R^d)$}.
  \end{align}
\end{lemma}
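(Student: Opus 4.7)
My plan is to obtain first strong $L^2$-compactness of $\tilde u_\tau$ by an Aubin--Lions-type argument tailored to Wasserstein-equicontinuous curves, then to deduce the local strong convergence \eqref{eq:Fuloc} via continuity of $F'$ and Vitali's theorem, and finally to derive \eqref{eq:FuinH1} from the uniform $L^2$-bound on gradients combined with the strong convergence just obtained; the argument for $\tilde v_\tau$ and $G'$ is identical and need not be repeated.

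\textbf{Ingredients.} By the at-least-quadratic growth of $F$ from hypothesis~\eqref{hyp:quadratic_growth} and the mass bound \eqref{eq:Emono}, the interpolation $\tilde u_\tau$ is $\tau$-uniformly bounded in $L^\infty(0,T;L^2(\R^d))$, and by \eqref{eq:mom} its second moments are $\tau$-uniformly bounded. Estimate \eqref{eq:bpriori} provides a $\tau$-uniform bound on $\nabla F'(\tilde u_\tau)$ in $L^2([0,T]\times\R^d)$, and Lemma~\ref{lem:holder} supplies Hölder continuity in the $\wass$-metric, uniformly in $\tau$.

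\textbf{Strong compactness of $\tilde u_\tau$ (main obstacle).} The key step is a variant of the Rossi--Savaré / Aubin--Lions--Simon compactness lemma adapted to Wasserstein-equicontinuous curves of probability densities. The space ingredient is the $L^2(0,T;H^1)$-bound on $F'(\tilde u_\tau)$: together with Rellich's theorem and the strict monotonicity and continuity of $F'$, this yields precompactness of each slice $\tilde u_\tau(t)$ in $L^1_{\mathrm{loc}}(\R^d)$, which the second-moment bound upgrades to precompactness in $L^1(\R^d)$. The time ingredient is the Wasserstein equicontinuity from Lemma~\ref{lem:holder}, which becomes bona fide Hölder continuity in the limit $\tau\downarrow 0$. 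The metric-space Aubin--Lions-type lemma then yields relative compactness of $(\tilde u_\tau)$ in $L^1([0,T]\times\R^d)$; by diagonal extraction, a further subsequence converges a.e. on $[0,T]\times\R^d$. Combining this a.e. convergence with the $L^\infty(0,T;L^2)$-bound and the equi-integrability supplied by \eqref{eq:Emono} together with the superlinear growth of $F$, Vitali's theorem upgrades the convergence to the strong $L^2$-convergence \eqref{eq:uinL2}.

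\textbf{Consequences.} Continuity of $F'$ together with the a.e.\ convergence of $\tilde u_\tau$ yields $F'(\tilde u_\tau)\to F'(u_*)$ a.e. Applying estimate \eqref{eq:L2H1L1} with an exponent pair $(p,q)$ satisfying~\eqref{eq:pqrelation} and $p>2$ bounds $\|F'(\tilde u_\tau)\|_{L^q(0,T;L^p)}$ uniformly in $\tau$, and hence provides equi-integrability of $|F'(\tilde u_\tau)|^2$ on every bounded subset of $[0,T]\times\R^d$. Vitali's theorem then yields the strong local convergence \eqref{eq:Fuloc}. Finally, \eqref{eq:bpriori} guarantees that a subsequence of $\nabla F'(\tilde u_\tau)$ converges weakly in $L^2([0,T]\times\R^d)$, and the limit is identified with $\nabla F'(u_*)$ by testing against any $\Xi\in C^\infty_c((0,T)\times\R^d;\R^d)$, performing an integration by parts, and passing to the limit using the strong $L^2_{\mathrm{loc}}$-convergence of $F'(\tilde u_\tau)$ against $\dv\Xi$. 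This establishes \eqref{eq:FuinH1} and completes the proof.
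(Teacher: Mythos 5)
Your overall architecture is the same as the paper's (a Rossi--Savar\'e-type Aubin--Lions argument combining the time-integrated $H^1$-control of $F'(\tilde u_\tau)$ with the Wasserstein equicontinuity of Lemma~\ref{lem:holder}, then Vitali for \eqref{eq:Fuloc} and weak compactness plus identification for \eqref{eq:FuinH1}), but the step that upgrades a.e.\ convergence to the strong convergence \eqref{eq:uinL2} in $L^2([0,T]\times\R^d)$ has a genuine gap. You claim the required equi-integrability of $|\tilde u_\tau|^2$ from \eqref{eq:Emono} ``together with the superlinear growth of $F$''. But the hypotheses only guarantee at least quadratic growth ($m=2$ is admissible, e.g.\ $F(u)=u^2/2$), so \eqref{eq:Emono} yields nothing more than a $\tau$- and $t$-uniform bound on $\|\tilde u_\tau(t)\|_{L^2}$; a uniform $L^2$ bound plus a.e.\ convergence does not rule out concentration and hence does not give strong $L^2$ convergence. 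What is actually needed is integrability of $\tilde u_\tau$ (equivalently of $F'(\tilde u_\tau)$, by its at least linear growth) to an exponent strictly above $2$ in space and/or time: this is exactly what \eqref{eq:L2H1L1} provides, and it must be invoked already here, not only for \eqref{eq:Fuloc}. Concretely, the paper takes $p=2$, $2<q<2+4/d$ to get uniform integrability of $t\mapsto\|\tilde u_\tau(t)\|_{L^2}^2$, while to control the spatial tails of $|\tilde u_\tau|^2$ one interpolates an $L^p$ bound with $p>2$ against the second-moment bound \eqref{eq:mom} (H\"older with weight $|x|^{2(p-2)/(p-1)}$); the second-moment bound alone, which is all you cite, only handles tightness at exponent one.

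Two smaller points. First, your assertion that the $L^2(0,T;H^1)$ bound on $F'(\tilde u_\tau)$ yields precompactness of \emph{each slice} $\tilde u_\tau(t)$ is not correct as stated, since a time-integrated bound controls no individual slice; the correct formulation (and the one the compactness lemma of Rossi--Savar\'e actually uses) is compactness of the sublevel sets of a spatial functional such as $u\mapsto\|F'(u)\|_{H^1}^2+\int|x|^2u\dd x$, combined with a uniform bound on its time integral along the curves. Second, for the equi-integrability of $|F'(\tilde u_\tau)|^2$ on bounded sets used for \eqref{eq:Fuloc} you need both exponents above two (e.g.\ $2<p=q<2+2/d$, which is admissible under \eqref{eq:pqrelation}); requiring only $p>2$ is not enough in the time direction. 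The identification of the weak limit in \eqref{eq:FuinH1} by testing against divergence-free of nothing in particular---i.e.\ integrating by parts against smooth compactly supported vector fields and using \eqref{eq:Fuloc}---is fine.
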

\begin{proof}
  For the proof of~\eqref{eq:uinL2}, 
  we apply the generalized version~\cite[Theorem 2]{Rossi} of the Aubin--Lions Lemma.
  On the Banach space $B\coloneqq L^2(\R^d)$, define the \emph{normal coercive integrand} by
  \begin{align*}
    \mathcal{F}(u) =  \|F'(u)\|_{H^1}^2 + \intrd |x|^2u\dd x,
  \end{align*}
  and the compatible map $g \colon B\times B\to[0,\infty]$ by
  \begin{align*}
    g(u,u') \coloneqq  \wass(u,u'),
  \end{align*}
  with the conventions
  that $\mathcal{F}(u)=\infty$ unless $u\in\prbtwo$ with $F'(u) \in H^1(\R^d)$,
  and that $g(u,u')=\infty$ unless $u,u'\in\prbtwo$.
  Below we verify lower semi-continuity of~$\mathcal{F}$ and compactness of sublevel sets;
  we further show that
  \begin{align}
    \label{eq:Kolmogorov}
    \int_0^{T-h} g\big(\tilde u_\tau(t+h),\tilde u_\tau(t)\big)\dd t \to 0 \quad\text{uniformly in~$\tau$ as $h\downarrow0$},
  \end{align}
  and that
  \begin{align}
    \label{eq:coercive}
    \int_0^T\mathcal{F}(\tilde u_\tau(t))\dd t \le C_T
  \end{align}
  with some constant~$C_T$ depending only on $T>0$.
  In conclusion,~\cite[Theorem 2]{Rossi} then yields that $\tilde u(t)$ converges in $B=L^2(\R^d)$,
  in measure with respect to $t\in(0,T)$, along a sequence $\tau\downarrow0$.
  That limit necessarily coincides with~$u_*$ obtained in the proof of Lemma~\ref{lem:Wconv} above.

  To prove lower semi-continuity of~$\mathcal{F}$,
  consider a sequence~$(u_n)$ converging to~$u_*$ in $L^2(\R^d)$ such that $(\mathcal{F}(u_n))$ has a finite limit.
  Without loss of generality, we may assume that~$u_n$ even converges pointwise a.e..
  Fatou's lemma directly yields
  \begin{align}
    \label{eq:momlsc}
    \intrd |x|^2u \dd x\le \liminf_{n\to\infty}\intrd |x|^2u_n\dd x.
  \end{align}
  Further, by boundedness of the sequence $(F'(u_n))$ in the reflexive space $H^1(\R^d)$,
  a suitable subsequence has a weak $H^1$-limit~$\nu_*$.
  By Rellich's theorem, this convergence is actually strong in $L^2_\text{loc}(\R^d)$,
  and thus also pointwise a.e..
  By a.e.~pointwise convergence almost of~$(u_n)$ and the continuity of~$F'$,
  we conclude $\nu_*=F'(u_*)$.
  Weak lower semi-continuity of norms implies
  \begin{align*}
    \|F'(u_*)\|_{H^1}^2 =\|\nu_*\|_{H^1}^2\le \liminf_{n\to\infty} \|F'(u_n)\|_{H^1}^2,
  \end{align*}
  and thus with~\eqref{eq:momlsc} we conclude that
  \begin{align*}
    \mathcal{F}(u_*)\le\lim_{n\to\infty}\mathcal{F}(u_n).
  \end{align*}
  Concerning compactness, consider the $\bar{\mathcal{F}}$-sublevel set~$S$ of $\mathcal{F}$.
  By boundedness of the $H^1$-norm of $F'(u)$ for all $u \in S$,
  there is a sequence $(u_n)$ in~$S$ for which $F'(u_n)$ converges strongly in $L^2_\text{loc}(\R^d)$.
  Since~$F'$ has at least linear growth, also~$(u_n)$ itself converges strongly in $L^2_\text{loc}(\R^d)$.
  To show that this convergence is not just locally, observe that by interpolation,
  there is a $p>2$ such that $F'(u)$ is uniformly bounded in $L^p(\R^d)$ for all~$u$ in the sublevel~$S$.
  Using that $s^p \le C(s + F'(s)^p)$ for a suitable constant~$C$ thanks to the at least linear growth of~$F'$,
  it follows that there is a $L^p$-uniform bound for the $u_n\in S$ as well.
  H\"older's inequality yields
  \begin{align*}
    \intrd |x|^{2(p-2)/(p-1)}u_n^2 \dd x \le \|u_n\|_{L^p}^{p/(p-1)} \left(\intrd |x|^2u_n\dd x\right)^{(p-2)/(p-1)},
  \end{align*}
  and thus~$(u_n)$ is tight and converges in $L^2(\R^d)$.
  This proves compactness.
  
  The property~\eqref{eq:Kolmogorov} follows from~\eqref{eq:holder}.
  More precisely, we have
  \begin{align*}
    \int_0^{T-h}g\big(\tilde u_\tau(t+h),\tilde u_\tau(t)\big)\dd t \le C T \big(\sqrt h+\sqrt \tau\big).
  \end{align*}
  Next, the estimate~\eqref{eq:coercive} is a consequence of the a priori bound~\eqref{eq:bpriori} on $\nabla F'(\tilde u_\tau)$,
  estimate~\eqref{eq:L2H1L1} with $p=q=2$,
  and the moment control~\eqref{eq:mom}.
  By means of \cite[Theorem 2]{Rossi}, we obtain convergence of $\tilde u_\tau(t)$ to $u_*(t)$ in $L^2(\R^d)$,
  in measure with respect to $t\in[0,T]$, along a sequence $\tau\downarrow0$.
  Estimate~\eqref{eq:L2H1L1} with $2=p<q<2+4/d$
  further yields a $\tau$-uniform control  on $\|\tilde u_\tau(t)\|_{L^2}$ in $L^q(0,T)$, since~$F'$ has at least linear growth,
  and thus uniform integrability to exponent two in time.
  This finishes the proof of~\eqref{eq:uinL2}.
  
  Next, we show the convergence~\eqref{eq:Fuloc} of $(F'(\tilde u_\tau))$ to $F'(u_*)$ locally in $L^2([0,T] \times \R^d)$.
  By~\eqref{eq:uinL2}, we may assume without loss of generality that the chosen sequence $(\tilde u_\tau)$ converges pointwise to~$u_*$ a.e.~on $[0,T]\times\R^d$.
  By continuity of~$F'$, the sequence $(F'(\tilde u_\tau))$ converges to $F'(u_*)$ pointwise almost everywhere.
  Moreover, estimate~\eqref{eq:L2H1L1} with $2<p=q<2+2/d$ provides
  $\tau$-uniform integrability of $F'(\tilde u_\tau)$ on $[0,T]\times\R^d$ to exponent two.
  Hence, by Vitali's convergence theorem, we get convergence of $(F'(\tilde u_\tau))$ to $F'(u_*)$ in $L^2_{loc}([0,T] \times \R^d)$.
  
  It remains to verify~\eqref{eq:FuinH1}, i.e.,
  the weak convergence of $(\nabla F'(\tilde u_\tau))$ to $\nabla F'(u_*)$ in $L^2([0,T]\times\R^d)$.
  In fact, weak $L^2$-convergence to \emph{some} limit $\nabla\zeta$ follows immediately
  from the boundedness~\eqref{eq:bpriori} and the local convergence~\eqref{eq:Fuloc} via Alaoglu's theorem.
  Using once again the local convergence of $(F'(\tilde u_\tau))$ to $F'(u_*)$ we can identify the limit~$\zeta$ as $F'(u_*)$.
\end{proof}
\begin{lemma}
  \label{lem:ELconv}
  The limits $(u_*,v_*)$ obtained in Lemmas~\ref{lem:Wconv}\&\ref{lem:Lconv}
  satisfy the weak formulations~\eqref{eq:weaksol} for every test function $\xi\in C^\infty_c((0,\infty)\times\R^d)$.
\end{lemma}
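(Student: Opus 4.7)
The plan is to test the time-discrete weak formulation from Proposition~\ref{prp:dweak} with $\zeta_n \coloneqq \xi(n\tau,\cdot)$ at each step, multiply by $\tau$, and sum over $n\geq 1$. Since $\xi$ has compact support in $(0,\infty)\times\R^d$, all sums are finite. For the $u$-component this produces
\begin{equation*}
\sum_{n=1}^\infty \intrd (u_\tau^n - u_\tau^{n-1})\,\zeta_n \dd x
= \tau\sum_{n=1}^\infty \intrd u_\tau^n \nabla\big[F'(u_\tau^n)+\Phi+\eps\partial_uh(u_\tau^n,v_\tau^n)\big]\cdot\nabla\zeta_n \dd x + \tau\sum_{n=1}^\infty R_u^n,
\end{equation*}
and I pass to the limit $\tau\downarrow 0$ along the subsequence selected in Lemmas~\ref{lem:Wconv}--\ref{lem:Lconv}.

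A discrete summation by parts on the left-hand side yields a boundary contribution $-\intrd u_0\,\xi(\tau,\cdot)\dd x$, which vanishes as $\tau\downarrow 0$ because $\xi(0,\cdot)\equiv 0$, together with an interior term $-\int_0^\infty\intrd \tilde u_\tau(t,x)\,[\xi(t+\tau,x)-\xi(t,x)]/\tau\,\dd x\dd t$. The difference quotient of $\xi$ converges uniformly to $\partial_t\xi$, and the strong $L^2([0,T]\times\R^d)$ convergence $\tilde u_\tau\to u_*$ from Lemma~\ref{lem:Lconv} identifies the limit as $-\int_0^\infty\intrd u_*\partial_t\xi\dd x\dd t$. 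The remainder is telescoping and vanishes:
\begin{equation*}
\Big|\tau\sum_{n=1}^\infty R_u^n\Big| \leq \tau\|\xi\|_{C^2}\sum_{n=1}^\infty\big(\nrg_\eps(u_\tau^{n-1},v_\tau^{n-1})-\nrg_\eps(u_\tau^n,v_\tau^n)\big) \leq \tau\|\xi\|_{C^2}\nrg_\eps(u_0,v_0) \to 0.
\end{equation*}

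For the flux sum I introduce the piecewise-constant-in-time interpolation $\tilde\xi_\tau^*$ of $\xi$, so the sum equals $\int_0^\infty\intrd \tilde u_\tau \nabla[F'(\tilde u_\tau)+\Phi+\eps\partial_uh(\tilde u_\tau,\tilde v_\tau)]\cdot\nabla\tilde\xi_\tau^*\dd x\dd t$, with $\nabla\tilde\xi_\tau^*\to\nabla\xi$ uniformly. Because $\nabla\tilde\xi_\tau^*$ is uniformly bounded with compact support and $\tilde u_\tau\to u_*$ strongly in $L^2_{\loc}$, the product $\tilde u_\tau\nabla\tilde\xi_\tau^*$ converges strongly in $L^2([0,T]\times\R^d)$ to $u_*\nabla\xi$; pairing against the weak $L^2$-convergence $\nabla F'(\tilde u_\tau)\rightharpoonup\nabla F'(u_*)$ from~\eqref{eq:FuinH1} handles the $F'$-piece, while the $\Phi$-piece is immediate. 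For the coupling piece I use
\begin{equation*}
\nabla\partial_uh(\tilde u_\tau,\tilde v_\tau)
= \partial_\rho\theta_u\big(F'(\tilde u_\tau),G'(\tilde v_\tau)\big)\,\nabla F'(\tilde u_\tau)
+ \partial_\eta\theta_u\big(F'(\tilde u_\tau),G'(\tilde v_\tau)\big)\,\nabla G'(\tilde v_\tau).
\end{equation*}
By $2$-boundedness, the scalar coefficients are globally bounded; combined with the $L^2_{\loc}$-convergence of $F'(\tilde u_\tau)$, $G'(\tilde v_\tau)$ from Lemma~\ref{lem:Lconv} (after extracting an a.e.-convergent subsequence) and continuity of the partial derivatives of $\theta_u$, dominated convergence gives strong $L^p_{\loc}$-convergence of those coefficients for every $p<\infty$ to their counterparts at $(F'(u_*),G'(v_*))$. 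Multiplying by the strongly convergent $\tilde u_\tau\nabla\tilde\xi_\tau^*$ preserves strong $L^2_{\loc}$-convergence, and the weak $L^2$-convergences of $\nabla F'(\tilde u_\tau)$ and $\nabla G'(\tilde v_\tau)$ then close each pairing.

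The hardest step is this coupling term: reconciling the weak $H^1$-convergence of $F'(\tilde u_\tau)$, $G'(\tilde v_\tau)$ with the nonlinearity $\theta_u$ genuinely requires both the uniform bounds supplied by $2$-boundedness and the a.e.-convergence furnished by the strong $L^2_{\loc}$-convergence of Lemma~\ref{lem:Lconv}; weak convergence of the gradients alone would not suffice to identify the limit. The $v$-equation is handled by an entirely symmetric argument, yielding both components of~\eqref{eq:weaksol}.
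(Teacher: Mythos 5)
Your proposal is correct and follows essentially the same route as the paper: test the discrete Euler--Lagrange identity of Proposition~\ref{prp:dweak} with (time-slices of) $\xi$, telescope the remainder against the energy monotonicity, pass to the limit in the time-difference quotient via the strong convergence of $\tilde u_\tau$, and handle the coupling term through the chain-rule identity for $\nabla\partial_uh$, combining $2$-boundedness and the a.e./in-measure convergence of the coefficients $\partial_\rho\theta_u,\partial_\eta\theta_u$ with the weak $L^2$-convergence of $\nabla F'(\tilde u_\tau)$, $\nabla G'(\tilde v_\tau)$. The only cosmetic difference is that you sample $\xi$ at the grid points and attach the bounded coefficients to the strongly convergent factor $\tilde u_\tau\nabla\tilde\xi_\tau^*$, whereas the paper integrates the exact $\xi(t,\cdot)$ in time and argues via convergence in measure of the coefficients against the weakly convergent gradients; both are equivalent bookkeeping of the same weak--strong pairing.
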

\begin{proof}
  By abuse of notation,~$\tau$ will always denote an element of the sequence $\tau\downarrow0$
  along which $(\tilde u_\tau,\tilde v_\tau)$ converges to $(u_*,v_*)$ in the sense of Lemmas~\ref{lem:Wconv}\&\ref{lem:Lconv}.

  Assume that the support of~$\xi$ lies in $(0,T)\times\Omega$, for some bounded open set $\Omega\subset\R^d$.
  For each $t\in(\tau,T)$, let~$n$ be such that $(n-1)\tau<t\le n\tau$,
  and use $\zeta\coloneqq \xi(t;\cdot)$ as test function in the first equation of~\eqref{eq:dweak} for that~$n$.
  Integrate these equations in $t\in(\tau,T)$.
  The result can be written as:
  \begin{equation}
    \label{eq:toEL}
    \begin{split}
    &\int_\tau^T\intrd \frac{\tilde u_\tau(t)-\tilde u_\tau(t-\tau)}\tau\xi(t)\dd x\dd t \\
    &= \int_\tau^T\intrd \tilde u_\tau(t)\nabla\big[F'(\tilde u_\tau(t))+\Phi+\eps \partial_u h(\tilde u_\tau(t),\tilde v_\tau(t))\big]\cdot\nabla\xi(t)\dd x\dd t
    + \int_\tau^TR_u(t)\dd t,
    \end{split}
  \end{equation}
  where, thanks to~\eqref{eq:rest},
  \begin{align*}
    \left|\int_\tau^TR_u(t)\dd t\right| 
    & \le \tau\sum_{n=1}^\infty \sup_{0<t<T}\|\xi(t;\cdot)\|_{C^2}\big(\nrg_\eps(u_\tau^{n-1},v_\tau^{n-1}) - \nrg_\eps(u_\tau^n,v_\tau^n)\big) \\
    & \le \tau \sup_{0<t<T}\|\xi(t;\cdot)\|_{C^2}\nrg_\eps(u_0,v_0),
  \end{align*}
  and so $\int_0^TR_u(t)\dd t\to0$ as $\tau\downarrow0$.
  For the integral on the left-hand side of~\eqref{eq:toEL}, we obtain
  \begin{align*}
    \int_\tau^T\intrd \frac{\tilde u_\tau(t)-\tilde u_\tau(t-\tau)}\tau\xi(t)\dd x\dd t
    = \int_0^{T} \intrd \tilde u_\tau(t)\frac{\xi(t+\tau)-\xi(t)}\tau\dd x\dd t
  \end{align*}
  for $\tau > 0$ sufficiently small (recall that the support of~$\xi$ is contained in $(0,T)\times\Omega$),
  which in turn implies
  \begin{align*}
    \int_\tau^T\intrd \frac{\tilde u_\tau(t)-\tilde u_\tau(t-\tau)}\tau\xi(t)\dd x\dd t \to \int_0^T \intrd u_*(t)\partial_t\xi(t)\dd x\dd t,
  \end{align*}
  thanks to the convergence of~$\tilde u_\tau$ to~$u_*$ in $L^1((0,T)\times \Omega)$,
  and the uniform convergence of difference quotients of~$\xi$.

  It remains to verify the convergence of the integral on the right-hand side of~\eqref{eq:toEL}.
  By the strong $L^2$-convergence~\eqref{eq:uinL2} of~$\tilde u_\tau$ to~$u_*$,
  and since $\nabla\xi$ is smooth and has compact support inside $(0,T)\times\Omega$,
  it suffices to verify weak convergence of
  \begin{align*}
    \nabla\big[F'(\tilde u_\tau)+\eps \partial_uh(\tilde u_\tau,\tilde v_\tau)\big] \rightharpoonup \nabla\big[F'(u_*)+\eps \partial_uh(u_*,v_*)\big]
  \end{align*}
  in $L^2((0,T)\times\Omega)$.
  But this is clear:
  on the one hand, $F'(\tilde u_\tau)$ and $G'(\tilde v_\tau)$ converge weakly in $L^2(0,T;H^1(\R^d))$, see~\eqref{eq:FuinH1}.
  On the other hand, recalling the $2$-boundedness of $(F,G,h)$,
  the local $L^2$-convergence~\eqref{eq:Fuloc} of $F'(\tilde u_\tau)$ and $G'(\tilde v_\tau)$
  implies convergence in measure of $\partial_\rho\theta_u\big(F'(\tilde u_\tau),G'(\tilde v_\tau)\big)$ and of $\partial_\eta\theta_u\big(F'(\tilde u_\tau),G'(\tilde v_\tau)\big)$.
  By boundedness and continuity of the derivatives of~$\theta_u$,
  the weak convergence of $\nabla F'(\tilde u_\tau)$ and $\nabla G'(\tilde v_\tau)$
  in $[L^2((0,T)\times\R^d)]^d$ is inherited by
  \begin{equation*}
    \nabla \partial_uh(\tilde u_\tau,\tilde v_\tau)
    =  \partial_\rho\theta_u\big(F'(\tilde u_\tau),G'(\tilde v_\tau)\big)\nabla F'(\tilde u_\tau)
    + \partial_\eta\theta_u\big(F'(\tilde u_\tau),G'(\tilde v_\tau)\big)\nabla G'(\tilde v_\tau).  \qedhere
  \end{equation*}
\end{proof}


\section{Convergence to equilibrium}
\label{sct:longtime}
In preparation of the proof of Theorem~\ref{thm:longtime},
we provide an adapted version of the Csiszar--Kullback inequality for~$\lyp$.
\begin{lemma}
  \label{lem:ck}
  There is a constant~$C$, independent of $\eps\in[0,\bar\eps]$,
  such that for all $u,v\in\prbtwo$ with $\lyp_1(u)<\infty$ and $\lyp_2(v)<\infty$, there hold
  \begin{align}
    \label{eq:ck}
    \|u-\bar u_\eps\|_{L^1}^2\le C \lyp_1(u), \quad
    \|v-\bar v_\eps\|_{L^1}^2\le C \lyp_2(v).
  \end{align}
\end{lemma}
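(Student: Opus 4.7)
The strategy is a Csiszár--Kullback style bound that exploits the $\eps$-uniformly compact support of $\bar u_\eps$ and the role of $(\Phi-U_\eps)_+$ in $\lyp_1$ as a tightness mechanism. I would prove only the first inequality in~\eqref{eq:ck}; the second follows by swapping the roles of $F$, $G$, $\Phi$, $\Psi$, and $U_\eps$, $V_\eps$.

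First I would choose $R>0$ independently of $\eps\in[0,\bar\eps]$ so that $\spt{u}\subset\ball_R$ (possible by Proposition~\ref{prp:steady}) and, enlarging $R$ further if necessary, $(\Phi - U_\eps)_+\ge1$ on $\ball_R^c$. This uses the quadratic lower bound in~\eqref{eq:Phiupbound} and the $\eps$-uniform bound on $U_\eps$. Splitting
$$\|u-\bar u_\eps\|_{L^1(\R^d)} = \int_{\ball_R^c} u \dd x + \int_{\ball_R} |u - \bar u_\eps| \dd x,$$
the far-field term is controlled directly: $\int_{\ball_R^c} u \dd x \le \int u(\Phi - U_\eps)_+\dd x \le \lyp_1(u)$, and combining with unit mass $\int_{\ball_R^c} u \dd x\le 1$ yields $\bigl(\int_{\ball_R^c} u\bigr)^2 \le \lyp_1(u)$.

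Second, for the near-field term on $\ball_R$ I would apply Cauchy--Schwarz,
$$\Bigl(\int_{\ball_R} |u - \bar u_\eps| \dd x\Bigr)^2 \le |\ball_R|\int_{\ball_R} (u - \bar u_\eps)^2 \dd x,$$
and reduce to the $L^2$-type bound $\int_{\ball_R}(u-\bar u_\eps)^2\dd x \le C\lyp_1(u)$. I would split the integrand according to $u \ge 2\bar U$ or $u < 2\bar U$: on the former, the at least quadratic growth of $F$ at infinity from~\eqref{hyp:quadratic_growth} together with the degeneracy~\eqref{eq:degeneracy} yields $u^2 \le C(F(u)+1)$, and absorbing the affine part of $d_F$ via Young's inequality $F(\bar u_\eps)+F'(\bar u_\eps)(u-\bar u_\eps)\le \delta F(u)+C_\delta$ gives $u^2 \le C(d_F(u|\bar u_\eps)+1)$; on the latter, both $u$ and $\bar u_\eps$ lie in $[0,2\bar U]$, and Taylor's remainder
$$d_F(u|\bar u_\eps) = (u-\bar u_\eps)^2 \int_0^1(1-t)F''(\bar u_\eps + t(u-\bar u_\eps))\dd t$$
provides, after a weighted Cauchy--Schwarz, the estimate $\int_{\ball_R\cap\{u<2\bar U\}}(u-\bar u_\eps)^2\dd x \le C\int_{\ball_R}d_F\dd x$, modulo an $\eps$-uniform integrability bound on the reciprocal of $F''$ at the intermediate value.

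The main technical obstacle is the last integrability claim when $m>2$: $F''$ vanishes like $r^{m-2}$ at $r=0$ by~\eqref{hyp:powerF}, and $\bar u_\eps\to 0$ at $\partial\spt{u}$, so $1/F''(\bar u_\eps)$ is a priori not integrable near the support boundary. The remedy is Proposition~\ref{prp:regularity}: the $\eps$-uniform Lipschitz regularity of $F'(\bar u_\eps)$ across $\partial\spt{u}$, together with the explicit representation~\eqref{eq:thetrivialone} on $\spt{u}\setminus\overline{\spt{v}}$, forces $F'(\bar u_\eps)$ to vanish linearly at $\partial\spt{u}$, hence $\bar u_\eps(x) \sim \mathrm{dist}(x,\partial\spt{u})^{1/(m-1)}$ uniformly in $\eps$, so that $\int_{\ball_R}\dd x/\bar u_\eps^{m-2}$ converges with an $\eps$-uniform bound. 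This closes the $L^2$-control and hence the Csiszár--Kullback inequality.
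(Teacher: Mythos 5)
There is a genuine gap in your near-field step. You reduce the claim to the $L^2$-type inequality $\int_{\ball_R}(u-\bar u_\eps)^2\dd x\le C\,\lyp_1(u)$, but this intermediate inequality is \emph{false} whenever $m>2$ in \eqref{hyp:powerF}. Indeed, perturb $\bar u_\eps$ by setting $u=\delta$ on a thin shell of width $w$ just outside $\partial\spt{u}$ (where $\bar u_\eps=0$ and $0\le\Phi-U_\eps\le Cw$), compensating the added mass $\approx\delta w$ by lowering $u$ slightly on a fixed interior region where $\bar u_\eps$ is bounded away from zero. Then $\int_{\ball_R}(u-\bar u_\eps)^2\dd x\gtrsim\delta^2w$, while $\lyp_1(u)\lesssim \delta^m w+\delta w^2$ (the Bregman term in the shell is $d_F(\delta|0)=F(\delta)\approx\delta^m$ per unit volume, the potential term is $\le C\delta w$ per unit volume, and the interior correction costs $O(\delta^2w^2)$); choosing $w\sim\delta^{m-1}$ the ratio behaves like $\delta^{2-m}\to\infty$ as $\delta\downarrow0$. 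The failure is visible in your own Taylor-remainder step: on the set $\{u>\bar u_\eps\}$ near or beyond $\partial\spt{u}$ the intermediate value is comparable to $u$, not to $\bar u_\eps$, so the weight to be inverted is $F''$ evaluated at a quantity of size $u$, and no $u$-independent integrability of $u^{-(m-2)}$ is available; the integrability of $\bar u_\eps^{-(m-2)}$ you invoke does not help there (in the shell $\bar u_\eps\equiv0$). For $m=2$ your scheme works, but the hypotheses allow all $m\ge2$.

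The paper's proof avoids exactly this trap: since $u$ and $\bar u_\eps$ have equal mass, $\|u-\bar u_\eps\|_{L^1}=2\int_{\{u<\bar u_\eps\}}(\bar u_\eps-u)\dd x$, so only the set $\{u<\bar u_\eps\}\subset\spt{u}$ needs to be estimated; there $d_F(u|\bar u_\eps)\ge c\,\bar u_\eps^{m-2}(u-\bar u_\eps)^2$ with the \emph{fixed} weight $\bar u_\eps^{m-2}$, and a weighted Cauchy--Schwarz reduces the claim to the $\eps$-uniform finiteness of $\int_{\spt{u}}\bar u_\eps^{-(m-2)}\dd x$, proved via the Euler--Lagrange system \eqref{eq:EL} combined with $\partial_uh(\bar u_\eps,\bar v_\eps)\le AF'(\bar u_\eps)$ (yielding $\bar u_\eps\ge c\,(U_\eps-\Phi)^{1/(m-1)}$ on $\spt{u}$) and the coarea formula. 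Two further points in your write-up would need repair even apart from the main gap: the lower bound $\bar u_\eps\gtrsim\operatorname{dist}(\cdot,\partial\spt{u})^{1/(m-1)}$ does not follow from the Lipschitz regularity of $F'(\bar u_\eps)$ (which only gives an upper bound) nor from \eqref{eq:thetrivialone} on all of $\spt{u}$, since that identity holds only off $\overline{\spt{v}}$ --- the Euler--Lagrange argument just described is what provides it; and on $\{u\ge2\bar U\}$ your bound $u^2\le C(d_F(u|\bar u_\eps)+1)$ introduces an additive constant, which is admissible only after noting that $|\{u\ge2\bar U\}|$ is itself controlled by $\int d_F(u|\bar u_\eps)\dd x$, since otherwise you end with $C(\lyp_1(u)+1)$ rather than the asserted $C\,\lyp_1(u)$, and the latter is what the exponential decay in Theorem~\ref{thm:longtime} requires.
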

\begin{proof}
  It suffices to prove the first inequality in~\eqref{eq:ck}.
  The point of departure is that both~$u$ and~$\bar u_\eps$ have unit mass,
  and therefore
  \begin{align}
    \label{eq:toscani-2}
    \|u-\bar u_\eps\|_{L^1} = 2\int_{\{u<\bar u_\eps\}}(\bar u_\eps-u)\dd x.
  \end{align}
  It is thus sufficient to estimate the integral of $\bar u_\eps-u$ on $\{u<\bar u_\eps\}$, which is a subset of~$\spt{u}$.
  Let~$\bar U$ be an upper bound on~$\bar u_\eps$, uniformly in $\eps\in[0,\bar\eps]$, see Proposition~\ref{prp:steady}.
  By hypothesis~\eqref{hyp:powerF} there is a constant $c_0>0$ such that $F''(r)\ge c_0 r^{m-2}$ for all $r\le\bar U$,
  and thus we have that
  \begin{align*}
    d_F(u|\bar u_\eps) & = \int_u^{\bar u_\eps} (r-u)F''(r)\dd r \\
    & \ge c_0 \int^{\bar u_\eps}_{\frac{u + \bar u_\eps}{2}}(u-r)r^{m-2}\dd r 
      \ge \frac{c_0}{2^{m-2}} \bar u_\eps^{m-2} \int^{\bar u_\eps}_{\frac{u + \bar u_\eps}{2}} (u-r) \dd r
      = \frac{3c_0}{2^{m+1}} \bar u_\eps^{m-2} (u-\bar u_\eps)^2.
  \end{align*}
  This implies by means of the Cauchy-Schwarz inequality that
  \begin{align}
    \int_{\{u<\bar u_\eps\}} (u-\bar u_\eps)\dd x
    &\le \left(\int_{\{u<\bar u_\eps\}}\bar u_\eps^{-(m-2)}\dd x \right)^{1/2}\left(\int_{\{u<\bar u_\eps\}}\bar u_\eps^{m-2}(u-\bar u_\eps)^2 \dd x \right)^{1/2} \nonumber \\
    &\le \sqrt{2^{m+1}/(3c_0)}\left(\int_{\spt{u}}\bar u_\eps^{-(m-2)}\dd x\right)^{1/2}\Bigg(\intrd d_F(u|\bar u_\eps)\dd x\Bigg)^{1/2}. \label{eq:u_u_eps_estimate_1}
  \end{align}
  It remains to be shown that the integral of $\bar u_\eps^{-(m-2)}$ over~$\spt{u}$ is finite.
  For the estimation of the integrand, we obtain thanks to~\eqref{eqn_def_A}
  \begin{align*}
    \partial_uh(\bar u_\eps,\bar v_\eps) = \theta_u\big(F'(\bar u_\eps),G'(\bar v_\eps)\big) \le  AF'(\bar u_\eps),
  \end{align*}
  and therefore the first Euler--Lagrange equation in~\eqref{eq:EL} implies that
  \begin{align*}
    (1+A\bar\eps)F'(\bar u_\eps) \ge (U_\eps-\Phi)_+\,.
  \end{align*}
  Using further that $F'(\bar u_\eps)\le K\bar u_\eps^{m-1}$, again thanks to~\eqref{eq:degeneracy} and~\eqref{hyp:powerF}, we conclude that on~$\spt{u}$,
  \begin{align*}
    \bar u_\eps \ge c (U_\eps-\Phi)^{1/(m-1)} \quad \text{with}\quad c\coloneqq \big(K(1+A\bar\eps)\big)^{-1/(m-1)}.
  \end{align*}
  We can now estimate the integral of $\bar u_\eps^{-(m-2)}$ by means of the coarea formula.
  Two observations:
  first, $|\nabla\sqrt\Phi|\ge\sqrt{2\Lambda/M}$ by~\eqref{eq:Phigradbound}, and second, the diameter of~$\spt{u}$ is bounded uniformly in $\eps\in[0,\bar\eps]$, see Proposition~\ref{prp:steady}. Hence, the $(d-1)$-dimensional Hausdorff measures $\hausdorff^{d-1}$ of the surfaces of the convex sets $\{\Phi<r^2\}$
  are uniformly bounded by some~$S$ for every~$r$ with $r^2 \le  U_\eps$.
  Now we estimate
  \begin{align*}
    \int_{\spt{u}}\bar u_\eps^{-(m-2)}\dd x
    &\le  \frac{M^{1/2}}{(2\Lambda)^{1/2}c^{m-2}}\int_{\{\sqrt\Phi<\sqrt{U_\eps}\}}(U_\eps-\Phi)^{-(m-2)/(m-1)}|\nabla\sqrt \Phi|\dd x \\
    & \le \frac{M^{1/2}}{(2\Lambda)^{1/2}c^{m-2}}\int_0^{\sqrt{U_\eps}} (U_\eps-r^2)^{-(m-2)/(m-1)}\hausdorff^{d-1}\big(\partial\big\{\sqrt \Phi<r\big\}\big)\dd r \\
    &\le \frac{M^{1/2}S}{(2\Lambda)^{1/2}c^{m-2}}\int_0^{\sqrt{U_\eps}} (U_\eps-r^2)^{-(m-2)/(m-1)}\dd r,
  \end{align*}
  and this integral has a finite value since $(m-2)/(m-1)<1$, which is bounded independently of $\eps\in[0,\bar\eps]$. Combining this with~\eqref{eq:u_u_eps_estimate_1} and~\eqref{eq:toscani-2}, we obtain
  \begin{equation*}
   \|u-\bar u_\eps\|_{L^1} \leq C \left(\intrd d_F(u|\bar u_\eps)\dd x\right)^{1/2},
  \end{equation*}
  for some constant~$C$, which is uniform in $\eps\in[0,\bar\eps]$. With the definition of~$\lyp_1(u)$ from~\eqref{eq:lypuv}, the proof of the first claim in~\eqref{eq:ck} is complete.
\end{proof}  
\begin{proof}[Proof of Theorem~\ref{thm:longtime}]
  We apply Proposition~\ref{prp:core} with $(u^*,v^*)=(u_\tau^n,v_\tau^n)$ and $(\hat u,\hat v)=(u_\tau^{n-1},v_\tau^{n-1})$,
  and use~\eqref{eq:mildconvex}.
  This yields
  \begin{align*}
    \lyp(u_\tau^n,v_\tau^n) \le \big(1+2 \Lambda_\eps \tau\big)\lyp(u_\tau^{n-1},v_\tau^{n-1})
  \end{align*}
  with $\Lambda_\eps = \Lambda - K \eps$ and $K\coloneqq K_0+\Lambda K_1$,
  and then, after iteration on $n=1,2,\ldots,N$,
  \begin{align*}
    \lyp(u_\tau^n,v_\tau^n) \le \big(1+2 \Lambda_\eps \tau\big)^{-n}\lyp(u_0,v_0).
  \end{align*}
  Since~$\lyp$ is a convex functional and thus lower semi-continuous with respect to convergence in~$\wass$, it follows in the limit $\tau\downarrow0$ for the limiting curve from Theorem~\ref{thm:transient} that
  \begin{align*}
    \lyp(u_t,v_t) \le \exp\big(-2\Lambda_\eps t\big)\lyp(u_0,v_0).
  \end{align*}
  The $\lyp(u_0,v_0)$ on the right-hand side is easily estimated in terms of $\nrg_\eps(u_0,v_0)$, see~\eqref{eqn:comp_L_E}.
  Thanks to~\eqref{eq:ck}, the left-hand side controls the $L^1$-norms of $u_t-\bar u_\eps$ and $v_t-\bar v_\eps$.
\end{proof}

\appendix

\section{Verification of Example~\ref{xmp:hypos}}
The required properties for the nonlinearities~$F$ and~$G$ are immediately checked.
Concerning the coupling~$h$:
differentiation of $h(u,v) = u^pv^pe^{-\lambda(u+v)}$ yields
\begin{align*}
  \dff h(u,v)
  &= \begin{pmatrix} p/u-\lambda \\ q/v-\lambda \end{pmatrix} h(u,v), \\
  \dff^2 h(u,v)
  &=
  \begin{pmatrix}
    (p/u-\lambda)^2-p/u^2 & (p/u-\lambda)(q/v-\lambda) \\
    (p/u-\lambda)(q/v-\lambda) & (q/v-\lambda)^2-q/v^2
  \end{pmatrix}
                                 h(u,v).
\end{align*}
For the vanishing of~$h$ itself on $\partial\Rnn^2$, it is sufficient that $p>0$ and $q>0$.
Further, $\partial_uh(u,v)=(p/u-\lambda)h(u,v)$ vanishes on $\partial\Rnn^2$
if $h(u,v)/u$ vanishes, i.e., $p>1$ and $q>0$ are sufficient.
Similarly, $q>1$ and $p>0$ is sufficient for the vanishing of $\partial_vh$.

For the convexity hypothesis, we need to show that 
\begin{align*}
  \begin{pmatrix}
    (m-1)u^{m-2} & 0 \\ 0 & (n-1)v^{n-2}
  \end{pmatrix}
                            +\eps^*\dff_{(u,v)}^2h(u,v) \ge0
\end{align*}
for some $\eps^*>0$.
For that, it is sufficient that
\begin{align*}
  2\eps^*\big|(p/u-\lambda)^2-p/u^2\big|h(u,v) &\le (m-1)u^{m-2}, \\
  2\eps^*\big|(q/v-\lambda)^2-q/v^2\big|h(u,v) &\le (n-1)v^{m-2}, \\
  4(\eps^*)^2 (p/u-\lambda)^2(q/v-\lambda)^2 h(u,v)^2 &\le (m-1)(n-1)u^{m-2}v^{n-2}.
\end{align*}
We make use of the following elementary fact:
for arbitrary $a,b\ge0$, there is a constant~$C$ such that
\begin{align}
  \label{eq:elementary}
  x^ay^b \le Ce^{x+y} \quad\text{for all $x,y>0$}.
\end{align}
The first inequality is equivalent to
\begin{align*}
  2\big|(p-\lambda u)^2-p\big|u^{p-m}v^q \le \frac{m-1}{\eps^*}e^{\lambda(u+v)},
\end{align*}
which is true, uniformly in $u,v>0$ for a sufficiently small choice of $\eps^*>0$
provided that $p\ge m$ and $q\ge0$.
In an analogous manner, it follows that the other two inequalities are satisfiable for some $\eps^*>0$ provided that $p\ge 0$, $q\ge n$ and $p \geq m/2$, $q \geq n/2$, respectively.

Next, we notice that
\begin{align*}
  \theta_u(\rho,\eta) &= \big( p\rho^{(p-1)/(m-1)} - \lambda \rho^{p/(m-1)} \big) \eta^{q/(n-1)}\exp\big(-\lambda\big[\rho^{1/(m-1)}+\eta^{1/(n-1)}\big]\big),
  \\
  \theta_v(\rho,\eta) &=  \rho^{p/(m-1)} \big(q \eta^{(q-1)/(n-1)} - \lambda \eta^{q/(n-1)} \big) \exp\big(-\lambda\big[\rho^{1/(m-1)}+\eta^{1/(n-1)}\big]\big).
\end{align*}

To verify the swap condition, observe that
\begin{align*}
  \partial_\eta\theta_u\big(F'(u), G'(v)\big)
  & = \frac{1}{n-1} \big( p u^{p-1} - \lambda u^p\big) \big(qv^{q-(n-1)}-\lambda v^{(q+1)-(n-1)}\big) e^{-\lambda(u+v)}, \\
  \partial_\rho\theta_u\big(F'(u), G'(v)\big) & = \frac{1}{m-1} \big(p u^{p-(m-1)}-\lambda u^{(p+1)-(m-1)}\big) \big( q v^{q-1} - \lambda v^q \big) e^{-\lambda(u+v)}.
\end{align*}
The estimates in~\eqref{hyp:swap} are obviously equivalent to
\begin{align*}
  \frac{1}{n-1}\big(pu^{p-1/2} - \lambda u^{p+1/2}\big) \big(qv^{q+1/2-n}-\lambda v^{q+3/2 - n}\big)  & \le We^{\lambda(u+v)}, \\
  \frac{1}{m-1}\big(pu^{p+1/2-m}-\lambda u^{p+3/2 - m}\big) \big( q v^{q-1/2} - \lambda v^{q+1/2} \big) & \leq We^{\lambda(u+v)},
\end{align*}
which are true, thanks to~\eqref{eq:elementary}, since $p\ge1/2$, $q\ge n-1/2$ and $p\ge m-1/2$, $q\ge 1/2$, respectively.

For the partial derivatives OF $\theta_u,\theta_v$ up to order~$k$, we need to study boundedness on~$\Rnn^2$ and vanishing on~$\partial\Rnn^2$.
From the chain and product rule of differentiation, we first observe
\begin{multline*}
  \partial_\rho^a\partial_\eta^b\big[\rho^\mu\eta^\nu\exp\big(-\lambda\big[\rho^{1-\gamma}+\eta^{1- \delta}\big]\big) \big] \\
  = \left(\sum_{\substack{a', a'', b', b'' \geq 0 \\ a'+a''=a, b'+b''=b}} c_{a'a''b'b''}\rho^{\mu-a'-a''\gamma}\eta^{\nu-b'-b''\delta}\right)\exp\big(-\lambda\big[\rho^{1-\gamma}+\eta^{1- \delta}\big]\big),
\end{multline*}
where the $c_{a'a''b'b''}$ are combinatorial coefficients depending also on the exponents $\mu, \nu, \gamma, \delta$.
According to~\eqref{eq:elementary},
the expression on the right-hand side is globally bounded for $(\rho,\eta)\in\Rnn^2$
if all exponents of~$\rho$ and~$\eta$ appearing in the sum are non-negative and $1-\gamma$, $1-\delta$ are positive.
Furthermore, these expressions vanish on~$\partial\Rnn^2$ if all exponents are positive.
In our case, we have $\gamma=1-1/ (m-1)= (m-2)/(m-1)\in[0,1)$ and likewise $\delta = (n-2)/(n-1)\in[0,1)$.
Therefore, the smallest possibly occurring exponents for~$\rho$ and~$\eta$ are $\mu-a$ and $\nu-b$, respectively.
Plugging in the ``worst case scenarios'' $\mu=(p-1)/(m-1)$ with $a=k$, and $\nu=(q-1)/(n-1)$ with $b=k$,
we see that the condition~\eqref{eq:puremadness} is indeed sufficient.

\section{A lower bound on the entropy}
The following has been obtained e.g.~in~\cite{JKO}; we recall the proof for convenience.
\begin{lemma}
  \label{lem:gero}
  For any $u\in\prbtwo$, any $\beta>0$, and any $\underline x\in\R^d$,
  \begin{align}
    \label{eq:gero}
    \ent^u(u) = \int u\log u \ge 1-\big(\pi/\beta)^{d/2} - \beta \intrd |x-\underline x|^2u\dd x
  \end{align}
  \textup{(}with $z \log z$ interpreted as zero for $z=0$\textup{)}. In particular,~$\ent^u$ is nowhere~$-\infty$ on $\prbtwo$.
\end{lemma}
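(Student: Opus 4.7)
The plan is to compare $u$ pointwise to a Gaussian reference density and exploit the elementary inequality $t\log t \ge t-1$ (equivalently $a\log(a/b) \ge a-b$ for $a\ge0$, $b>0$), which is the pointwise Gibbs/Jensen inequality. For a probability density $u$ and a strictly positive probability density $g$, this gives the relative entropy bound
\[
\int u\log u \,\dd x \;\ge\; \int u\log g \,\dd x + \int (u-g)\,\dd x \;=\; \int u\log g\,\dd x,
\]
since both $u$ and $g$ have total mass one. (On the null set where $u=0$, the convention $0\log 0 =0$ makes the pointwise inequality trivial.)

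Next I would plug in the explicit Gaussian $g(x) := (\beta/\pi)^{d/2}\exp(-\beta|x-\underline x|^2)$, which is indeed a probability density of the required form. Since
\[
\log g(x) \;=\; \tfrac{d}{2}\log(\beta/\pi) - \beta|x-\underline x|^2,
\]
the previous inequality becomes
\[
\ent(u) \;\ge\; \tfrac{d}{2}\log(\beta/\pi) - \beta \intrd |x-\underline x|^2 u\,\dd x.
\]

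Finally, to reach the precise form stated in~\eqref{eq:gero}, I would apply the elementary one-variable estimate $\log s \ge 1 - 1/s$ (valid for all $s>0$) with $s=(\beta/\pi)^{d/2}$ to replace the logarithmic term by $1-(\pi/\beta)^{d/2}$, yielding
\[
\ent(u) \;\ge\; 1 - (\pi/\beta)^{d/2} - \beta\intrd |x-\underline x|^2 u\,\dd x,
\]
which is~\eqref{eq:gero}. The concluding statement that $\ent$ is never $-\infty$ on $\prbtwo$ is then immediate, since every $u\in\prbtwo$ has finite second moment, so the right-hand side is a finite real number (for any choice of $\beta>0$ and $\underline x\in\R^d$).

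I do not expect a serious obstacle here: the proof reduces to two elementary convexity inequalities for the scalar function $t\mapsto t\log t$ and for $\log$. The only mild care needed is the justification that the Gibbs step survives integration even though $u\log u$ can take negative values — but this is taken care of automatically by writing $u\log u \ge u\log g + (u-g)$ pointwise and noting that both sides of the resulting pointwise bound are integrable (the right-hand side has $L^1$ majorant $|u\log g| \le C\,u + C\,|x-\underline x|^2 u$, which is integrable since $u\in\prbtwo$).
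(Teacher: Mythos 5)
Your proof is correct and essentially coincides with the paper's argument: the paper applies the Fenchel--Young inequality $zv\le z\log z-z+e^v$ with $v=-\beta|x-\underline x|^2$, which is exactly your pointwise bound $u\log u\ge u\log g+(u-g)$ taken with the \emph{unnormalized} Gaussian $e^{-\beta|x-\underline x|^2}$ in place of the normalized one, so the constant $1-(\pi/\beta)^{d/2}$ comes out directly without your extra step $\log s\ge 1-1/s$. Your integrability justification (a pointwise $L^1$ minorant controls the negative part of $u\log u$ since $u\in\prbtwo$) is sound, and your intermediate bound $\tfrac d2\log(\beta/\pi)-\beta\intrd|x-\underline x|^2u\dd x$ is in fact slightly sharper than \eqref{eq:gero}.
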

\begin{proof}
  By Legendre duality, $zv\le z\log z -z + e^v$ for all $z\ge0$ and $v\in\R$. With the choices $z\coloneqq u(x)$ and $v\coloneqq -\beta|x-\underline x|^2$ this gives
  \begin{align*}
    - \beta\intrd |x-\underline x|^2u\dd x \le \intrd u\log u\dd x - \intrd u\dd x + \intrd e^{-\beta|x-\underline x|^2}\dd x,
  \end{align*}
  which is just~\eqref{eq:gero}.
\end{proof}

\section*{Acknowledgements}
The authors thank Filippo Santambrogio for significant help on the rigorous justification of the Euler--Lagrange equations.

\end{document}